\newcommand{\vertiii}[1]{{\left\vert\kern-0.25ex\left\vert\kern-0.25ex\left\vert #1 
    \right\vert\kern-0.25ex\right\vert\kern-0.25ex\right\vert}}
\newtheorem*{theorem*}{Main Theorem}
\newtheorem{theorem}{Theorem}
\newtheorem{lemma}{Lemma}
\newtheorem{prop}[theorem]{Proposition}
\newtheorem{coro}[theorem]{Corollary}
\newtheorem*{fact}{Fact}
\newtheorem{rem}[theorem]{Remark}
\theoremstyle{definition}
\newtheorem*{ques}{Question}
\newcommand{\gr}{\mathop{\mathrm{gr}}}
\newcommand{\mdim}{\mathop{\mathrm{mdim}}}
\newcommand{\cv}{\mathop{\mathrm{cv}}}
\newcommand{\ex}{\mathop{\mathrm{ex}}}
\newcommand{\diam}{\mathop{\mathrm{diam}}}
\newcommand{\Int}{\mathop{\mathrm{Int}}}
\theoremstyle{remark}
\numberwithin{equation}{section}
\begin{document}

\title{Rescaled Entropy of   cellular automata}

\author{David Burguet}
\address{Sorbonne Universite, LPSM, 75005 Paris, France}
  \email{david.burguet@upmc.fr}  
\subjclass[2010]{ 37B15, 37A35, 52C07}

\date{June 2018}


\keywords{}

\begin{abstract}For a $d$-dimensional cellular automaton with $d\geq 1$ we introduce a rescaled entropy which estimates the growth rate of the entropy at small scales by generalizing previous approaches \cite{Bla,Lak}. We also define a notion of Lyapunov exponent and proves a Ruelle inequality as already established for $d=1$ in \cite{Ti,Sh}. 
Finally we generalize the entropy formula for $1$-dimensional permutative cellular automata \cite{Wa} to the rescaled entropy in higher dimensions. This last result extends recent works \cite{Tsu} of Shinoda and Tsukamoto dealing with the metric mean dimensions of  two-dimensional symbolic dynamics. \end{abstract}

\maketitle
\section{Introduction}
In this paper we estimate the dynamical complexity of multidimensional cellular automata. In the following the main results will be stated in a more general setting, but let us focus  in this introduction on the following algebraic cellular automaton on $\left(\mathbb F_p\right)^{\mathbb Z^d}$ with $p$ prime  given for some finite family $(a_i)_{i\in I}$ in $\mathbb F_p^*$ by 
$$\forall (x_j)_j\in \left(\mathbb F_p\right)^{\mathbb Z^d}, \  f((x_j)_j)=\left(\sum_{i\in I}a_ix_{i+j}\right)_j.$$

Let $I'=I\cup\{0\}$. For $d=1$ the topological entropy of $f$ is finite and equal to $\diam (I')\log p$ where $\diam(I')$ denotes  the diameter of $I'$ for the usual distance on $\mathbb R$ \cite{Wa}. However in higher dimensions the topological entropy of $f$ is always infinite unless $f$ is the identity map \cite{MW, Lak1}. Moreover the topological entropy of the $\mathbb Z^{d+1}$-action given by $f$ and the shift vanishes. In this paper we investigate the growth rate of 
$\left(h_{top}(f, \mathsf P_{J_n})\right)_n$  for nondecreasing sequences $(J_n)$ of convex subsets of $\mathbb R^d$ where $ \left(\mathsf P_{J_n}\right)_n$ denotes the clopen partitions into $\underline{J_n}$-coordinates with $\underline{J_n}:=J_n\cap \mathbb Z^d$. This sequence appears to increase   as  the perimeter $p(J_n)$ of $J_n$. We define the rescaled entropy $h_{top}^d(f)$ of $f$ as $\limsup_{J_n}\frac{h_{top}(f, \mathsf P_{J_n})}{p(J_n)}$.   In \cite{Lak}  another renormalization is used, whereas in \cite{Bla} the authors only investigate the case of squares $J_n=[-n,n]^2, \ n\in \mathbb N$. For $d=1$ we get $h_{top}^1(f)=\frac{h_{top}(f)}{2}$.  We generalize the entropy formula  for algebraic cellular automata as follows :
\begin{theorem}\label{pm}
Let $f$ be an algebraic cellular automaton on $\left(\mathbb F_p\right)^{\mathbb Z^d}$  as above, then 
$$h_{top}^d(f)=R_{I'}\log p,$$
where $R_{I'}$ denotes the radius of the smallest bounding sphere containing $I'$. 
\end{theorem}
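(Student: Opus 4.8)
The plan is to reduce the statement to convex geometry in two moves: a linear-algebra computation of $h_{top}(f,\mathsf P_J)$ for each fixed convex $J$, followed by an extremal problem over the shape of $J$. For the first move I would exploit that the compact group $(\mathbb F_p)^{\mathbb Z^d}$ has (continuous) dual the Laurent ring $R=\mathbb F_p[X_1^{\pm1},\dots,X_d^{\pm1}]$, on which $f$ transposes to multiplication by the fixed element $F=\sum_{i\in I}a_iX^i$ (up to the reflection $X\mapsto X^{-1}$, which is irrelevant below), and that $\mathsf P_J$ is dual to $R_J:=\bigoplus_{j\in\underline J}\mathbb F_pX^j$. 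Since $f$ is a group endomorphism, the atoms of $\bigvee_{k=0}^{m-1}f^{-k}\mathsf P_J$ are the cosets of a subgroup of index $p^{e_m}$ with $e_m=\dim_{\mathbb F_p}V_m$ and $V_m:=R_J+FR_J+\dots+F^{m-1}R_J$; hence $h_{top}(f,\mathsf P_J)=\log p\cdot\lim_m\frac1m\dim_{\mathbb F_p}V_m$.

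Next I would identify the per-scale constant. Because $F$ is a non-zero-divisor in $R$, one has $\dim V_{m+1}-\dim V_m=\#\underline J-\dim_{\mathbb F_p}\bigl(R_J\cap\sum_{j=1}^{m}F^{-j}R_J\bigr)$, which is non-increasing in $m$ and hence eventually constant; so $\lim_m\frac1m\dim V_m=c(J):=\#\underline J-\dim_{\mathbb F_p}(R_J\cap\mathcal F^-)$ with $\mathcal F^-:=\sum_{j\ge1}F^{-j}R_J\subseteq R[F^{-1}]$. I would then establish
\[
R_J\cap\mathcal F^-=\mathrm{span}_{\mathbb F_p}\{\,X^z:\ z+\mathrm{conv}(I')\subseteq J\,\}.
\]
The inclusion $\supseteq$ is immediate, since $z+\mathrm{conv}(I')\subseteq J$ forces $X^zF\in R_J$, so $X^z\in F^{-1}R_J$. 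For $\subseteq$ I would argue with Newton polytopes: if $h\in R_J$ satisfies $F^Mh\in V_M$, then for each direction $u\in S^{d-1}$ at which $\max_{i\in I'}\langle i,u\rangle$ is attained at a single point and that point lies in $I$ (a \emph{permutative} direction, and such $u$ are dense), the hypothesis $a_i\ne0$ for all $i\in I$ rules out any cancellation of the $\langle\cdot,u\rangle$-maximal monomial of $F^Mh$, whence $\max_{\mathrm{supp}(F^Mh)}\langle\cdot,u\rangle=\max_{\mathrm{supp}(h)}\langle\cdot,u\rangle+M\,h_{\mathrm{conv}(I')}(u)$, while $\max_{\mathrm{supp}(V_M)}\langle\cdot,u\rangle\le h_J(u)+(M-1)h_{\mathrm{conv}(I')}(u)$; comparing these yields $\langle z,u\rangle+h_{\mathrm{conv}(I')}(u)\le h_J(u)$ for every $z\in\mathrm{supp}(h)$, first for generic $u$ and then for all $u$ by continuity (the leftover ``$0$-corner'' directions, where the maximum is realised at $0$, impose nothing because $0\in I'$). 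This is exactly $\mathrm{supp}(h)+\mathrm{conv}(I')\subseteq J$. Consequently
\[
h_{top}(f,\mathsf P_J)=c(J)\log p,\qquad c(J)=\#(J\cap\mathbb Z^d)-\#\{\,z\in\mathbb Z^d:\ z+\mathrm{conv}(I')\subseteq J\,\}.
\]

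Finally the extremal problem. Writing $K=\mathrm{conv}(I')$ (so $0\in K$ and $J\ominus K:=\{z:z+K\subseteq J\}\subseteq J$), a lattice-point estimate gives $c(J)=\int_{S^{d-1}}h_K(u)\,dS_J(u)+o(p(J))$ as $J$ grows, the leading term being the first variation of volume attached to the inner-parallel deformation $J\mapsto J\ominus K$ (here $S_J$ is the surface-area measure and $p(J)=S_J(S^{d-1})$). Hence $h_{top}^d(f)/\log p$ is the supremum over nondecreasing sequences of convex bodies of $\int h_K\,dS_{J_n}/p(J_n)$, and by homogeneity together with Minkowski's existence theorem this equals $\sup\{\int_{S^{d-1}}h_K\,d\mu:\ \mu\text{ a balanced probability measure on }S^{d-1}\}$. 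If $B(c,R_{I'})$ is the smallest ball containing $K$ then $h_K(u)\le\langle c,u\rangle+R_{I'}$, so $\int h_K\,d\mu\le R_{I'}$ for every balanced $\mu$, while a $\mu$ (essentially) supported on the unit directions from $c$ to the contact points of $K$ with $\partial B(c,R_{I'})$ approaches the value $R_{I'}$; this gives $h_{top}^d(f)=R_{I'}\log p$.

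The heart of the matter is the ``$\subseteq$'' inclusion above — this is the only place algebraicity ($a_i\ne0$) is used, via non-cancellation of leading monomials along permutative directions, and it needs care for non-generic directions and for the case where $0$ is a vertex of $\mathrm{conv}(I')$. A secondary technical point is arranging that the lattice-point-to-volume passage in the last step costs only $o(p(J))$ along the sequences actually used (routine for smooth bodies and for lattice polytopes, but to be stated with care).
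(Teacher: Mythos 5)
Your first two moves are a genuinely different (and attractive) route from the paper's. The paper never computes $h_{top}(f,\mathsf P_J)$ exactly: it proves the upper bound $h_{top}(f,\mathsf P_J)\leq \sharp\underline{\partial^-_{\mathbb I}J}\,\log\sharp\mathcal A$ via the identity $h_*(f,\mathsf P_J)=h_*(f,\mathsf P_{\partial^-J})$ (Lemma \ref{entropie}) and the lower bound $h_{top}(f,\mathsf P_J)\geq \sharp\underline{\partial^{\bot}_{\mathbb I}J}\,\log\sharp\mathcal A$ by a combinatorial permutativity argument (Lemma \ref{zer}), the two being reconciled only after optimizing the shape. Your Pontryagin-duality computation, $h_{top}(f,\mathsf P_J)=\bigl(\sharp\underline J-\dim_{\mathbb F_p}(R_J\cap\mathcal F^-)\bigr)\log p$, together with the Newton-polytope identification of $R_J\cap\mathcal F^-$, yields the exact equality $h_{top}(f,\mathsf P_J)=\sharp\underline{\partial^-_{\mathbb I}J}\,\log p$ for every convex $J$; the non-cancellation argument along generic directions is sound (it is the algebraic avatar of the paper's permutativity at the nonzero extreme points of $\mathbb I$), and the treatment of the directions where the maximum over $I'$ is attained at $0$ is correct. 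This is specific to the algebraic case, whereas the paper's Proposition \ref{form} covers all permutative CA, but for Theorem \ref{pm} it is a clean shortcut. Your extremal step ($h_K\leq\langle c,\cdot\rangle+R_{I'}$ against balanced measures, contact directions containing $0$ in their convex hull, Minkowski existence) is in substance the paper's Proposition \ref{supe}, including the degenerate case which forces the ``essentially supported'' approximation (the paper's $T'_R$, $R\to\infty$).

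The genuine gap is the step you call secondary: the claim $c(J)=\int_{\mathbb S^{d}}h_K\,d\sigma_J+o\bigl(p(J)\bigr)$ for growing convex bodies. For the lower bound you only need it along chosen sequences, and there it can be arranged; but the supremum defining $h^d_{top}(f)$ runs over \emph{all} convex exhaustions, so your upper bound needs $\limsup_n\sharp\underline{\partial^-_{\mathbb I}J_n}/p(J_n)\leq R_{I'}$ for arbitrary exhaustions, where the bodies are neither smooth nor lattice polytopes. The naive comparison of lattice counts with volumes (the paper's inequalities (\ref{fac}) and (\ref{bol})) only gives an additive error of order $p(J_n)$ with a dimensional constant --- exactly Lemma \ref{grg} --- i.e.\ an error of the same order as the main term; the sharp asymptotics is proved in the paper only for $d=2$ (Proposition \ref{fdf}), via a nontrivial argument with integral rectangles, and is explicitly not used for the theorem. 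The paper instead removes the constant by amplification: $h^d_{*}(f^k,O)=k\,h^d_{*}(f,O)$ (Lemma \ref{zdz}) together with $\mathbb I_k= k\,\mathbb I$ (Lemma \ref{pwr}) and $V_{k\mathbb I}=kV_{\mathbb I}$, then $k\to\infty$. Your framework admits the same fix --- your exact formula applies to $f^k$, whose symbol $F^k$ still has nonzero coefficients $a_v^k$ at the points $kv$, $v\in\ex(\mathbb I)$, so $\mathrm{conv}(I'_k)=k\,\mathrm{conv}(I')$ despite cancellations in characteristic $p$ --- but you must then also prove the power rule for the rescaled entropy, which your sketch does not mention. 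As written, the passage ``lattice count $=$ quermassintegral $+o(p(J))$ for all exhaustions in every dimension'' is an unproved (and not routine) statement, so the upper bound $h^d_{top}(f)\leq R_{I'}\log p$ is not yet established by your argument for $d\geq 3$.
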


In fact we establish such a formula for any permutative cellular automaton (see Section \ref{las}).  In \cite{Tsu} the authors compute, inter alia,  the metric mean dimension of the horizontal shift in $\mathbb Z^2$ for some standard distances. These dimensions may be interpreted as the rescaled entropy with respect to some particular sequence of convex sets $(J_n)_n$. In particular we extend these results  in higher dimensions for general permutative cellular automata.

We also consider a measure theoretical analogous quantity of the rescaled entropy. In dimension one, a notion of Lyapunov exponent has been defined in \cite{Sh}. Then 
Tisseur \cite{Ti}  proved in this case a Ruelle inequality  relating this exponent with the Kolmogorov-Sinai entropy. In this paper  we also introduce a notion of Lyapunov exponent in higher dimensions, which bounds from above the rescaled entropy of measures.

The paper is organized as follows. In Section 2 we state some  measure geometrical properties of convex sets in $\mathbb R^d$. We recall the dynamical background of cellular automata in Section \ref{deff} and we introduce then a Lyapunov exponent for multidimensional cellular automata. In Section \ref{entr} we define and study the topological and measure theoretical rescaled entropy. We prove the Ruelle type inequality in Section 6. The last section is devoted to the proof of the entropy formula for  permutative cellular 
automata.

\section{Background on convex geometry}\label{conv}

\subsection{Convex bodies, domains and polytopes}

For a fixed positive integer $d$ we  endow the vector space  $\mathbb R^d$ with its usual Euclidean structure.  The associated scalar product is simply denoted by $\cdot$ and we let $\mathbb S^d$ be the unit sphere.  For a subset $F$ of $\mathbb R^d$ we let  $\overline F$, $\Int(F)$ and $ \partial F$ be respectively its closure,  interior set  and  boundary. We denote by $\underline{F}$ the set of integer points in $F$, i.e. $\underline{F}=F\cap \mathbb Z^d$.   We also denote by $V(F)$  the $d$-Lebesgue measure of $F$ (also called the volume of $F$) when the set $F$ is Borel.  

The extremal set of a convex set $J$ is denoted by $\ex(J)$ and the convex hull of $F\subset \mathbb R^d$ by $\cv(F)$. A convex body is a compact convex set of $\mathbb R^d$.  A convex body containing the origin $0\in \mathbb R^d$ in its interior set  is said to be  a \textbf{convex domain}. The set of convex bodies  endowed with the Hausdorff topology is a locally compact metrizable space. In the following we denote by $\mathcal D$, resp. $\mathcal D^1$, the set of convex domains, resp. with unit perimeter, endowed with the Hausdorff topology.    A \textbf{convex  polytope} (resp. $k$-polytope with $k\leq d$) in $\mathbb R^d$ is a convex body given by the convex hull of a finite set (resp. with topological dimension equal to $k$). When this  finite set lies inside the lattice $\mathbb Z^d$, the convex polytope is said \textbf{integral}. We let $\mathcal F(P)$ be  the set of faces of a convex polytope $P$. For a convex body $J$ we denote by $\mathsf J$  the integral polytope given by the convex hull of integer points in $J$, i.e.  $\mathsf J= \cv(\underline{J})$.

 A  convex domain $J$  has Lipshitz boundary and  finite perimeter $p(J)$. For convex domains the perimeter in the distributional sense of   De Giorgi coincides with the $(d-1)$-Hausdorff measure $\mathcal{H} _{d-1}$ of the boundary. For $J\in \mathcal D$ we let $\partial ' J$ be the subset of points $x\in \partial J$, where the tangent space $T_xJ$ is well defined. The set  $\partial 'J$  has full $\mathcal{H} _{d-1}$-measure  in $\partial J$.  We let $N^J(x)\in \mathbb S^d$ be the unit $J$-external normal vector at  $x\in \partial'J$. For any $x\in \partial 'J$ we let $T_x^+J$ (resp. $T_x^-J$) be the open external (resp. closed internal) semi-space with boundary $T_xJ$. With these notations we have 
$J=\bigcap_{x\in \partial 'J}T_x^-J$. For $\epsilon\in \mathbb R$ we denote by  $T_x^{\pm}J(\epsilon)$ the semi-planes
$T_x^\pm J(\epsilon)= T_x^\pm J+ \epsilon N^J(x)$. When $J$ is a convex polytope and $F\in \mathcal F (J)$, we write  $T_F$  to denote the tangent affine space supporting $F$,  $T_F^{\pm}$ for the associated   semi-spaces and $N^F$ for  the unit external normal to $F$.  

The \textbf{support function} of a convex body $I$ is the real  continuous function $h_I$ on $\mathbb S^d$ : 
$$\forall x\in \mathbb S^d, \ h_I(x)=\max_{u\in I}u\cdot x.$$ 
The support function completely characterizes the convex body $I$.  The \textbf{area measure} $\sigma_J$ of a convex domain $J$ is the Borel measure on $\mathbb S^d$  given by $N^J_*\mathcal{H} _{d-1}$ :
$$\forall B \text{ Borel of }\mathbb S^d, \ \sigma_J(B)=\mathcal{H} _{d-1}\left((N^J)^{-1}B\right).$$ 
If a sequence $(J_n)_n$ in $\mathcal D$ is converging to $J_\infty\in \mathcal D$ (for the Hausdorff topology), then $\sigma_{J_n}$ is converging weakly to $\sigma_{J_\infty}$, in particular the perimeter of $J_n$ goes to the perimeter of $J_\infty$ (see Proposition 10.2 in \cite{Gru}).

\subsection{Convex exhaustions}

 We consider    sequences $\mathcal J=(J_n)_{n\in \mathbb N}$ of convex domains    with $p( J_n)\xrightarrow{n} +\infty$, such that the sets 
$\tilde{J_n}=p\left( J_n\right)^{-\frac{1}{d-1}}J_n \in \mathcal D^1$ are  converging  to a limit  $J_\infty\in \mathcal D$ in the Hausdorff topology.  In particular $\bigcup_nJ_n=\mathbb R^d$.  Moreover  the limit $ J_\infty$ has unit perimeter. The sequences  $\mathcal J= (J_n)_n$  satisfying the  above properties are said to be  \textbf{convex exhaustions}. 
For $O\in \mathcal D^1$    we denote by  $\mathcal{E}(O)$  the set of convex exhaustions $\mathcal J=(J_n)_n$ with $J_\infty=O$. Moreover for $O\in \mathcal D$ we let  $\mathcal J_O\in \mathcal E\left(p(O)^{-\frac{1}{d-1}}O\right)$ be the convex exhaustion given by $\mathcal J_O:=\left(nO\right)_n$. A convex exhaustion $(J_n)_n$ is said integral when $J_n$ is an integral polytope for all $n$.

The inner radius $r(E)$ of a subset $E$ of $\mathbb R^d$ is the largest $a\geq 0$ such that $E$ contains a Euclidean ball of radius $a$. 
For two subsets $E$ and $F$ of $\mathbb R^d$ we denote $E\Delta F $ the symmetric difference of $E$ and $F$ given by $E\Delta F :=\left(E\setminus F\right)\cup \left(F\setminus E\right)$.

\begin{lemma}\label{nul}
Let $O\in \mathcal D$ and  $\mathcal J =(J_n)_n\in \mathcal E(O)$. Then any sequence of convex bodies $\mathcal K=(K_n)_n$ with $r\left(K_n\Delta J_n\right)=o\left(p(J_n) \right)$ belongs to $\mathcal E(O)$ and $p(K_n)\sim^n p(J_n)$. 
\end{lemma}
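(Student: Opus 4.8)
The plan is to reduce the entire statement to a single Hausdorff estimate at the natural scale. Set $\lambda_n:=p(J_n)^{-1/(d-1)}$, so that $\tilde J_n=\lambda_nJ_n\in\mathcal D^1$ and $\tilde J_n\to O$ for the Hausdorff topology, and put $\varepsilon_n:=r(K_n\Delta J_n)$, which by hypothesis is negligible with respect to the normalising scale $\lambda_n^{-1}$ of $J_n$. It will suffice to establish
\[
d_H(K_n,J_n)=o(\lambda_n^{-1}),\qquad\text{equivalently}\qquad d_H(\lambda_nK_n,\lambda_nJ_n)\xrightarrow{n}0 .
\]
Granting this: since $\lambda_nJ_n\to O$ we get $\lambda_nK_n\to O$; as $0\in\Int(O)$, Hausdorff convergence forces $0\in\Int(\lambda_nK_n)$ for $n$ large, so $K_n$ is eventually a convex domain; the continuity of the perimeter along Hausdorff-convergent sequences of convex domains (Proposition~10.2 in \cite{Gru}) gives $\lambda_n^{d-1}p(K_n)=p(\lambda_nK_n)\to p(O)=1$, that is $p(K_n)\sim^n p(J_n)$ (in particular $p(K_n)\to+\infty$); and finally $\tilde K_n=\bigl(p(K_n)/p(J_n)\bigr)^{-1/(d-1)}\lambda_nK_n\to O$, so $\mathcal K\in\mathcal E(O)$.

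The engine for the estimate is an elementary ``ice-cream cone'' inequality. If $A,B$ are convex bodies, $B(w,\rho)\subseteq A$, and some $x\in A$ satisfies $d(x,B)\ge\delta$, then the cone $\cv(\{x\}\cup B(w,\rho))=\bigcup_{s\in[0,1]}\bigl((1-s)x+sB(w,\rho)\bigr)$ is contained in $A$, and for a suitable $s\asymp\min(1,\delta/\diam A)$ the ball $(1-s)x+sB(w,\rho)=B(y_s,s\rho)$ is disjoint from $B$, whence
\[
r(A\Delta B)\ \ge\ r(A\setminus B)\ \gtrsim\ \rho\,\min\bigl(1,\ \delta/\diam A\bigr).
\]
Using this with $(A,B)=(K_n,J_n)$ and with $(A,B)=(J_n,K_n)$: for a point of $A$ at distance $\delta\le\diam A$ from $B$ one gets $\delta\lesssim(\diam A/\rho)\,\varepsilon_n$, and the case $\delta>\diam A$ is impossible for $n$ large since it would force $\varepsilon_n\gtrsim\rho$. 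Hence
\[
d_H(K_n,J_n)\ \lesssim\ \max\!\left(\frac{\diam K_n}{r(K_n)},\ \frac{\diam J_n}{r(J_n)}\right)\varepsilon_n .
\]
Now $\diam J_n/r(J_n)=\diam\tilde J_n/r(\tilde J_n)\to\diam O/r(O)<\infty$, so the second ratio is bounded; the estimate will follow once $r(K_n)$ is bounded below and $\diam K_n$ bounded above, each at the scale $\lambda_n^{-1}$.

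For $r(K_n)\gtrsim\lambda_n^{-1}$: fix $w_n$ with $B(w_n,R_n)\subseteq J_n$, $R_n:=r(J_n)$. Were some $z\in B(w_n,R_n-2\varepsilon_n)$ not in $K_n$, strict separation of $z$ from the convex body $B(w_n,R_n)\cap K_n$ would yield a half-space $H\ni z$ with $B(w_n,R_n)\cap H\subseteq B(w_n,R_n)\setminus K_n\subseteq K_n\Delta J_n$; but a spherical cap $B(w_n,R_n)\cap H$ of inner radius $\le\varepsilon_n$ is contained in the shell $\{y:\ |y-w_n|\ge R_n-2\varepsilon_n\}$, contradicting $|z-w_n|<R_n-2\varepsilon_n$. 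Hence $B(w_n,R_n-2\varepsilon_n)\subseteq K_n$ and $r(K_n)\ge r(J_n)-2\varepsilon_n=(1-o(1))r(J_n)\gtrsim\lambda_n^{-1}$. For $\diam K_n\lesssim\lambda_n^{-1}$: assume $\diam K_n\ge 4\diam J_n$ for infinitely many $n$ and pick $a_n,b_n\in K_n$ with $|a_n-b_n|=\diam K_n$; from $|a_n-b_n|\le d(a_n,J_n)+\diam J_n+d(b_n,J_n)$, one of them, say $a_n$, has $d(a_n,J_n)\ge\tfrac12(\diam K_n-\diam J_n)\ge\tfrac14\diam K_n$, so the cone inequality with $(A,B)=(K_n,J_n)$, $x=a_n$, $\rho=r(K_n)$ gives $\varepsilon_n\gtrsim r(K_n)\gtrsim\lambda_n^{-1}$, contradicting $\varepsilon_n=o(\lambda_n^{-1})$. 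With both bounds, the right-hand side of the last display is $O(\varepsilon_n)=o(\lambda_n^{-1})$, which is the required estimate, and the lemma follows.

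The step I expect to be the main obstacle is precisely this pair of a priori bounds: one must show that the smallness of $r(K_n\Delta J_n)$ forces $K_n$ to be non-degenerate and of the same order of magnitude as $J_n$. The convexity of $K_n$ enters essentially there — in the separation-and-cap argument for the inradius and in the sliding-cone argument for the diameter — whereas everything else (the cone geometry, the continuity of the perimeter, and the bookkeeping with $\lambda_n$) is routine.
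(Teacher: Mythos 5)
Your argument is correct, but it takes a genuinely different route from the paper's. The paper proceeds softly: it rescales by $p(J_n)^{-1/(d-1)}$, intersects the rescaled $K_n$ with a fixed ball $B\supset J_\infty$ so as to stay in a compact family, extracts a Hausdorff-convergent subsequence, and shows that any limit $K_\infty\neq J_\infty$ would force balls of a fixed rescaled radius $s$ (hence of radius comparable to the linear scale of $J_n$) inside $K_n\Delta J_n$, contradicting the hypothesis; convexity of $K_n$ enters only to rule out mass escaping $B$, and the perimeter equivalence then comes from continuity of the perimeter under Hausdorff convergence, exactly as in your concluding step. You instead avoid any subsequence extraction and prove the quantitative bound $d_H(K_n,J_n)\lesssim \max\bigl(\diam K_n/r(K_n),\,\diam J_n/r(J_n)\bigr)\, r(K_n\Delta J_n)$ via the cone construction, after two a priori bounds: $r(K_n)\ge r(J_n)-2\,r(K_n\Delta J_n)$ (the separation-and-cap argument, which is sound, including the case where the separating half-space contains more than half of the inball) and $\diam K_n\le 4\diam J_n$ eventually. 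This is more work than the paper's compactness argument, but it buys an explicit rate $d_H(K_n,J_n)=O\bigl(r(K_n\Delta J_n)\bigr)$; both proofs must handle a possible far-away bulk of $K_n$, you through the diameter bound, the paper through the ball-intersection trick. One point to flag, though I do not count it against you since the paper does the same: you read the hypothesis as smallness of $r(K_n\Delta J_n)$ relative to the linear scale $p(J_n)^{1/(d-1)}$, while the statement literally says $o\bigl(p(J_n)\bigr)$; for $d=2$ these coincide, but for $d\ge 3$ the literal hypothesis is strictly weaker (homothetic balls $J_n=nB$, $K_n=(n+n^{3/2})B$ then violate $p(K_n)\sim p(J_n)$), and the paper's own proof, like yours, really establishes the linear-scale version — the balls it exhibits in $K_n\Delta J_n$ have radius of order $p(J_n)^{1/(d-1)}$ after undoing the rescaling, despite being written as $p(J_n)B(x,s)$.
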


\begin{proof}
We claim that $p\left( J_n\right)^{-\frac{1}{d-1}}K_n$ is converging to $J_\infty$ in the Hausdorff topology. Then by taking the perimeter in this limit we get $\lim_n\frac{p(K_n)}{p(J_n)}=p(J_\infty)=1$ and therefore $\tilde{K_n}=p\left( K_n\right)^{-\frac{1}{d-1}}K_n$ also goes to $J_\infty=O$.
Let us prove now the claim. Fix a Euclidean ball $B$ with $J_\infty \subset \Int B$. It is enough to show that $p\left( J_n\right)^{-\frac{1}{d-1}}K_n\cap B$ is converging to $J_\infty$. Indeed as $K_n$ is convex, this will imply that $p\left( J_n\right)^{-\frac{1}{d-1}}K_n\subset B$ lies in $B$ for $n$ large enough (if not $p\left( J_n\right)^{-\frac{1}{d-1}}K_n\cap \partial B$ is non empty for infinitely many $n$ and therefore we should have $J_\infty\cap \partial B\neq \emptyset$). By extracting a subsequence we may assume  $p\left( J_n\right)^{-\frac{1}{d-1}}K_n\cap B$ is converging to a convex body  $K_\infty$ and we need to prove $K_\infty=J_\infty$. We argue by contradiction. As $J_\infty$ is a convex domain, we have either $\Int(J_\infty)\setminus K_\infty\neq \emptyset $ or $\Int(K_\infty)\setminus J_\infty\neq\emptyset$. But  for $x$ in one of these sets, there is $s>0$  such that the balls $p(J_n)B(x,s)$ are contained in $K_n\Delta J_n$, therefore $r\left(K_n\Delta J_n\right)\geq sp(J_n)$,  for $n$ large enough.
\end{proof}

\begin{rem}\label{integr}
If $\sharp \underline{K_n\Delta J_n}= o\left( p(J_n)^{\frac{d}{d-1}} \right)$ then the condition on the inner radius in Lemma \ref{nul} holds and therefore $\mathcal K$ belongs to  $\mathcal E(O)$.  In particular $(\mathsf J_n)_n$ is a convex exhaustion in $\mathcal E(O)$. 
\end{rem}

\subsection{Internal and external morphological boundary}

We recall some terminology of mathematical morphology used in image processing. For two subsets $I$ and $J$ of $\mathbb R^d$, the \textbf{dilation} (also known as the Minkowski sum) $J\oplus I $ and the \textbf{erosion} $J\ominus I$ of $J$ by $I$ are defined as follows
\begin{align*}
J\oplus I&=\{i+j \ | \ i\in I \text{ and  }j\in J\},\\
J\ominus I&=\{j\in \mathbb R^d \ | \ \forall i\in I, \, i+j\in J \}.
\end{align*}

When the origin $0$ belongs to $I$ then we have  $J\subset J\oplus I$ and $J\ominus I\subset J$. 
  When $J$ is a convex body  then $J\ominus I$ is a convex body. Assume now that  $I$ is also a convex body. The dilation  $J\oplus I$ is then also a convex body with $\ex(J
\oplus I)\subset \ex(I)\oplus \ex(J)$. In particular, when $I$ and $J$ are moreover convex polytopes, then so is $J\oplus I$. We have  $J\ominus I=\bigcap_{x\in \partial 'J}T_x^-J\left(h_I(-N^J(x))\right)$  (also $J\oplus I\subset\bigcap_{x\in \partial 'J}T^-_xJ\left(h_I(N^J(x))\right)$, but this last  inclusion may be strict). When $J$ is a convex polytope, the  above intersection is  finite, thus $J\ominus I$  is also a convex polytope.  The convex bodies given by   the erosion $J\ominus I$ and the dilation $J\oplus I$ are also   known as the inner and outer parallel bodies of $J$ relative to $I$.  We recall that $h_{J\oplus I}=h_J+h_I$. In particular when $I=\{i\}$ is a singleton, we get $h_{J+i}(x)=h_J(x)+i\cdot x$ for all $x\in \mathbb S^d$. In general we only have $h_{J\ominus I}\leq h_J-h_I$.

The \textbf{internal and external (morphological) boundaries} of $J$ relative to $I$ denoted respectively  by $\partial^-_I J$ and $\partial^+_I J$ are given by
 \begin{align*}
\partial_I^+J &=\left(I\oplus J\right)\setminus J,\\
\partial^-_IJ&=J\setminus(J\ominus I).
\end{align*} 
Clearly we have $\partial_{I}^\pm J =\partial_{I'}^\pm J$ with $I'=I\cup\{0\}$. When $J$ is a convex domain then we have $\partial_{I}^- J =\partial_{\cv(I)}^- J$ and $\partial_{I}^+ J \subset \partial_{\cv(I)}^+ J$. In the following the set $I$ will be fixed 
so that  we omit the index $I$ in the above definitions when there is no confusion.

Finally we observe that  $r\left( J_n\Delta (J_n\oplus I)\right), \, r\left( J_n\Delta (J_n \ominus I)\right)\leq \diam(I')$. Therefore  it follows from Lemma \ref{nul}, that if $(J_n)_n$ is a convex 
exhaustion and $I$ a convex body   then $(J_n\ominus I)_n$ and $(J_n\oplus I)_n$ define convex exhaustions with the same limit as $(J_n)_n$.

\section{Counting integer points in morphological boundary of large convex sets}

For a large convex domain $J$ and a fixed  integral polytope $I$ we estimate the cardinality  of the integer points in the morphological boundaries of $J$ relative to $I$.
We first compare the cardinality of integer points in the internal and external boundaries of $J$ and  $\mathsf J$. Recall that $\underline{F}$ denotes the set of integer points in a subset set $F$ of $\mathbb R^d$ and $\mathsf J=\cv(\underline{J})$. 

\begin{lemma}\label{jama} With the above notations we have 
\[ \underline{\partial^{-}\mathsf J} = \underline{\partial^{-}J} \]
and 
 \[ \underline{\partial^+\mathsf J}\subset \underline{ \partial^+ J}.\]
\end{lemma}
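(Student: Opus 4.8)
The plan is to reduce the lemma to the elementary identity $\underline{A\setminus B}=\underline A\setminus\underline B$, valid for arbitrary $A,B\subseteq\mathbb R^d$, together with the single observation that $\underline{\mathsf J}=\underline J$. This last equality is immediate: $\underline J\subseteq\cv(\underline J)=\mathsf J$ by definition of the convex hull, while $\mathsf J\subseteq J$ because $J$ is convex and contains $\underline J$, so that $\underline{\mathsf J}\subseteq\underline J$ as well.

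For the external boundary the inclusion is then purely formal. Since $\mathsf J\subseteq J$ we get $I\oplus\mathsf J\subseteq I\oplus J$, and therefore
$$\underline{\partial^+\mathsf J}=\underline{I\oplus\mathsf J}\setminus\underline{\mathsf J}=\underline{I\oplus\mathsf J}\setminus\underline J\subseteq\underline{I\oplus J}\setminus\underline J=\underline{\partial^+J}.$$
No use of the integrality of $I$ is needed here, only monotonicity of the Minkowski sum and $\underline{\mathsf J}=\underline J$.

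For the internal boundary, writing $\underline{\partial^-J}=\underline J\setminus\underline{J\ominus I}$ and, using $\underline{\mathsf J}=\underline J$, also $\underline{\partial^-\mathsf J}=\underline J\setminus\underline{\mathsf J\ominus I}$, it suffices to prove $\underline{J\ominus I}=\underline{\mathsf J\ominus I}$. The inclusion $\underline{\mathsf J\ominus I}\subseteq\underline{J\ominus I}$ follows again from $\mathsf J\subseteq J$. The reverse inclusion is the one substantive step, and the only place where the integrality of $I$ enters: let $j\in\mathbb Z^d$ with $i+j\in J$ for every $i\in I$. For each extremal point $v\in\ex(I)$ one has $v\in\mathbb Z^d$ (as $I$ is integral), hence $v+j\in\underline J\subseteq\mathsf J$; since $I=\cv(\ex(I))$ and $\mathsf J$ is convex, it follows that $i+j\in\mathsf J$ for all $i\in I$, i.e. $j\in\mathsf J\ominus I$. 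Thus $\underline{J\ominus I}\subseteq\underline{\mathsf J\ominus I}$, whence equality and $\underline{\partial^-\mathsf J}=\underline{\partial^-J}$.

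I do not anticipate any real difficulty; the one point worth a moment's care is degenerate configurations — $J\ominus I$ or $\mathsf J\ominus I$ empty, or $\mathsf J$ of dimension $<d$ — but in all of these the displayed equalities and inclusions hold vacuously, so no separate treatment is required.
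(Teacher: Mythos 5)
Your proof is correct and is essentially the paper's own argument: both reduce the internal-boundary statement to the fact that membership in $J\ominus I$ is detected by the extreme points of $I$, which lie in $\mathbb Z^d$, combined with $\underline{\mathsf J}=\underline J$ and convexity, while the external inclusion follows formally from $\mathsf J\subset J$. The only difference is presentational (you phrase it as the identity $\underline{J\ominus I}=\underline{\mathsf J\ominus I}$ rather than a pointwise characterization of $\partial^- J$), so no further comment is needed.
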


In general the last inclusion is strict.

\begin{proof}
For any convex domain $J$, a  point $u$ of $J$ belongs to $\partial^-J$  if and only if there is $v$ in $\ex(I)$ such that $u+v$ does not lie in $J$. As $ J\cap \mathbb Z^d=\mathsf  J\cap \mathbb Z^d$ and $\ex(I)\subset \mathbb Z^d$, we get  $ \underline{\partial^{-}J }=  \underline{\partial^{-}\mathsf J}$.  
Similarly if  a point $u\in \partial^+\mathsf J$ is an integer, then $u\in J\oplus I$ but $u\notin J$. Therefore we get $\underline{\partial^+\mathsf J}\subset \underline{\partial^+ J}$. 
\end{proof}

\begin{lemma}\label{prof}Let $J$ be a convex polytope. 
$$\sharp \underline{\partial^{-}J }\leq \sharp\underline{\partial^{+}J }.$$
\end{lemma}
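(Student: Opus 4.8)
The plan is to build an explicit injection $\phi\colon\underline{\partial^{-}J}\hookrightarrow\underline{\partial^{+}J}$ which pushes each lattice point of the internal boundary outward by a single vertex of $I$, that vertex being chosen according to the facet of $J$ into whose ``morphological slab'' the point penetrates most deeply. We may assume $J$ is $d$-dimensional and write $J=\bigcap_{F\in\mathcal F(J)}\{x:\ N^F\cdot x\le h_J(N^F)\}$, the intersection being over the facets $F$. Since $J\ominus I=\bigcap_{v\in I}(J-v)$ and $h_I(N^F)=\max_{v\in I}N^F\cdot v$, intersecting the halfspace descriptions of the translates $J-v$ gives $J\ominus I=\bigcap_{F}\{x:\ N^F\cdot x\le h_J(N^F)-h_I(N^F)\}$, hence
\[
\partial^{-}J=\Bigl\{u\in J:\ \exists F\in\mathcal F(J),\ \delta_F(u):=N^F\cdot u-\bigl(h_J(N^F)-h_I(N^F)\bigr)>0\Bigr\}.
\]
For each facet $F$ fix once and for all a vertex $v_F\in\ex(I)\subset\mathbb Z^d$ with $N^F\cdot v_F=h_I(N^F)$; for $u\in\underline{\partial^{-}J}$ let $F(u)$ be a facet maximizing $\delta_F(u)$, ties broken by a fixed total order on $\mathcal F(J)$, so that $\delta_{F(u)}(u)>0$. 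Set $\phi(u)=u+v_{F(u)}$. Then $\phi(u)$ is a lattice point of $J+v_{F(u)}\subset J\oplus I$, and $N^{F(u)}\cdot\phi(u)=N^{F(u)}\cdot u+h_I(N^{F(u)})>h_J(N^{F(u)})$ shows $\phi(u)\notin J$, so $\phi(u)\in\underline{\partial^{+}J}$.

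The heart of the proof is injectivity of $\phi$. Since $\phi$ is a translation on the points assigned to a given facet, it suffices to exclude $\phi(u)=\phi(u')=:w$ with $u$ assigned to $F$, $u'$ assigned to $F'$ and $F\neq F'$. From $u=w-v_F$ and $N^F\cdot v_F=h_I(N^F)$ one gets $\delta_F(u)=N^F\cdot w-h_J(N^F)$, while for any facet $G$ the bound $N^G\cdot v_F\le h_I(N^G)$ gives $\delta_G(u)\ge N^G\cdot w-h_J(N^G)$; the symmetric computation from $u'=w-v_{F'}$ gives $\delta_{F'}(u')=N^{F'}\cdot w-h_J(N^{F'})$ and $\delta_G(u')\ge N^G\cdot w-h_J(N^G)$. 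Inserting these into ``$F$ maximizes $\delta_\bullet(u)$'' and ``$F'$ maximizes $\delta_\bullet(u')$'' forces the chain
\[
\delta_{F'}(u')=N^{F'}\cdot w-h_J(N^{F'})\le\delta_{F'}(u)\le\delta_F(u)=N^F\cdot w-h_J(N^F)\le\delta_F(u')\le\delta_{F'}(u')
\]
to consist entirely of equalities. Hence $F$ and $F'$ both attain the maximum of $\delta_\bullet$ at $u$ and at $u'$, so the tie-breaking order forces $F=F'$, a contradiction. Therefore $\phi$ is injective and $\sharp\underline{\partial^{-}J}\le\sharp\underline{\partial^{+}J}$.

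The only genuine obstacle is choosing the right selection rule: the naive map $u\mapsto u+v$ for an arbitrary admissible vertex $v$ of $I$ is easily seen not to be injective, and weighting by the penetration depth $\delta_F$ (equivalently, assigning $u$ to the facet whose morphological slab it enters deepest) is exactly what makes the parallelogram relation $w=u+v_F=u'+v_{F'}$ collapse. Beyond that, one only checks harmless edge cases — facets with $h_I(N^F)\le 0$ carry no point of $\partial^{-}J$, and the displayed description of $\partial^{-}J$ remains valid when $J\ominus I=\emptyset$ — and recalls that for an integral polytope $I$ the values $h_I(N^F)$ are attained at lattice vertices, so each $v_F$ indeed lies in $\mathbb Z^d$.
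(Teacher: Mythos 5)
Your proof is correct and follows essentially the same route as the paper: an explicit injection on $\underline{\partial^-J}$ sending $x\mapsto x+u^F$, where $u^F\in\ex(I)\subset\mathbb Z^d$ attains $h_I(N^F)$ and the facet $F$ is chosen by a selection rule, with injectivity checked by comparing positions relative to the shifted supporting halfspaces. The only difference is the selection rule — you assign each point to the facet of maximal penetration depth with a tie-break, whereas the paper assigns it to the first facet, in an arbitrary fixed enumeration, whose slab contains it, which makes its injectivity argument a one-line halfspace comparison.
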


\begin{proof}
We have $\partial^-J\subset \bigcup_{F\in \mathcal F(J)}T_F^+J(-h_I(N^F))$. For  $F\in \mathcal F(J)$ there exists $u^F\in \ex(I)$ with $h_I(N^F)=u^F\cdot N^F$. 
Let $F_1,\cdots F_N$ be an enumeration of $\mathcal F(J)$. Let $\phi:\partial^-J\rightarrow \partial^+J$ be the function  defined by 
$\phi(x)=x+u^{F_1}$ for $x\in S_1:= \partial^-J\cap T_{F_1}^+J(-h_I(N^{F_1}))$ and  $\phi(x)=x+u^{F_l}$ for $x\in S_l:= \partial^-J\cap  T_{F_l}^+J(-h_I(N^{F_l}))\setminus \bigcup_{k<l} T_{F_k}^+J(-h_I(N^{F_k}))$  by induction on $l$.

This map is injective : indeed if $\phi(x)=\phi(y)$ either $x$ and $y$ lie in the same $S_l$ and then $\phi(x)=x+u^{F_l}=y+u^{F_l}=\phi(y)$ clearly implies $x=y$ or $x\in 
S_k, y\in S_l$ with $k\neq l$. We may assume $k<l$ without loss of generality. Then $y+u^{F_l}\in T_{F_k}^-J$ whereas $x+u^{F_k}\in T^+_{F_k}J$ and we get 
thus a contradiction. Finally the map $\phi$ preserves the integer points  since we have $\ex(I)\subset \mathbb Z^d$.
\end{proof}

\subsection{First relative quermass integral}

Let $O$ be a convex domain and let  $I$ be a convex body. For $\rho \in \mathbb R$ we let 
$$ O_\rho =\left\{
\begin{array}{rl}
  O\oplus \rho I  \text{ when }\rho\geq 0, \\
O\ominus \rho I \text{ when } \rho<0.
\end{array}
\right.$$

\begin{prop}\label{voll}
$$\lim_{\rho\rightarrow 0}\frac{V(O_\rho)-V(O)}{\rho}=\int_{\mathbb S^d}h_I \,d\sigma_O .$$
\end{prop}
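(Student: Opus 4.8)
Here is the plan I would follow.

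The idea is to split the two‑sided limit into the right and the left derivative of $\rho\mapsto V(O_\rho)$ at $\rho=0$ and to show that each equals $\int_{\mathbb S^d}h_I\,d\sigma_O$. First I would note that both sides of the asserted identity are invariant under translating $I$: replacing $I$ by $I+w$ translates $O_\rho$ by $\rho w$, so $V(O_\rho)$ is unchanged, while $h_{I+w}=h_I+w\cdot(\,\cdot\,)$ and $\int_{\mathbb S^d}u\,d\sigma_O(u)=0$ (the closing‑up relation of the area measure, \cite{Gru}), so $\int h_{I+w}\,d\sigma_O=\int h_I\,d\sigma_O$. Hence I may assume $0\in I$; then for $\rho<0$ close to $0$ the set $O_\rho=O\ominus(-\rho)I$ is a nonempty convex domain, and it suffices to treat $\lim_{\rho\to0^+}$ and $\lim_{\rho\to0^-}$ separately.

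For $\rho\to0^+$ one has $O_\rho=O\oplus\rho I$, and the statement is the first variation of the volume of outer parallel bodies. I would either quote Minkowski's theorem directly (the function $\rho\mapsto V(O\oplus\rho I)$ is a polynomial on $[0,\infty)$ with nonnegative coefficients, constant term $V(O)$ and linear coefficient $\int_{\mathbb S^d}h_I\,d\sigma_O$, see \cite{Gru}), or give a self‑contained argument: take polytopes $O_n\to O$ in the Hausdorff topology, for which the linear coefficient equals $\sum_{F}h_I(N^F)\mathcal H_{d-1}(F)=\int_{\mathbb S^d}h_I\,d\sigma_{O_n}$ (decompose $(O_n\oplus\rho I)\setminus O_n$ into the prisms of heights $\rho h_I(N^F)$ over the facets $F$ of $O_n$, plus corrections of order $\rho^2$ near the lower‑dimensional faces), the coefficients of order $\ge2$ being bounded uniformly in $n$ (e.g.\ by those of a fixed ball $\bar B\supseteq\bigcup_nO_n$); then let $n\to\infty$ using Hausdorff‑continuity of $V$, weak convergence $\sigma_{O_n}\to\sigma_O$ (\cite{Gru}, Prop.~10.2) and continuity of $h_I$ to get $V(O\oplus\rho I)=V(O)+\rho\int_{\mathbb S^d}h_I\,d\sigma_O+O(\rho^2)$. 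Either way the right derivative is $\int_{\mathbb S^d}h_I\,d\sigma_O$.

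For $\rho\to0^-$, write $t=-\rho$, so $O_\rho=O\ominus tI$, and I would prove two inequalities. For the lower bound, $(O\ominus tI)\oplus tI\subseteq O$, and applying the previous expansion to the convex body $K:=O\ominus tI\subseteq O$ (nonnegative, uniformly controlled higher‑order terms) gives $V(O)\ge V\big((O\ominus tI)\oplus tI\big)\ge V(O\ominus tI)+t\int_{\mathbb S^d}h_I\,d\sigma_{O\ominus tI}$, i.e.\ $\tfrac{V(O)-V(O\ominus tI)}{t}\ge\int h_I\,d\sigma_{O\ominus tI}$; since $r\big(O\,\Delta\,(O\ominus tI)\big)\le t\,\diam(I')\to0$ we have $O\ominus tI\to O$ in the Hausdorff topology (as in the proof of Lemma \ref{nul}), so by weak convergence of area measures the right‑hand side tends to $\int_{\mathbb S^d}h_I\,d\sigma_O$, whence $\liminf_{t\to0^+}\ge\int_{\mathbb S^d}h_I\,d\sigma_O$. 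For the matching upper bound, when $O$ is a polytope I would argue directly: $O\ominus tI=\bigcap_{i\in\ex(I)}(O-ti)=\bigcap_{F}\{x:x\cdot N^F\le h_O(N^F)-t\,h_I(N^F)\}$ over the facets $F$ of $O$, so $O\setminus(O\ominus tI)$ is the union of the slabs $S_F=\{x\in O:h_O(N^F)-t\,h_I(N^F)<x\cdot N^F\le h_O(N^F)\}$; since the area of the slice of $O$ parallel to $F$ at depth $s\in[0,t\,h_I(N^F)]$ is $\mathcal H_{d-1}(F)+O(s)$, one gets $V(S_F)\le t\,h_I(N^F)\mathcal H_{d-1}(F)+O(t^2)$, hence $V(O)-V(O\ominus tI)\le t\sum_Fh_I(N^F)\mathcal H_{d-1}(F)+O(t^2)=t\int_{\mathbb S^d}h_I\,d\sigma_O+O(t^2)$ and $\limsup\le\int_{\mathbb S^d}h_I\,d\sigma_O$. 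For a general convex domain $O$, I would obtain this last bound from the (classical) Steiner‑type formula for inner parallel bodies (\cite{Gru}), or by polytope approximation combined with the fact that the boundary layer $\{x\in O:d(x,\partial O)<ct\}$ has volume $O(t)$ uniformly. Combining the two sides gives the proposition.

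The step I expect to be the main obstacle is exactly this last upper bound for a non‑polyhedral $O$. The "soft" inputs — the inclusions $(O\ominus tI)\oplus tI\subseteq O\subseteq O\oplus tI$ and the monotonicity and nonnegativity of mixed volumes — only yield the $\liminf$; getting the matching $\limsup$ requires genuinely controlling how much volume the erosion removes, which for polytopes is the explicit slab count above but in general needs the full Steiner theory of inner parallel bodies, equivalently a boundary‑layer (coarea) estimate exploiting that the non‑smooth part of $\partial O$ has zero $\mathcal H_{d-1}$‑measure and hence contributes only $o(t)$ to the removed volume.
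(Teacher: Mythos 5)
The paper itself does not prove Proposition \ref{voll}: it disposes of $\rho>0$ by quoting Minkowski's mixed--volume (Steiner) formula from \cite{Gru} and of $\rho<0$ by citing Matheron \cite{Mat} (and \cite{Cha} for $d=2$). Your plan is therefore more ambitious than the paper's argument, and most of it is sound: the reduction to $0\in I$ via $\int_{\mathbb S^d}u\,d\sigma_O(u)=0$ is correct, the right derivative via the Steiner polynomial is exactly the paper's route, and your lower bound for the erosion side --- $(O\ominus tI)\oplus tI\subseteq O$, nonnegativity of the higher mixed--volume terms, Hausdorff convergence $O\ominus tI\to O$ and weak convergence $\sigma_{O\ominus tI}\to\sigma_O$ --- is a correct, self-contained argument that the paper does not spell out. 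The polytope case of the upper bound (slab decomposition over the facets) is also fine.

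The soft spot is the one you flag: the upper bound for a general convex domain, and your two proposed patches do not close it as stated. There is no exact Steiner formula for inner parallel bodies ($t\mapsto V(O\ominus tI)$ is not polynomial in general); the statement you would need --- one-sided differentiability at $t=0$ with derivative $-\int_{\mathbb S^d}h_I\,d\sigma_O$ --- \emph{is} Matheron's theorem, i.e.\ the very thing being proved, so citing \cite{Gru} for it is unreliable, while citing \cite{Mat} simply reproduces the paper's proof (legitimate, but then your extra work on this side is redundant). Likewise, the uniform boundary-layer bound $V(\{x\in O:\ d(x,\partial O)\le ct\})\le ct\,p(O)$ (Lemma \ref{zut}) only bounds the difference quotient, it does not give the sharp constant, and polytope approximation does not transfer your slab estimate because the $O(t^2)$ error there is not uniform along an approximating sequence. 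If you want a genuine proof rather than a citation, the missing ingredient can be isolated as: $V\bigl(O\setminus((O\ominus tI)\oplus tI)\bigr)=o(t)$; combined with the expansion $V(K\oplus tI)\le V(K)+t\int h_I\,d\sigma_K+Ct^2$ for $K=O\ominus tI$ (with $C$ controlled by monotonicity of mixed volumes, $K\subseteq O$) this yields the matching $\limsup$. Proving that $o(t)$ estimate is where the $\mathcal H_{d-1}$-a.e.\ existence of normals to $\partial O$ must actually be used, and it is not carried out in your sketch.
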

For $\rho >0$ the formula follows from Minkowski's formula on mixed volume (see Theorem 6.5 and Corollary 10.1 in \cite{Gru}). For $\rho <0$ 
we refer to \cite{Mat} (see also Lemma 2 in \cite{Cha} for the $2$-dimensional case). \\

The  quantity  $d\int_{\mathbb S^d}h_I \,d\sigma_O $ is known as the \textbf{first $I$-relative quermass integral} of $J$. In the following we denote by $V_I(O)$ the integral $\int_{\mathbb S^d}h_I \,d\sigma_O $.   For convex bodies $I\subset H$ and $k\in \mathbb N$, we have $V_{I}(O)\leq V_{H}(O)$ and $V_{kI}(O)=kV_I(O)$  for any convex domain $O$.  The support  function $h_I$ being continuous, the first $I$-relative quermass integral of $O$ is continuous with respect to the Hausdorff topology, i.e. if $(O_n)_n$ is a sequence of convex domains converging to a convex domain $O_\infty$ in the Hausdorff topology, 
then we have $$V_I(O_n)\xrightarrow{n\rightarrow +\infty}V_I(O_\infty).$$

We deduce now from Proposition \ref{voll} an estimate on the volume of the morphological boundary for large convex sets.

\begin{coro}\label{reff}
Let $I$ be a convex body containing $0$ and let $O\in \mathcal D$. Then 
$$V\left(\partial^{\pm}_InO\right)\sim n^{d-1}\int_{\mathbb S^d}h_I \,d\sigma_O .$$
\end{coro}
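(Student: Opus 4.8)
The plan is to derive Corollary \ref{reff} directly from Proposition \ref{voll} by a scaling argument. First recall that for a convex body $I$ containing $0$ we have, by definition of the morphological boundaries, $\partial^+_I nO = (nO\oplus I)\setminus (nO)$ and $\partial^-_I nO = (nO)\setminus (nO\ominus I)$, so that
\[
V\left(\partial^+_I nO\right)=V(nO\oplus I)-V(nO)=V\left((nO)_{1}\right)-V(nO),
\]
\[
V\left(\partial^-_I nO\right)=V(nO)-V(nO\ominus I)=V(nO)-V\left((nO)_{-1}\right),
\]
using the notation $(nO)_\rho$ from the statement of Proposition \ref{voll} (here $0\in I$ guarantees $nO\ominus I\subset nO\subset nO\oplus I$, so both quantities are nonnegative).

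The key step is to rescale. Since dilation and erosion commute with homotheties in the sense that $n(O\oplus \tfrac1n I)=nO\oplus I$ and $n(O\ominus\tfrac1n I)=nO\ominus I$, and since $V$ is $d$-homogeneous, we get
\[
V(nO\oplus I)-V(nO)=n^d\left(V\left(O\oplus\tfrac1n I\right)-V(O)\right)=n^d\left(V(O_{1/n})-V(O)\right),
\]
and similarly $V(nO)-V(nO\ominus I)=n^d\left(V(O)-V(O_{-1/n})\right)$. Now apply Proposition \ref{voll} with $\rho=\pm\tfrac1n\to 0$: it gives
\[
\lim_{n\to\infty} n\left(V(O_{\pm 1/n})-V(O)\right)=\pm\int_{\mathbb S^d}h_I\,d\sigma_O,
\]
so that $n^d\left(V(O_{1/n})-V(O)\right)\sim n^{d-1}\int_{\mathbb S^d}h_I\,d\sigma_O$ and likewise for the internal boundary, which is exactly the claimed asymptotic $V\left(\partial^\pm_I nO\right)\sim n^{d-1}\int_{\mathbb S^d}h_I\,d\sigma_O$.

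The only genuine subtlety is verifying the homogeneity relations $nO\oplus I=n(O\oplus\tfrac1n I)$ and $nO\ominus I=n(O\ominus\tfrac1n I)$ — these are immediate from the definitions of $\oplus$ and $\ominus$ — and checking that Proposition \ref{voll} may indeed be applied with $\rho$ ranging over a sequence tending to $0$ from either side, which it can since the proposition asserts a two-sided limit. Everything else is bookkeeping; I do not anticipate any real obstacle here, as the corollary is essentially a restatement of Proposition \ref{voll} after the homothety $x\mapsto nx$.
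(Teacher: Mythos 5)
Your proof is correct and follows essentially the same route as the paper: write $V(\partial^{\pm}_I nO)$ as a difference of volumes, use homogeneity to rewrite it as $n^d\left(V(O_{\pm 1/n})-V(O)\right)$, and apply Proposition \ref{voll} with $\rho=\pm 1/n$. The paper only writes out the external case and notes the internal one is analogous, so your explicit treatment of both sides is just a minor elaboration, not a different argument.
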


\begin{proof}
We only consider the case of the external boundary as one may argue similarly for the internal boundary.  For all $n$ we have 
\begin{align*}
V\left(\partial^{+}_InO\right)&=V\left( nO\oplus I \right)-V\left(nO\right),\\
&= n^{d}\left(V(O\oplus n^{-1}I)-V(O)\right)
\end{align*}
According to Proposition \ref{voll} we conclude that 
\begin{align*}
V\left(\partial^{+}_InO\right)&\sim n^{d-1}\int_{\mathbb S^d}h_I \,d\sigma_O.
\end{align*}

\end{proof}

\subsection{Counting integer points in large convex sets}

Since Gauss circle problem counting lattice points in convex sets has been extensively investigated. Let $\mathsf C=[0,1]^d$. Clearly  for any Borel subset $K$ of $\mathbb R^d$  we have always
\begin{equation}\label{fac}\sharp \underline{K}
\leq V(K\oplus \mathsf C).\end{equation}  

In the other hand, Bokowski, Hadwiger and  Wills have proved the following general (sharp) inequality for any convex domain $O$ \cite{BHW} :
\begin{equation}\label{bol}V(O)-\frac{p(O)}{2}\leq \sharp \underline{O}.
\end{equation}
There exist precise asymptotic estimates of $\sharp \underline{x O}$ for large $x>0$ for convex  smooth domains $O$ having positive curvature, in particular we have in this case $\sharp \underline{x O}=V(xO)+o(x^{d-1})$ \cite{Hla}. 

\subsection{First rough estimate for $\sharp\partial_I^\pm nO\cap \mathbb Z^d$ with $O\in \mathcal D$}
For a real sequence $(a_n)_n$ and two numbers $l$ and $C>0$ we write $a_n\sim^C l$ when the accumulation points of $(a_n)_n$ lie in 
$[l-C,l+C]$.
\begin{lemma}\label{grg}There exists a constant $C$ depending only on $d$ such that we have for any 
 convex domain $O\in \mathcal D$ and any convex body $I$ of $\mathbb R^d$ with $0\in I$ :
 $$\frac{\sharp\underline{ \partial_I^\pm nO }}{p(nO)}\sim^{C}  \frac{V_I\left(O\right)}{p(O)}.$$
\end{lemma}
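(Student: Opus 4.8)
The plan is to sandwich $\sharp\underline{\partial_I^\pm nO}$ between two volumes, each comparable to $V_I(nO) = nV_I(O)\cdot\frac{1}{n}\cdot n^{d-1}$-type quantities, and control the error by $p(nO)$. First I would handle the external boundary $\partial_I^+nO = (nO\oplus I)\setminus nO$. For the upper bound, apply \eqref{fac}: $\sharp\underline{\partial_I^+nO}\leq V\big((\partial_I^+nO)\oplus\mathsf C\big)\leq V\big((nO\oplus(I\oplus\mathsf C))\setminus(nO\ominus\mathsf C)\big)$, which by Corollary \ref{reff} (applied with the enlarged convex body $I\oplus\mathsf C\supset I$, and separately estimating the thin layer $nO\setminus(nO\ominus\mathsf C)$) is asymptotically $n^{d-1}\big(V_{I\oplus\mathsf C}(O)+V_{\mathsf C}(O)\big)$. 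Since $V_{\mathsf C}(O)$ and the gap $V_{I\oplus\mathsf C}(O)-V_I(O)=V_{\mathsf C}(O)$ are both bounded by a dimensional constant times $p(O)$ (because $h_{\mathsf C}\leq\sqrt d$ uniformly on $\mathbb S^d$, so $V_{\mathsf C}(O)\leq\sqrt d\,\sigma_O(\mathbb S^d)=\sqrt d\,p(O)$), and $p(nO)=n^{d-1}p(O)$, this gives the required $\sim^C$ upper bound with $C$ dimensional.

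For the lower bound on the external boundary I would use \eqref{bol} in the form $V(K)-\frac{p(K)}{2}\leq\sharp\underline K$ applied to a convex domain sitting inside $\partial_I^+nO$: for instance $nO\oplus I$ contains the shell, so $\sharp\underline{\partial_I^+nO}=\sharp\underline{nO\oplus I}-\sharp\underline{nO}\geq \big(V(nO\oplus I)-\tfrac{p(nO\oplus I)}{2}\big)-V\big(nO\oplus\mathsf C\big)$, and again Corollary \ref{reff} together with $p(nO\oplus I)=O(n^{d-1})$ and $V(nO\oplus\mathsf C)-V(nO)=O(n^{d-1}p(O))$ yields $\sharp\underline{\partial_I^+nO}\geq n^{d-1}V_I(O)-O(n^{d-1}p(O))$. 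Dividing by $p(nO)=n^{d-1}p(O)$ gives the claimed two-sided bound. For the internal boundary $\partial_I^-nO=nO\setminus(nO\ominus I)$ the argument is the mirror image: $\sharp\underline{\partial_I^-nO}=\sharp\underline{nO}-\sharp\underline{nO\ominus I}$, bound $\sharp\underline{nO}$ above by \eqref{fac} and below by \eqref{bol}, bound $\sharp\underline{nO\ominus I}$ the opposite way, and invoke Corollary \ref{reff} for $V(\partial_I^-nO)\sim n^{d-1}V_I(O)$; all perimeter and $\mathsf C$-shell corrections are again $O(n^{d-1}p(O))$.

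The main obstacle I anticipate is bookkeeping the error terms so that the constant $C$ genuinely depends on $d$ alone and not on $O$ or $I$. The key point making this work is that every correction term — the $\mathsf C$-dilation in \eqref{fac}, the $\frac{p}{2}$ in \eqref{bol}, and the difference between $V_I$ and $V_{I\oplus\mathsf C}$ — is controlled by the perimeter (equivalently $\sigma_O(\mathbb S^d)$) of the convex domain involved times a dimensional constant, and all the convex domains in play ($nO$, $nO\oplus I$, $nO\ominus I$, their $\mathsf C$-modifications) have perimeter $\sim n^{d-1}p(O)$ by the continuity of the perimeter under Hausdorff limits (the remark after the area-measure discussion in Section \ref{conv}) applied to their rescalings, or more directly since $r(nO\,\Delta\,(nO\oplus I))\leq\diam(I')$ is negligible. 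One should also note $\partial_I^\pm nO=\partial_{\cv(I)}^\pm nO$ for the internal one and $\subset$ for the external, and that replacing $I$ by $I\cup\{0\}$ changes nothing, so WLOG $0\in I$ as assumed; I would state these reductions at the start.
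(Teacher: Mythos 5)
Your proposal follows essentially the same route as the paper: write $\sharp\underline{\partial_I^\pm nO}$ as a difference of lattice-point counts, sandwich it between volumes via (\ref{fac}) and (\ref{bol}), pass to the limit with Corollary \ref{reff}, and absorb all correction terms (using $h_{\mathsf C}\leq\sqrt d$ and $\sigma_O(\mathbb S^d)=p(O)$, so everything is a dimensional constant times $p(O)$) into $C=C(d)$. The only cosmetic deviation is your first upper bound for the external boundary (where the erosion should be by $-\mathsf C$ rather than $\mathsf C$ for the stated inclusion to hold), which does not affect the estimate.
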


\begin{proof}
We only argue for $\partial^+_IO$, the other case being similar. We have $\sharp \underline{\partial_I^+ nO}=\sharp \underline{nO\oplus I}-\sharp \underline{nO}$, and then by combining Equation (\ref{fac}) and (\ref{bol}) we get :
\begin{eqnarray*}
V(nO\oplus I)-\frac{p(nO\oplus I)}{2}-V(nO+\mathsf C)&\leq \sharp \underline{\partial_I^\pm nO}&\leq V(nO\oplus I\oplus \mathsf C)-V(nO)+\frac{p(nO)}{2},\\
\end{eqnarray*}

After dividing by $n^{d-1}$, the right (resp. left) hand side term is going to $\int_{\mathbb S^d}(h_I-h_{\mathsf C}-1/2) \,d\sigma_O$ (resp. $\int_{\mathbb S^d}(h_{I}+h_{\mathsf C}+1/2)\,d\sigma_O$ )  according to Corollary \ref{reff}.
\end{proof}

\subsection{Upperbound of $\partial^-J_n$ for general convex exhaustions}

For a subset $E$ of $\mathbb R^d$ and for $r>0$ we let $E(r):=\{x\in E, d(x,\partial E)\leq r\}$ with $d$ being the Euclidean distance.
With the previous notations we may also write $E(r)=\partial^-_{B_r}E$ where $B_r$ denotes the Euclidean  ball centered at $0$ with radius $r$.

\begin{lemma}\label{zut}
For any  convex body $J$  in $\mathbb R^d$, we have 
$$V\left(J(r)\right)\leq  r p(J).$$
\end{lemma}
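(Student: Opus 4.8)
The plan is to reduce the statement to the co-area formula applied to the distance-to-boundary function. Set $g(x) = d(x,\partial J)$ for $x \in J$. This is a $1$-Lipschitz function on the convex body $J$, and by definition $J(r) = \{x \in J : g(x) \le r\}$. The co-area formula for Lipschitz functions gives
\begin{equation*}
V(J(r)) = \int_{J(r)} |\nabla g|\, dx \le \int_0^r \mathcal{H}_{d-1}\left(g^{-1}(t)\right)\, dt,
\end{equation*}
where the first equality uses $|\nabla g| = 1$ almost everywhere (true for the distance function to the boundary of a convex set away from the medial axis, which has measure zero). So it suffices to show $\mathcal{H}_{d-1}(g^{-1}(t)) \le p(J)$ for a.e. $t \in (0,r)$; integrating this bound over $(0,r)$ then yields $V(J(r)) \le r\, p(J)$.

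For the level-set estimate, I would use that the level set $g^{-1}(t) = \partial(J \ominus B_t)$ is the boundary of the inner parallel body $J_t := J \ominus B_t$, which is again a convex body contained in $J$ (as noted in the excerpt, erosion of a convex body by a convex body is a convex body). Since $J_t \subset J$, monotonicity of the perimeter under inclusion for convex bodies gives $p(J_t) \le p(J)$, hence $\mathcal{H}_{d-1}(\partial J_t) = p(J_t) \le p(J)$. That is precisely the bound needed. Actually one has to be slightly careful: $g^{-1}(t)$ could differ from $\partial J_t$ on a lower-dimensional set, or $J_t$ could be empty or lower-dimensional for large $t$, but in those cases $\mathcal{H}_{d-1}(g^{-1}(t)) = 0$ and the inequality is trivial; for the relevant range $t < r(J)$ everything is genuinely $d$-dimensional.

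The main obstacle, or at least the point requiring the most care, is justifying $|\nabla g| = 1$ almost everywhere and the applicability of the co-area formula — i.e. handling the medial axis (the set of points with more than one nearest boundary point) and checking it is Lebesgue-null. An alternative, more elementary route that avoids the co-area machinery entirely: write $J(r)$ as the image of $\partial J \times [0,r]$ under the map $\Phi(x,t) = x - t\, N^J(x)$ defined on $\partial' J$ (which has full $\mathcal{H}_{d-1}$-measure), observe that $\Phi$ is $1$-Lipschitz in the $t$-variable for fixed direction and that for fixed $t$ the map $x \mapsto x - t N^J(x)$ is a contraction onto $\partial J_t$ (convexity makes the inward normal map non-expanding), so $V(J(r)) \le \int_0^r \mathcal{H}_{d-1}(\partial J_t)\, dt \le \int_0^r p(J)\, dt = r\, p(J)$. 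Either way the structural input is the same monotonicity $p(J \ominus B_t) \le p(J)$, and I would present whichever of the two is cleanest given the conventions already set up in Section~\ref{conv}.
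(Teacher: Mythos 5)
Your main argument is correct, but it takes a genuinely different route from the paper. The paper's proof is elementary: for a convex polytope it covers $J(r)$ by the prisms $R_F(r)=\{x-tN^F,\ x\in F,\ t\in[0,r]\}$ over the faces $F$ (using that the foot of the nearest-point projection of a point of $J(r)$ onto the supporting hyperplane of the closest face lies in that face), so that $V(J(r))\leq\sum_F r\,\mathcal H_{d-1}(F)=r\,p(J)$, and then treats a general convex body by approximating it from inside by polytopes $J_p\to J$ and passing to the limit in $V(J_p(r))$ and $p(J_p)$. Your proof instead applies the co-area formula to the $1$-Lipschitz function $g=d(\cdot,\partial J)$ and bounds the level sets by $\mathcal H_{d-1}(g^{-1}(t))=\mathcal H_{d-1}\bigl(\partial(J\ominus B_t)\bigr)=p(J\ominus B_t)\leq p(J)$, using the classical monotonicity of surface area under inclusion of convex bodies. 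The structural input is therefore different: the paper never invokes perimeter monotonicity, while it is the heart of your argument; conversely you avoid any polytope approximation, at the cost of the co-area formula, the a.e.\ identity $|\nabla g|=1$, and the identification of $g^{-1}(t)$ with $\partial(J\ominus B_t)$ for a.e.\ $t$ (valid for $0<t<r(J)$ by concavity of $g$ on $J$; the exceptional values of $t$ are Lebesgue-null, as you note). Both are sound; the paper's is more self-contained, yours is shorter once the geometric-measure-theoretic facts are granted.

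One caution about your ``more elementary'' alternative: the claim that, for fixed $t$, the map $x\mapsto x-tN^J(x)$ is a contraction onto $\partial(J\ominus B_t)$ is false in general. At boundary points whose radius of curvature is smaller than $t$ (e.g.\ near the co-vertex of a very eccentric ellipse with $t$ equal to the inradius) this map is expanding, and its image points may lie strictly closer than $t$ to $\partial J$, hence off $\partial(J\ominus B_t)$. The correct non-expanding map is the nearest-point projection onto the convex set $J\ominus B_t$, which is what underlies $p(J\ominus B_t)\leq p(J)$; so if you want to avoid the co-area formula you should either argue as the paper does, or set up the covering of $J(r)$ via that projection rather than via the inward normal flow. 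Since you only offer this as an option and your primary argument stands on its own, this does not affect the validity of your proof.
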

\begin{proof}
We first assume  that $J$ is a convex polytope. Let $x\in J(r)$. There is $F\in \mathcal F(\mathsf J)$ with $\|x-x_F\|\leq d(x,F)=d(x, \partial J)\leq r$, where $x_F$ denotes the orthogonal projection of $x$ onto $T_F$. Observe that $x_F$ belongs to $F$ : if not the segment line $[x,x_F]$ would have a non empty intersection with $\partial J$ and the intersection point $y\in \partial J$
 would satisfy $\|x-y\|< \|x-x_F\|\leq d(x, \partial J)$. Therefore $J(r)\subset \bigcup_{F\in \mathcal F(J)}R_F(r)$ with $R_F(r):=\{x-tN^F(x), \ x\in F \text{ and } t\in [0,r]\}$. Finally we get 
 \begin{align*}
 V\left(J(r) \right)&\leq \sum_{F\in \mathcal F(J)}V\left(R_F(r)\right),\\
 &\leq rp(J).
 \end{align*}
For a general convex body, there is a nondecreasing sequence $(J_p)_p$ of convex polytopes contained in $J$ converging to $J$ in the Hausdorff topology. Then the characteristic function of $J_p(r)$ is converging pointwisely to the characteristic function of $J(r)$, in particular $V\left(J_p(r)\right)\xrightarrow{p}V\left(J(r)\right)$. Moreover $p(J_p)$ goes to $p(J)$, so that the desired inequality is obtained by taking the limit in the inequalities for the convex polytopes $J_p$. 
\end{proof}

\begin{prop}\label{fin}
For any convex exhaustion $(J_n)_n$ in $\mathbb R^d$, we have 
$$\limsup_n\frac{\sharp \underline{\partial_I ^- J_n}}{p\left(J_n\right)}\leq \diam(I')+\sqrt d.$$
\end{prop}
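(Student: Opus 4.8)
The strategy is to reduce the counting estimate on $\underline{\partial_I^-J_n}$ to two already available ingredients: first, the fact that for the \emph{dilated-to-unit-perimeter} sequence $\widetilde{J_n}=p(J_n)^{-1/(d-1)}J_n$ one has Hausdorff convergence to the limit $J_\infty\in\mathcal D$, and second, Lemma \ref{zut} which controls the volume of the boundary collar $J(r)$ of a convex body. The key geometric observation is that the internal morphological boundary $\partial_I^-J_n=J_n\setminus(J_n\ominus I)$ is contained in the collar $J_n(\diam(I'))$: indeed, if $u\in J_n$ but $u+v\notin J_n$ for some $v\in I'$, then $u$ is within distance $\|v\|\le\diam(I')$ of $\partial J_n$, since the segment from $u$ to $u+v$ crosses $\partial J_n$. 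Hence $\partial_I^-J_n\subset J_n(\diam(I'))$, and by Lemma \ref{zut} the volume of this set is at most $\diam(I')\,p(J_n)$.

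Next I would pass from volume to the number of integer points. Using the elementary bound \eqref{fac}, namely $\sharp\underline{K}\le V(K\oplus\mathsf C)$ with $\mathsf C=[0,1]^d$, applied to $K=\partial_I^-J_n$, we get $\sharp\underline{\partial_I^-J_n}\le V\big((\partial_I^-J_n)\oplus\mathsf C\big)$. Now $(\partial_I^-J_n)\oplus\mathsf C\subset (J_n\oplus\mathsf C)\setminus(J_n\ominus I\ominus\mathsf C)$ — or more simply, enlarging the collar: every point of $(\partial_I^-J_n)\oplus\mathsf C$ lies within distance $\diam(I')+\diam(\mathsf C)=\diam(I')+\sqrt d$ of $\partial(J_n\oplus\mathsf C)$; alternatively one argues that $(\partial_I^-J_n)\oplus \mathsf C$ is contained in $(J_n\oplus\mathsf C)\big(\diam(I')+\sqrt d\big)$, a collar of the convex body $J_n\oplus\mathsf C$. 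Applying Lemma \ref{zut} to $J_n\oplus\mathsf C$ gives
\[
\sharp\underline{\partial_I^-J_n}\le \big(\diam(I')+\sqrt d\big)\,p(J_n\oplus\mathsf C).
\]

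Finally I divide by $p(J_n)$ and take $\limsup_n$. Since $(J_n)_n$ is a convex exhaustion and $\mathsf C$ is a convex body, the remark at the end of Section \ref{conv} (a consequence of Lemma \ref{nul}) shows that $(J_n\oplus\mathsf C)_n$ is a convex exhaustion with the same limit, so $p(J_n\oplus\mathsf C)\sim^n p(J_n)$, and therefore $\limsup_n p(J_n\oplus\mathsf C)/p(J_n)=1$. This yields
\[
\limsup_n\frac{\sharp\underline{\partial_I^-J_n}}{p(J_n)}\le \diam(I')+\sqrt d,
\]
as claimed.

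\textbf{Main obstacle.} The routine points (the two volume-to-cardinality bounds, the asymptotic equivalence of perimeters) are all handed to us. The one place demanding care is the containment $(\partial_I^-J_n)\oplus\mathsf C\subset (J_n\oplus\mathsf C)(\diam(I')+\sqrt d)$: one must check that a point obtained by adding a cube-vector to a point of the internal boundary is still close to the boundary of the \emph{enlarged} body $J_n\oplus\mathsf C$, not merely close to $\partial J_n$. This is where the two error terms $\diam(I')$ and $\sqrt d$ combine additively, and getting the constant exactly $\diam(I')+\sqrt d$ rather than something larger requires tracking the distances carefully (using that if $u+v\notin J_n$ with $\|v\|\le\diam(I')$, then $u+w\notin J_n\oplus\mathsf C$ fails in a controlled way, or equivalently bounding $d(u+c,\partial(J_n\oplus\mathsf C))$ for $c\in\mathsf C$). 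An alternative, perhaps cleaner, route that avoids this is to combine Lemma \ref{jama} and Lemma \ref{prof} to replace $\sharp\underline{\partial_I^-J_n}$ by $\sharp\underline{\partial_I^+\mathsf J_n}\le\sharp\underline{\partial_I^+J_n}$ and then invoke the rough estimate Lemma \ref{grg}; but since Lemma \ref{grg} is only stated for the dilation sequence $nO$, the direct collar argument above is the more robust choice for a general convex exhaustion.
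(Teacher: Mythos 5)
Your proposal is correct and takes essentially the same route as the paper: bound $\sharp\underline{\partial_I^-J_n}$ by $V\bigl((\partial_I^-J_n)\oplus\mathsf C\bigr)$ via (\ref{fac}), observe that this set lies in the collar $(J_n\oplus\mathsf C)\bigl(\diam(I')+\sqrt d\bigr)$, apply Lemma \ref{zut} to $J_n\oplus\mathsf C$, and use Lemma \ref{nul} to get $p(J_n\oplus\mathsf C)\sim^n p(J_n)$. The containment you single out as the delicate step is exactly the one the paper also asserts (with $c=\diam(I')+\diam(\mathsf C)$) without further detail, so your argument matches the paper in both structure and level of rigor.
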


\begin{proof}
 As already observed, we have $\sharp \underline{\partial^-J_n}\leq V(\partial^-J_n\oplus\mathsf C)$ with $\mathsf C=[0,1]^d$. 
 Let $(J'_n)_n$ be the sequence given by $J'_n=J_n\oplus C$ for all $n$. By Lemma \ref{nul} this sequence is a convex exhaustion with $p(J'_n)
 \sim^n p(J_n)$.  Moreover $ \partial^-J_n\oplus C$  is contained in $J'_n\left( c \right)$  with $c=\diam(I')+\diam(\mathsf C)$. Therefore we conclude according to 
 Lemma \ref{zut}  :
 \begin{align*}
 \sharp \underline{\partial^-J_n}&\leq V\left(J'_n(c)\right),\\
 &\leq c p(J'_n),\\
 &\lesssim^n cp(J_n).
 \end{align*}
 
\end{proof}

\subsection{Fine estimate of $\sharp\underline{\partial_I^\pm J_n}$ for general convex exhaustions $(J_n)_n$ in dimension $2$}
We compare directly the cardinality of lattice points in the morphological boundary with the first $I$-relative quermass integral  of $J_\infty$ for two-dimensional convex exhaustion. This result will not be used directly in the next sections but is potentially of independent interest.

\begin{prop}\label{fdf}
For any   convex exhaustion $( J_n)_n$ in $\mathbb R^2$, we have 
 $$\lim_n\frac{\sharp \underline{\partial_I ^{-} J_n} }{p(J_n)}= V_I(J_\infty).$$
\end{prop}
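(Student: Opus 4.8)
The strategy is to compare $\sharp\underline{\partial_I^- J_n}$ with $\sharp\underline{\partial_{\cv(I)}^- J_n}$, $\sharp\underline{\partial_{\cv(I)}^+ J_n}$, and then to pass to the limit using the fact that $\tilde J_n = p(J_n)^{-1}J_n \to J_\infty$ in the Hausdorff topology. First I would reduce to $I$ a convex polygon: since $\partial_I^\pm J = \partial_{\cv(I)}^\pm J$ for $J$ a convex domain (by the morphological identities recalled in Section \ref{conv}, noting $I$ integral so $\cv(I)$ is an integral polygon), we may assume $I$ is an integral convex polygon from the outset. Similarly, by Lemma \ref{jama} and Remark \ref{integr}, replacing $J_n$ by $\mathsf J_n = \cv(\underline{J_n})$ changes neither $\underline{\partial^- J_n}$ nor the asymptotics of $p(J_n)$, so we may assume each $J_n$ is an integral convex polygon. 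By Lemma \ref{prof} we already have $\sharp\underline{\partial^- J_n} \le \sharp\underline{\partial^+ J_n}$; the reverse comparison is the crux.

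The main step is to show $\limsup_n \frac{\sharp\underline{\partial^+ J_n} - \sharp\underline{\partial^- J_n}}{p(J_n)} \le 0$ (so that both ratios have the same limit, if it exists), and to identify that common limit as $V_I(J_\infty)$. For the first part, I would revisit the injective map $\phi \colon \partial^- J_n \to \partial^+ J_n$ built face-by-face in the proof of Lemma \ref{prof}: its failure to be surjective is localized near the vertices of $J_n$, i.e. the points of $\underline{\partial^+ J_n}$ missed by $\phi$ lie within bounded distance (depending only on $\diam(I')$) of the extreme points $\ex(J_n)$. In dimension $2$, a convex polygon with $p(J_n)$-sized perimeter sitting $\mathrm{Hausdorff}$-close (after rescaling) to the fixed polygon $J_\infty$ has a number of vertices that is $o(p(J_n))$ — here I would use that $\tilde J_n \to J_\infty$ forces the ``angular defect'' to concentrate, so that all but $o(p(J_n))$ of the perimeter consists of edges whose directions are within $\epsilon$ of the finitely many edge-directions of $J_\infty$; the vertices where direction changes by a non-negligible amount number $O(1/\epsilon)$, and the rest contribute total turning $\le 2\pi$, hence $o(p(J_n))$ of them are ``significant''. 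Summing the bounded local contributions over $o(p(J_n))$ vertices gives the claim. (This is the planar input that makes the sharp statement possible and is the step I expect to be the main obstacle — controlling the vertex count and the near-vertex discrepancy simultaneously.)

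For the identification of the limit, I would run the Gauss-type estimate locally on each edge. For an edge $F$ of $J_n$ with outer normal $N^F$ and the corresponding support vertex $u^F \in \ex(I)$, the lattice points of $\partial^-_I J_n$ near $F$ fill the slab $T_F^+ J_n(-h_I(N^F)) \cap J_n$ of width $h_I(N^F)$ based on $F$, and a one-dimensional Gauss count along the edge direction shows their number is $(\text{length of }F)\cdot h_I(N^F) + O(1)$ per edge, with the $O(1)$ absorbed into the $o(p(J_n))$ vertex correction already controlled above. Summing over edges gives $\sharp\underline{\partial^- J_n} = \sum_{F} |F|\, h_I(N^F) + o(p(J_n)) = \int_{\mathbb S^2} h_I\, d\sigma_{J_n} + o(p(J_n)) = V_I(J_n) + o(p(J_n))$. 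Finally, by the weak convergence $\sigma_{J_n}/p(J_n) \to \sigma_{J_\infty}$ (equivalently $\sigma_{\tilde J_n} \to \sigma_{J_\infty}$, recalled from Proposition 10.2 in \cite{Gru}) together with the continuity of $V_I$ under Hausdorff convergence noted after Corollary \ref{reff}, we get $\frac{V_I(J_n)}{p(J_n)} = V_I(\tilde J_n) \to V_I(J_\infty)$, which yields $\lim_n \frac{\sharp\underline{\partial_I^- J_n}}{p(J_n)} = V_I(J_\infty)$ as desired. I would also remark that the same argument handles $\partial_I^+ J_n$, giving the matching statement for the external boundary.
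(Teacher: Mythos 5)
Your overall architecture is the same as the paper's: reduce to integral exhaustions via Lemma \ref{jama} and Remark \ref{integr}, count lattice points edge by edge in semi-open slabs of height $h_I(N^F)$ (the content of Fact \ref{hg} and Lemma \ref{ggo}), sandwich the internal against the external boundary with Lemma \ref{prof}, and pass to the limit by weak convergence of the area measures $\sigma_{\tilde J_n}\to\sigma_{J_\infty}$. The genuine gap is precisely in the step you flag as the main obstacle: the claim that the number of vertices of $J_n$ is $o(p(J_n))$. As you argue it, it fails twice over. First, $J_\infty$ is an arbitrary convex domain of unit perimeter (a disc, say), not a polygon with finitely many edge directions, so there is no finite set of directions for the edges of $J_n$ to cluster around. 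Second, even when $J_\infty$ is a polygon, Hausdorff closeness of $\tilde J_n$ to $J_\infty$ puts no bound at all on the vertex count: each edge can be wiggled into arbitrarily many nearly collinear segments without moving the body. The total-turning-equals-$2\pi$ argument only bounds the number of vertices whose turning exceeds a fixed threshold; the remaining ``insignificant'' vertices can be arbitrarily numerous, and each of them, however shallow its angle, can still contribute a discrepancy of order $1$ lattice point (the correction region at a vertex sits in a ball of radius comparable to $\diam(I')$, and a sliver of tiny area may still contain a lattice point), so they cannot be discarded. What makes the count $o(p(J_n))$ true is integrality, which you invoke in the reduction but never use afterwards: integral edges of length at most $L$ fall into finitely many translation classes, each occurring at most twice on a convex polygon, so $\sharp\mathcal F(\mathsf J_n)\leq C(L)+p(\mathsf J_n)/L$; this is exactly the paper's Lemma \ref{pepe}. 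Without this lattice input your error term is $O\left(\sharp\mathcal F(J_n)\right)$ with no control, and the proof does not close.

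Two smaller points. The uniform per-endpoint correction (your ``bounded distance depending only on $\diam(I')$'') is not automatic for the internal boundary: whether the inward slab on an edge stays inside $J_n$ near its endpoints, i.e. the size of the shrinking parameter $\epsilon^-$ in Fact \ref{hg}, depends on a lower bound for the interior angles of $J_n$ that is uniform in $n$; this is the paper's Lemma \ref{anglee}, which again uses the limit shape (nonempty interior of $\mathsf J_\infty$) and needs to be stated. Also, the ``one-dimensional Gauss count'' giving $|F|\,h_I(N^F)+O(1)$ per edge is legitimate only because the slabs are semi-open \emph{integral} rectangles (integer endpoints of the basis, height $u^F\cdot N^F$ with $u^F\in\mathbb Z^2$), for which the count equals the area exactly (Lemma \ref{ggo}); for a generic rational edge direction the naive line-by-line count carries an error proportional to the length of the primitive edge vector, which would wreck the $o(p(J_n))$ bookkeeping. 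Finally, your treatment of $\partial^+_I J_n$ via the non-surjectivity of the map $\phi$ of Lemma \ref{prof} is a reasonable variant of the paper's argument (which instead bounds $\sharp\underline{\partial^+\mathsf J_n}$ from above directly by the outer rectangles and sandwiches with Lemma \ref{prof}), but the localization of the missed points near vertices would itself require the angle bound, and in any case it reduces again to the vertex count you have not established.
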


 By  Remark \ref{integr} and  Lemma \ref{jama} we only need to consider integral convex exhaustions. In fact  in this case we  also show the corresponding statement for the external morphological boundary.

\begin{prop}\label{fdff}
For any integral  convex exhaustion $(\mathsf J_n)_n$ in $\mathbb R^2$, we have 
 $$\lim_n\frac{\sharp \underline{\partial_I ^{\pm}\mathsf  J_n} }{p(\mathsf J_n)}= V_I(\mathsf J_\infty).$$
\end{prop}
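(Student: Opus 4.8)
The plan is to prove the statement for integral convex exhaustions $(\mathsf J_n)_n$ in $\mathbb R^2$ by reducing the counting of lattice points in the morphological boundary to an edge‑by‑edge computation, and then to recognize the resulting sum as a Riemann‑type approximation of the integral $V_I(\mathsf J_\infty)=\int_{\mathbb S^2}h_I\,d\sigma_{\mathsf J_\infty}$. First I would write $\mathsf J_n$ as a convex lattice polygon with edges $E_1^n,\dots,E_{k_n}^n$, outer normals $N^{E_j^n}\in\mathbb S^2$ (here $\mathbb S^2$ is the unit circle of $\mathbb R^2$), and edge vectors of integer length $\ell_j^n:=\sharp(\underline{E_j^n})-1$. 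The key combinatorial observation in dimension two is that, up to a bounded error coming from the vertices (there are at most $O(1)$ lattice points near each vertex within distance $\diam(I')$, and the number of vertices is itself $o(p(\mathsf J_n))$ by the standard bound on the number of edges of a lattice polygon of perimeter $p$, namely $O(p^{2/3})$), the internal (resp. external) morphological boundary $\partial_I^-\mathsf J_n$ (resp. $\partial_I^+\mathsf J_n$) decomposes along each edge $E_j^n$ into a prism $T_{E_j^n}^+\mathsf J_n(-h_I(N^{E_j^n}))\cap T_{E_j^n}^-\mathsf J_n$, whose lattice‑point count is $\ell_j^n\, h_I(N^{E_j^n}) + o(\ell_j^n)+O(1)$ — the width of the slab in the normal direction being exactly $h_I(N^{E_j^n})$ by the description $\mathsf J\ominus I=\bigcap_x T_x^-J(h_I(-N^J(x)))$ recalled in Section 2, and the lattice count in a long thin slab along a rational direction being asymptotic to length $\times$ width.

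The second step is to control the overlaps between the slabs attached to adjacent edges, so that the decomposition is essentially a partition: two such slabs meet only in a neighborhood of a common vertex of diameter $O(\diam(I'))$, contributing $O(1)$ lattice points per vertex, hence $o(p(\mathsf J_n))$ in total. Here I would use, for the injective lower bound, the sliding map $\phi$ of Lemma \ref{prof} (which already realizes $\partial^-\subset\partial^+$ edge by edge via translation by the vertex $u^F\in\ex(I)$ with $h_I(N^F)=u^F\cdot N^F$), and for the upper bound the inclusion $\partial^-J\subset\bigcup_{F}T_F^+J(-h_I(N^F))$ together with Lemma \ref{zut}‑type prism volume estimates. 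Summing over edges I obtain
\[
\sharp\underline{\partial_I^\pm\mathsf J_n}=\sum_{j=1}^{k_n}\ell_j^n\,h_I\!\left(N^{E_j^n}\right)+o\!\left(p(\mathsf J_n)\right).
\]

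The third and final step is to recognize the sum $\sum_j \ell_j^n h_I(N^{E_j^n})$ as $\int_{\mathbb S^2}h_I\,d\sigma_{\mathsf J_n}=V_I(\mathsf J_n)$, since in dimension two the area measure $\sigma_{\mathsf J_n}=N^{\mathsf J_n}_*\mathcal H_1$ of a polygon is exactly the atomic measure $\sum_j |E_j^n|\,\delta_{N^{E_j^n}}$, and $\ell_j^n=|E_j^n|+O(1)$ with $\sum_j O(1)=o(p(\mathsf J_n))$. Dividing by $p(\mathsf J_n)$ and using that $\tilde{\mathsf J_n}=p(\mathsf J_n)^{-1}\mathsf J_n\to\mathsf J_\infty$ in the Hausdorff topology — whence $\sigma_{\tilde{\mathsf J_n}}\to\sigma_{\mathsf J_\infty}$ weakly (Proposition 10.2 in \cite{Gru}) and $V_I$ is continuous for the Hausdorff topology as noted after Proposition \ref{voll} — together with the scaling $V_I(p(\mathsf J_n)^{-1}\mathsf J_n)=p(\mathsf J_n)^{-1}V_I(\mathsf J_n)$, gives $\sharp\underline{\partial_I^\pm\mathsf J_n}/p(\mathsf J_n)\to V_I(\mathsf J_\infty)$. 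Proposition \ref{fdf} then follows from Remark \ref{integr} and Lemma \ref{jama}, which reduce the general convex exhaustion $(J_n)_n$ to its integral polytope approximation $(\mathsf J_n)_n$ without changing either the internal‑boundary lattice count or the limit $J_\infty$. The main obstacle I anticipate is the second step: controlling the slab overlaps and the per‑edge error terms uniformly, since an a priori general convex exhaustion in $\mathbb R^2$ may have edges of wildly varying (possibly bounded) length, so the $o(\ell_j^n)$ error in each slab must be summed carefully against $\sum_j \ell_j^n=p(\mathsf J_n)+o(p(\mathsf J_n))$ — this is exactly where the two‑dimensional bound $k_n=O(p(\mathsf J_n)^{2/3})$ on the number of edges of a lattice polygon is essential and why the result is stated only for $d=2$.
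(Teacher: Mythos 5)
Your plan follows the same route as the paper's proof: decompose $\partial_I^{\pm}\mathsf J_n$ edge by edge into rectangles of height $h_I(N^F)$, count lattice points rectangle by rectangle, absorb the vertex contributions using the fact that the number of edges is negligible compared with $p(\mathsf J_n)$, identify the resulting sum with $\int_{\mathbb S^d} h_I\,d\sigma_{\mathsf J_n}$, and conclude by weak convergence of area measures together with Lemma \ref{jama} and Remark \ref{integr} for the reduction of Proposition \ref{fdf}. However, two steps are genuinely flawed as written. First, you set $\ell_j^n:=\sharp\underline{E_j^n}-1$ (the lattice length of the edge) but later assert $\ell_j^n=|E_j^n|+O(1)$; this is false whenever the primitive vector of the edge does not have unit Euclidean norm. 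For the diamond exhaustion $\mathsf J_n=\cv\{(\pm n,0),(0,\pm n)\}$ one has $\ell_j^n=|E_j^n|/\sqrt2$ on every edge, so with your definitions either your displayed per-edge count or your identification of $\sum_j\ell_j^n h_I(N^{E_j^n})$ with $V_I(\mathsf J_n)+o\left(p(\mathsf J_n)\right)$ is off by a factor $\sqrt2$, and the pipeline outputs $V_I(\mathsf J_\infty)/\sqrt 2$. Relatedly, the justification of the per-edge count by ``the lattice count of a long thin slab is asymptotic to length times width'' is insufficient: an edge of an integral polygon may consist of a single primitive vector of huge Euclidean length (such edges are the typical case, since $\sharp\mathcal F(\mathsf J_n)=o\left(p(\mathsf J_n)\right)$), and then the naive line-by-line error analysis produces an error of the same order as the main term. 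The paper's way around this is the exact identity $\sharp\underline{R}=V(R)$ for semi-open integral rectangles (Lemma \ref{ggo}, via an $SL_2(\mathbb Z)$ reduction), combined with the trimming estimate (\ref{bb}); you need this exact count, stated with Euclidean lengths throughout, not an asymptotic.

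Second, your overlap control near vertices (``two slabs meet only in a neighborhood of a common vertex of diameter $O(\diam(I'))$, hence $O(1)$ lattice points per vertex'') fails without a lower bound on the interior angles: at a vertex of interior angle $\theta$, the inner rectangles over the two adjacent edges overlap, and stick out of $\mathsf J_n$, along a region of length of order $h_I/\sin\theta$, which is unbounded as $\theta\to 0$ and may contain far more than $O(1)$ lattice points. This is exactly why the paper trims each rectangle by an amount $\epsilon^-(I,\angle J)$ depending on the minimal angle (Fact \ref{hg}) and proves separately that $\liminf_n\angle\mathsf J_n>0$ for an integral convex exhaustion (Lemma \ref{anglee}, a consequence of Hausdorff convergence of $\tilde{\mathsf J_n}$ to a body with nonempty interior). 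Your appeal to the bound $k_n=O\left(p(\mathsf J_n)^{2/3}\right)$ on the number of edges is correct (and stronger than the paper's Lemma \ref{pepe}), but it addresses neither of these two points. Both gaps are repairable — work with Euclidean edge lengths, use the exact rectangle count, and add the angle lemma — at which point your argument becomes essentially the paper's.
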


The rest of  this subsection is devoted to the proof of Proposition \ref{fdff}. We start by giving some preliminary lemmas.\\

We denote by $\angle P$ the minimum of the interior angles at the vertices of  a convex polygon $P\subset \mathbb R^2$. 

\begin{lemma}\label{anglee}
For any integral  convex exhaustion $(\mathsf J_n)_n$ in $\mathbb R^2$, we have 
$$\liminf_n \angle \mathsf  J_n >0.$$
\end{lemma}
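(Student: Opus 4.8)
\textbf{Proof strategy for Lemma \ref{anglee}.}

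The plan is to argue by contradiction: suppose along some subsequence $\angle \mathsf J_{n_k} \to 0$, and show this forces $\tilde{\mathsf J}_{n_k} = p(\mathsf J_{n_k})^{-1}\mathsf J_{n_k}$ to degenerate, contradicting convergence to the convex domain $\mathsf J_\infty$, which has nonempty interior and hence bounded-away-from-zero interior angles in an appropriate sense. The key geometric input is that the vertices of $\mathsf J_n$ are integer points, so two consecutive edges meeting at a vertex $v$ with a very small interior angle $\theta$ cannot both be short: if $v$, $v'$, $v''$ are three consecutive vertices, the lattice vectors $v'-v$ and $v''-v$ span a parallelogram of area at least $1$ (it is a nonzero integer, since $v',v,v''$ are not collinear — they are distinct vertices of a convex polygon), so $\|v'-v\|\,\|v''-v\|\,\sin\theta \ge 1$. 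Hence at a vertex with angle $\theta\to 0$ at least one of the two incident edges has length $\gtrsim \theta^{-1/2}$.

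First I would fix the contradiction hypothesis and pass to a subsequence so that $\theta_n := \angle \mathsf J_n \to 0$, realized at a vertex $v_n$ with consecutive vertices $v_n', v_n''$; by the area estimate above, $\max(\|v_n'-v_n\|, \|v_n''-v_n\|) \ge \theta_n^{-1/2} \to \infty$, say it is $\|v_n'-v_n\|$. Next I would get a lower bound on the perimeter and an upper bound relating the rescaled polygon to its limit. Since $\tilde{\mathsf J}_n \to \mathsf J_\infty$ in the Hausdorff topology and $p(\tilde{\mathsf J}_n)=1$, the polygons $\tilde{\mathsf J}_n$ lie in a fixed bounded region, so their diameters are uniformly bounded: $\operatorname{diam}(\mathsf J_n) \le M\,p(\mathsf J_n)$ for some constant $M$. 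In particular $\|v_n'-v_n\| \le M p(\mathsf J_n)$, which already shows $p(\mathsf J_n) \gtrsim \theta_n^{-1/2}$ — consistent, not yet a contradiction, so I need the angle itself to conflict with the limit.

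The decisive step is to localize near the vertex. Consider the rescaled picture around $v_n$: the edge $[v_n, v_n']$ rescaled by $p(\mathsf J_n)^{-1}$ has length in $[\theta_n^{-1/2} p(\mathsf J_n)^{-1}, M]$, and the whole convex polygon $\tilde{\mathsf J}_n$ is contained in the wedge of half-angle $\theta_n/2$... more precisely, $\tilde{\mathsf J}_n$ is contained in the cone at $\tilde v_n := p(\mathsf J_n)^{-1} v_n$ spanned by the two incident edge directions, whose opening is $\theta_n$. Since one incident edge has rescaled length bounded below by $\theta_n^{-1/2} p(\mathsf J_n)^{-1}$, I consider two cases. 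If $p(\mathsf J_n) = O(\theta_n^{-1/2})$, then that rescaled edge has length bounded below by a positive constant, while the full polygon sits in a cone of opening $\theta_n \to 0$ at $\tilde v_n$: the polygon then has diameter at least that constant but "width" across the cone going to $0$, so its Hausdorff limit would be contained in a line segment through the limit of $\tilde v_n$ — contradicting that $\mathsf J_\infty$ is a convex domain (it contains $0$ in its interior, hence is not contained in any line). If instead $p(\mathsf J_n)\,\theta_n^{1/2} \to \infty$, I would instead use the other vertex-area estimates: in fact every interior angle of $\mathsf J_n$ is realized at a lattice vertex, so I can run the same area argument at \emph{every} vertex, and the condition $p(\tilde{\mathsf J}_n)=1$ with diam bounded forces the number of vertices and their geometry to stay controlled — the cleanest route is: a lattice convex polygon contained in a ball of radius $R p(\mathsf J_n)$ has at most $O\big((R p(\mathsf J_n))^{2/3}\big)$ vertices (Andrews' theorem), and if the minimal angle $\to 0$ one shows, summing the "defect" $\pi - (\text{angle})$ over vertices equals $2\pi$ (exterior angle sum), that... this is getting heavy, so in the writeup I would instead just note that Case 1 is the generic obstruction and that Case 2 cannot occur because one can always *choose* the worst vertex so that the longer incident edge dominates the perimeter up to a constant factor: since $p(\mathsf J_n) = \sum_{\text{edges}} (\text{edge length})$ and at the minimal-angle vertex one edge has length $\gtrsim \theta_n^{-1/2}$, either $p(\mathsf J_n) \asymp \theta_n^{-1/2}$ (Case 1 applies) or there are edges much longer still elsewhere, but then repeating the analysis at the vertices of a longest edge, whose incident angles are bounded below by $\theta_n$, gives a long thin protrusion, again forcing $\tilde{\mathsf J}_n$ toward a segment in the limit.

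\textbf{Main obstacle.} The hard part is Case 2, i.e.\ ruling out the scenario where the minimal angle goes to zero but the perimeter grows much faster than $\theta_n^{-1/2}$, so that no single edge is macroscopically long after rescaling. I expect the clean fix is to observe that the exterior angles sum to $2\pi$, so a vanishing minimal interior angle contributes an exterior angle $> \pi - \theta_n$, i.e.\ nearly $\pi$, of "turning" concentrated at a single vertex; rescaling, the polygon $\tilde{\mathsf J}_n$ makes a sharp cusp-like turn at $\tilde v_n$, and combined with Hausdorff convergence to the \emph{smooth-enough} (at least: full-dimensional) convex body $\mathsf J_\infty$ one derives a contradiction with $\sigma_{\tilde{\mathsf J}_n} \to \sigma_{\mathsf J_\infty}$ (weak convergence of area measures, as recalled in Section \ref{conv}): a near-$\pi$ exterior angle at one vertex cannot persist in the limit unless $\sigma_{\mathsf J_\infty}$ has an atom of mass $\ge \pi$, forcing $\mathsf J_\infty$ to have a straight edge carrying half its perimeter — still compatible with being a convex domain, so one in fact needs the \emph{integrality} once more to say two such vertices cannot be too close, and this packing argument is where the real work lies.
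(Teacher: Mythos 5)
There is a genuine gap: your own writeup identifies it. The whole argument hinges on a case split according to whether $p(\mathsf J_n)$ is comparable to $\theta_n^{-1/2}$, and your ``Case 2'' (minimal angle $\to 0$ but perimeter much larger than $\theta_n^{-1/2}$, so no single edge is macroscopic after rescaling) is never closed: the proposed fixes (exterior-angle sums, Andrews' theorem, atoms of the area measure, a final ``packing argument'') are sketched, partly acknowledged not to yield a contradiction by themselves (a near-$\pi$ exterior angle in the limit is compatible with $\mathsf J_\infty$ being a convex domain), and left as ``where the real work lies''. As it stands this is not a proof.

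The deeper issue is that the case split, and the lattice-area estimate $\|v'-v\|\,\|v''-v\|\sin\theta\ge 1$ that motivates it, are unnecessary; integrality plays no role in this lemma. The observation already contained in your Case 1 suffices in full generality: $\angle\tilde{\mathsf J}_n=\angle\mathsf J_n$, the rescaled polygon $\tilde{\mathsf J}_n$ has perimeter $1$, hence diameter at most $1/2$, and by convexity it is entirely contained in the tangent wedge of opening $\theta_n$ at the bad vertex $\tilde v_n\in\tilde{\mathsf J}_n$. You do not need a long rescaled edge to conclude: every point of $\tilde{\mathsf J}_n$ lies within distance $\tfrac12\sin\theta_n$ of a ray through $\tilde v_n$, so if $\theta_n\to 0$ along a subsequence the Hausdorff limit is contained in a segment; equivalently, since $\tilde{\mathsf J}_n\to\mathsf J_\infty$ and $\mathsf J_\infty$ contains a ball $B(0,r)$ (it is a convex domain, by Remark \ref{integr}), for large $n$ the wedge must contain $B(0,r/2)$, forcing $\theta_n\ge 2\arcsin(r)$, a contradiction. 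This is exactly the paper's one-line proof: the minimal angle is invariant under the rescaling and lower semicontinuous for the Hausdorff topology, so $\liminf_n\angle\mathsf J_n\ge\angle\mathsf J_\infty>0$ because $\mathsf J_\infty$ has nonempty interior. Dropping your case analysis and running this wedge-versus-ball argument for the whole (bounded-diameter) rescaled polygon turns your sketch into a complete and much shorter proof.
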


\begin{proof}
We have $\angle \tilde{\mathsf J_n}=\angle \mathsf J_n$. Moreover the minimal angle is lower semi-continuous for the Hausdorff topology, therefore $\liminf_n\angle \tilde{\mathsf J_n}\geq \angle\mathsf  J_\infty$. Since $\mathsf J_\infty$ has non-empty interior, we have $\angle \mathsf  J_\infty>0$. 
\end{proof}

\begin{lemma}\label{pepe}
For any integral  convex exhaustion $(\mathsf J_n)_n$ in $\mathbb R^2$, we have $$\sharp \mathcal F(\mathsf J_n)=o\left(p(\mathsf J_n)\right).$$
\end{lemma}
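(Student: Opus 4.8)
The plan is to bound the number of faces (edges) of $\mathsf J_n$ by exploiting the integrality of the vertices together with the bounded geometry coming from Lemma \ref{anglee}. The key observation is that a convex polygon whose vertices lie in $\mathbb Z^2$ cannot have too many edges relative to its perimeter, because each edge has length at least $1$ except possibly for a controlled number of "short" edges. More precisely, if $v_0, v_1, \dots, v_{N-1}, v_N = v_0$ are the consecutive vertices of $\mathsf J_n$, then each edge vector $v_{k+1} - v_k$ is a nonzero vector of $\mathbb Z^2$, hence has Euclidean length $\geq 1$. This immediately gives $\sharp \mathcal F(\mathsf J_n) = N \leq p(\mathsf J_n)$, which is the wrong bound — it is only $O(p)$, not $o(p)$. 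So the naive length-$\geq 1$ bound is not enough, and the actual content must come from the fact that only finitely many directions in $\mathbb Z^2$ have small norm, combined with the minimal-angle control.

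Here is the refined argument I would carry out. Fix $\theta_0 = \liminf_n \angle \mathsf J_n > 0$ from Lemma \ref{anglee}, so that for $n$ large every interior angle of $\mathsf J_n$ is $\geq \theta_0/2$. This means the exterior angles (the turning angles of the edge directions) are each $\leq \pi - \theta_0/2$, but more usefully their sum is $2\pi$; hence among the $N$ edge directions, the number of them that can be "clustered" is limited. The real point: an edge of $\mathsf J_n$ of Euclidean length $< L$ must be an integer vector of norm $< L$, and there are only $O(L^2)$ such vectors; moreover, by convexity the edge directions are strictly monotone (cyclically ordered), so each primitive integer direction of norm $< L$ is used by at most a bounded number of edges — in fact, for a convex polygon, each direction vector in $\mathbb Z^2$ up to positive scaling appears at most once as an edge direction. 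Therefore the number of edges of length $< L$ is at most $\sharp\{v \in \mathbb Z^2 : 0 < \|v\| < L\} = O(L^2)$, while the number of edges of length $\geq L$ is at most $p(\mathsf J_n)/L$. Combining, $\sharp \mathcal F(\mathsf J_n) \leq C L^2 + p(\mathsf J_n)/L$ for every $L > 0$. Choosing $L = L(n) \to \infty$ slowly, e.g. $L = p(\mathsf J_n)^{1/3}$, yields $\sharp \mathcal F(\mathsf J_n) \leq C\, p(\mathsf J_n)^{2/3} + p(\mathsf J_n)^{2/3} = o(p(\mathsf J_n))$, which is the desired estimate. (In fact this reproves the classical fact, due to Andrews, that a lattice convex polygon of perimeter $p$ has $O(p^{2/3})$ vertices, though we only need the $o(p)$ statement and the minimal-angle hypothesis from Lemma \ref{anglee} is not even strictly necessary for this weaker conclusion — it is presumably recorded here because the sharper estimates in the surrounding propositions use it.)

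The main obstacle is the claim that in a convex lattice polygon each primitive integer direction occurs at most once as an edge direction, and hence that the count of short edges is genuinely $O(L^2)$ rather than just being bounded by the total edge count. This is where convexity is essential: the outward normals (equivalently the edge directions) of a convex polygon, read cyclically, are strictly increasing in angle, so no direction repeats; then an edge of length $< L$ in direction $v/\|v\|$ with $v \in \mathbb Z^2$ primitive forces $\|v\| < L$ via primitivity (the shortest lattice vector in a given rational direction is the primitive one). Summing $\sum_{v \text{ primitive}, \|v\| < L} \|v\| = O(L^3)$ is a red herring — what we want is the count, which is $O(L^2)$. Once this is in place the rest is the elementary dichotomy "short edges few in number, long edges few because perimeter is finite" and the optimization over $L$. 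I would write the proof in roughly this shape: (1) invoke Lemma \ref{anglee} only as needed / or dispense with it; (2) state the strict cyclic monotonicity of edge directions from convexity; (3) bound short-edge count by lattice point count; (4) bound long-edge count by $p/L$; (5) optimize $L$.
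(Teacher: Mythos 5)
Your proof is correct and takes essentially the same route as the paper's: split the faces into short edges (length $<L$), whose number is bounded independently of $n$ because they are lattice segments with boundedly many translation/direction classes each occurring with bounded multiplicity in a convex polygon, and long edges, of which there are at most $p(\mathsf J_n)/L$, then let $L\to\infty$. Your only departures are cosmetic — you optimize $L\sim p^{1/3}$ to get the sharper $O(p^{2/3})$ bound, and you correctly observe that Lemma~\ref{anglee} is not needed, which matches the paper, whose proof of this lemma does not invoke it either.
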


\begin{proof}
Two  integral polytopes are said equivalent when there is a translation (necessarily by an integer) mapping one to the other. 
For any $L$ the number $a_L$ of equivalence classes of integral $1$-polytopes with $1$-Hausdorff measure less than $L$ is finite (these polytopes are just line segments with integral endpoints and their  $1$-Hausdorff measure is just equal to their length). Moreover for a  integral convex polytope  there are at most two faces in the same class. Therefore 
\begin{align*}
\sharp \mathcal F(\mathsf J_n)&\leq 2a_L+\sharp\{F\in \mathcal  F(\mathsf J_n), \ \mathcal{H} _{1}(F)\geq L \},\\
&\leq 2a_L+\frac{p(\mathsf J_n)}{L}.
\end{align*}
This inequality holds for all $n$ and  $p(\mathsf J_n)$ goes to infinity with $n$ so that we conclude $\sharp \mathcal F(\mathsf J_n)=o\left(p(\mathsf J_n)\right)$ as $L$ was arbitrarily fixed. 
\end{proof}

 Given two distinct points $A,B$ in $\mathbb R^2$ and $h\neq 0$, the rectangle $R_{AB}(h)$ of basis $AB$ and height $h>0$ (resp. $h<0$) is the semi-open rectangle $[AB[\times [A,D[$ oriented as  $ABCD$ (resp. $ADCB$)\footnote{We  denote a convex polytope with its vertices by respecting the usual orientation of the plane.} with $|AD|=|h|$. This rectangle is said integral when $A,B$ belong to $ \mathbb Z^2$ and the line $(CD)$ has a non-empty intersection with $\mathbb Z^2$. 


\begin{lemma}\label{ggo} For any integral rectangle $R$, 
$$\sharp \underline{R} =V(R).$$
\end{lemma}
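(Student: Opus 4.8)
The plan is to prove the lemma by an elementary counting argument based on a lattice-adapted tiling of the rectangle. Let $R=R_{AB}(h)$ be an integral rectangle, so $A,B\in\mathbb Z^2$ and the line $(CD)$ (the side parallel to $AB$ at distance $|h|$) contains an integer point. Set $v=B-A\in\mathbb Z^2\setminus\{0\}$; since $v$ is an integer vector we may assume it is primitive (otherwise first prove the claim for the primitive sub-rectangle and stack translates — but in fact the argument below handles the general case directly). Let $w$ be the vector $A\mapsto D$, i.e. $w=\frac{h}{|v|}v^{\perp}$ where $v^{\perp}$ is a unit normal; the integrality hypothesis on $(CD)$ says precisely that $w$ differs from an integer vector by an integer multiple of $v$, equivalently that there is an integer vector $w'$ with $w'\equiv w \pmod{\mathbb R v}$ lying on the line $(CD)$.

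First I would reduce to the case where $R$ is a fundamental-domain-like parallelogram. The key observation is that $R=[A,B[\,\times[A,D[$ is a half-open parallelogram spanned by $v$ and $w$, with $v\in\mathbb Z^2$ primitive and the opposite side $(CD)$ meeting $\mathbb Z^2$. I claim the columns $\{A+tv+sw' : t\in[0,1)\}$ for the relevant integer shifts, together with translation by $v$, tile $R$ and group its lattice points into equal-size fibers. Concretely: project $\mathbb R^2$ onto the line $\mathbb R v^{\perp}$ along $\mathbb R v$; the image of $R$ is a segment of length $|h|$, and the lattice $\mathbb Z^2$ projects to a rank-one lattice $\Lambda$ in $\mathbb R v^{\perp}$ whose covolume $\delta$ satisfies $\delta\cdot|v| = |\det(v, e)|$ for a suitable generator — in fact $\delta = 1/|v|$ when $v$ is primitive, by the standard fact that a primitive vector extends to a basis of $\mathbb Z^2$. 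The integrality of the far side means the endpoints of the projected segment both lie in $\Lambda$, so the segment of length $|h|$ contains exactly $|h|/\delta = |h|\cdot|v|$ points of $\Lambda$ counted half-openly. Over each such point the fiber inside $R$ is a half-open segment of the form $\{x_0 + tv : t\in[0,1)\}$ with $x_0\in\mathbb Z^2$, which contains exactly one lattice point. Hence $\sharp\underline R = |h|\cdot|v| = |AB|\cdot|h| = V(R)$.

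The main obstacle is getting the boundary bookkeeping exactly right so that the half-open conventions in the definition of $R_{AB}(h)$ (the side $[A,B[$ closed at $A$, and $[A,D[$ closed at $A$) produce exactly $V(R)$ rather than $V(R)$ off by a term supported on the boundary — this is where the precise "integral rectangle" hypothesis is used and must be matched to the precise half-open convention. I would handle this by the projection argument above: half-openness in the $v$-direction makes each column contribute exactly one point (a half-open unit-speed segment along a primitive integer vector has exactly one lattice point), and half-openness in the $v^{\perp}$-direction, combined with both endpoints of the projected segment lying in $\Lambda$, makes the number of relevant columns exactly $|h|\cdot|v|$ with no boundary ambiguity. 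If $v$ is not primitive, write $v=k v_0$ with $v_0$ primitive and $k\in\mathbb N$; then $R$ decomposes into $k$ horizontal translates of $R_{A,A+v_0}(h)$ (half-open, stacked along $v$), each containing $|h|\cdot|v_0|$ lattice points, giving $k|h||v_0| = |h||v| = V(R)$ again. This case split, plus the elementary lemma that a primitive integer vector extends to a lattice basis, are the only inputs needed.
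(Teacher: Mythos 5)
Your argument is correct, and it takes a different route from the paper. The paper's proof first translates so that $A$ is the origin, uses Bezout to produce a matrix $M\in SL_2(\mathbb Z)$ sending the (primitive direction of the) base to the horizontal axis, and then applies a piecewise integral translation turning the slanted parallelogram $M(R)$ into an axis-parallel semi-open integral rectangle, for which the identity $\sharp\underline{R}=V(R)$ is immediate; all the lattice-point bookkeeping is thus pushed into that trivial final case. You instead fiber $R$ directly along the base direction: the projection of $\mathbb Z^2$ along $\mathbb R v$ onto $\mathbb R v^{\perp}$ is a rank-one lattice of covolume $1/|v|$ (primitive $v$ extends to a basis — the same fact the paper extracts from Bezout), the integrality of the line $(CD)$ puts both endpoints of the projected half-open segment in that lattice, giving exactly $|h|\,|v|$ fibers, and each half-open fiber of length $|v|$ carries exactly one lattice point; the non-primitive case is handled by stacking $k$ integer translates. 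Your version has the advantage of making the half-open conventions do visible work (one point per column, no boundary ambiguity), whereas the paper's unimodular reduction keeps the computation coordinate-free of covolumes. Two cosmetic slips worth fixing: with $v^{\perp}$ a \emph{unit} normal the side vector is $w=h\,v^{\perp}$ (not $\frac{h}{|v|}v^{\perp}$), and the integrality of $(CD)$ says $w$ differs from an integer vector by a \emph{real} multiple of $v$ (as your subsequent ``$\bmod\ \mathbb R v$'' phrasing correctly states), not an integer multiple; neither affects the projection argument, which only uses that the projection of $D$ lies in the projected lattice.
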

\begin{proof}
After a translation by an integer we may assume  that the origin is the  vertex $A$  of  the integral  rectangle $R=R_{AB}(h)$. Let $(p',q')$ be an integer on the line segment $[A,B]$ with $p',q'$  relatively prime. By Bezout theorem there is $(u,v)\in \mathbb Z^2$
with $up+vq=1$. Therefore there is a matrix $M\in SL2(\mathbb Z)$ with $M(p,q)=(k,0)$. As the transformation $M$ preserve both the volume and the integer points it is 
enough to consider the semi-open  parallelogram $M(R)$. But there is a piecewise integral translation, which maps $M(R)$ to a semi-open integral rectangle with basis  $M([A,B[)\subset \mathbb R\times \{0\}$. For such a rectangle the area is obviously equal to the cardinality of its integer points.
\end{proof}

For  $A,B\in \mathbb R^2$ and $\epsilon<\frac{|AB|}{2} $ we let $A^\epsilon $ and  $B^\epsilon$ be the points in  the line $(AB)$ with Euclidean distance $|\epsilon|$ to $A$ and $B$ respectively, which lie inside $[A,B]$ if  $\epsilon>0$  and outside ifnot. As the symmetric difference of $R_{A^\epsilon B^\epsilon}(h)$ and $R_{AB}(h)$ is given by the union of two rectangles with sides of length $|\epsilon|$ and $|h|$ we have for some constant $C=C(|\epsilon|,|h|)$ 
\begin{equation}\label{bb}\left|\sharp \underline{R_{A^\epsilon B^\epsilon}(h)}-\sharp \underline{R_{AB}(h)}\right|\leq C.
\end{equation}
This estimate still holds true for $\epsilon\geq|AB|/2 $ when choosing the convention $R_{A^\epsilon B^\epsilon}(h)=\emptyset$ for such $\epsilon$.

\begin{fact}\label{hg}
For any convex body $I$ and for any $a>0$, there exists  $\epsilon^+=\epsilon^+(I)>0$ and $\epsilon^-=\epsilon^-(I,a)>0$  such that any convex polytope $J=A_1\cdots A_n$ with $\angle J\geq a$ satisfies
$$\partial^+J\subset \bigcup_{l<n}R_{A_l^{\epsilon^+} A_{l+1}^{-\epsilon^+}}\left(-h_I(N^{A_lA_{l+1}})\right)$$ 
and $$\partial^-J\supset \bigcup_{l<n }R_{A_l^{\epsilon^-} A_{l+1}^{-\epsilon^-}}\left(h_I(N^{A_lA_{l+1}})\right).$$
\end{fact}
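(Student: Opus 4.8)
The plan is to prove the two inclusions separately, reducing each to an essentially one-dimensional computation near a single edge $A_lA_{l+1}$, and then using the angle lower bound $\angle J\ge a$ to control how far the ``influence'' of a neighbouring edge can extend along the current edge.

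First consider the external boundary. A point $u\in\partial^+J=(J\oplus I)\setminus J$ can be written $u=j+i$ with $j\in J$, $i\in I$, $u\notin J$. Since $u\notin J$ there is a supporting hyperplane (edge) $A_lA_{l+1}$ with $u\in T^-_{A_lA_{l+1}}$ violated, i.e. $u\in T^+_{A_lA_{l+1}}$; writing $N=N^{A_lA_{l+1}}$ we have $0\le (u-A_l)\cdot N\le i\cdot N\le h_I(N)$, since $j\in J$ gives $(j-A_l)\cdot N\le 0$. This already places $u$ in the slab of width $h_I(N)$ adjacent to the edge, on the outside. What must still be checked is that the \emph{tangential} coordinate of $u$ along the edge cannot shoot out past the endpoints by more than a fixed amount $\epsilon^+$: since $\mathrm{diam}(I')$ is the only relevant bound on $\|u-j\|$, the point $u$ lies within distance $\mathrm{diam}(I')$ of $J$, hence within distance $\mathrm{diam}(I')$ of the closed edge (here one uses that outside the slab over the edge, $J$ recedes, but near the vertex the geometry is controlled by $\angle J\ge a$: the two edges at $A_l$ make an interior angle $\ge a$, so a point at distance $\le\mathrm{diam}(I')$ from $J$ lying in the outer slab of edge $A_lA_{l+1}$ projects onto that edge at a point no further than $\mathrm{diam}(I')/\tan(a/2)$ — or a similar explicit quantity — beyond $A_l$). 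Taking $\epsilon^+:=\mathrm{diam}(I')$ (note this depends only on $I$, not on $a$, because the relevant estimate is the crude one $\|u-j\|\le\mathrm{diam}(I')$ combined with the rectangle being measured \emph{within} the strip) gives the claimed covering of $\partial^+J$ by the rectangles $R_{A_l^{\epsilon^+}A_{l+1}^{-\epsilon^+}}(-h_I(N^{A_lA_{l+1}}))$; one double-checks the sign/orientation convention so that the height $-h_I(N)$ indeed points outward.

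For the internal boundary I would argue in the reverse direction: I must exhibit a rectangle, sitting \emph{inside} the edge and of a height I can take as small as I need, all of whose points belong to $\partial^-J=J\setminus(J\ominus I)$. Fix an edge $A_lA_{l+1}$ with normal $N$, and let $w$ be a point of $J$ at signed distance $t\in(0,h_I(N))$ from $T_{A_lA_{l+1}}$, with tangential coordinate lying in the sub-segment $[A_l^{\epsilon^-},A_{l+1}^{-\epsilon^-}]$. To show $w\notin J\ominus I$ I need some $i\in I$ with $w+i\notin J$; choosing $i\in\mathrm{ex}(I)$ realizing $h_I(N)=i\cdot N$ gives $(w+i-A_l)\cdot N = t + h_I(N) - 0 >0$... wait, that's in the wrong slab, so instead use that $w+i$ has $N$-coordinate $h_I(N)-($something$)$; the point is that for $w$ close enough to the edge and far enough (distance $\ge\epsilon^-$) from the vertices, translating by the extremal $i$ pushes $w$ past the supporting line of the edge, hence out of $J$, because the only obstruction to leaving $J$ this way is meeting one of the two adjacent edges, and the angle bound $\angle J\ge a$ guarantees those adjacent edges stay clear of the translated sub-segment provided $\epsilon^-=\epsilon^-(I,a)$ is chosen comparable to $\mathrm{diam}(I')/\sin a$ or similar. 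This is why $\epsilon^-$, unlike $\epsilon^+$, must depend on $a$: a very sharp vertex forces us to stay further from it before the extremal translation is guaranteed to exit.

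The main obstacle is the bookkeeping at the vertices in the internal case: one must verify that the extremal translate $w+i$ (with $i$ chosen per the edge $A_lA_{l+1}$) genuinely falls outside $J$ and is not ``caught'' by the neighbouring faces $A_{l-1}A_l$ or $A_{l+1}A_{l+2}$, and quantifying exactly how large $\epsilon^-$ must be in terms of $a$ and $\mathrm{diam}(I')$ requires a careful planar trigonometry argument near each vertex. The external inclusion is comparatively routine once the slab estimate and the crude diameter bound are in place; there the only subtlety is confirming $\epsilon^+$ can be taken independent of $a$, which works because we are only claiming containment in a rectangle rather than exhausting the boundary.
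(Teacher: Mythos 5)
The paper gives no proof of this Fact (it is ``left to the reader''), so I assess your argument on its own terms; both halves have a genuine gap, and in each case the gap sits exactly where the quantifier structure of the statement ($\epsilon^+$ depending only on $I$, $\epsilon^-$ depending on $a$) gets decided. For $\partial^+J$ you pick an \emph{arbitrary} edge $F=A_lA_{l+1}$ whose half-plane constraint is violated by $u$ and then assert the tangential overshoot along that edge is at most $\diam(I')$. For the edge you chose this can fail: if the angles at the endpoints of $F$ are close to $\pi$ (which $\angle J\ge a$ does not exclude, being only a lower bound — think of a fine polygonal approximation of a huge circle), the boundary of $J$ stays close to the line $T_F$ far past the vertex, so there are points of $\partial^+J$ that violate $F$'s constraint, lie within the slab of depth $h_I(N^F)$, and sit tangentially arbitrarily far beyond $F$; they are caught by the rectangle of a different edge, not of $F$. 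Your own parenthetical bound $\diam(I')/\tan(a/2)$ is $a$-dependent, contradicting the independence you then claim. The correct mechanism is the metric projection: with $x$ the nearest point of $J$ to $u$ one has $\|u-x\|\le\max_{i\in I}\|i\|$; if $x$ is interior to an edge, $u$ lies over that edge in its slab (the height bound $(u-A_l)\cdot N^F\le h_I(N^F)$ from $u=j+i$ is the part you did correctly); if $x$ is a vertex $V$, then $u-V$ lies in the normal cone spanned by the two incident normals, so at least one incident edge $F$ has $0\le(u-V)\cdot N^F\le h_I(N^F)$, while the tangential overshoot past $V$ is at most $\|u-V\|$. This gives $\epsilon^+=\max_{i\in I}\|i\|$ with no reference to $a$.

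For $\partial^-J$ you labor over the wrong half. That a point $w\in J$ at depth $t<h_I(N^F)$ inside $T_F$ is not in $J\ominus I$ is immediate and needs neither the angle bound nor distance to the vertices: take $i\in I$ with $i\cdot N^F=h_I(N^F)$; then $(w+i-A_l)\cdot N^F=h_I(N^F)-t>0$, so $w+i$ lies strictly outside the supporting half-plane $T_F^-\supset J$, hence outside $J$ — adjacent edges cannot ``catch'' it, since $J$ lies entirely on one side of $T_F$. What genuinely requires $\epsilon^-=\epsilon^-(I,a)$, and what your proof never addresses, is that the (shrunken) rectangle is contained in $J$ at all: a base point at distance $\epsilon^-$ from a vertex with interior angle $\theta\ge a$ is at distance $\ge\epsilon^-\sin a$ from the adjacent edge's line, so choosing $\epsilon^-$ of order $\max_{i\in I}\|i\|/\sin a$ keeps the inward column of depth $h_I(N^F)$ on the correct side of the two adjacent supporting lines; this containment is the whole content of the internal inclusion once the trivial observation above is made. (Two further points you should make explicit: strictly speaking the containment also needs $J$ not to be too thin — a long thin rectangle with all angles $\pi/2$ defeats the inclusion for every $\epsilon^-$ — which is harmless for the exhaustions $\mathsf J_n$ where it is used; and you are in effect proving the symmetric-$\epsilon$ version of the displayed rectangles, the printed superscripts $A_l^{\epsilon}$, $A_{l+1}^{-\epsilon}$ shortening one end while extending the other, which is evidently a typo you should flag rather than silently repair.)
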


\begin{figure}[!ht]
\includegraphics[scale=0.4]{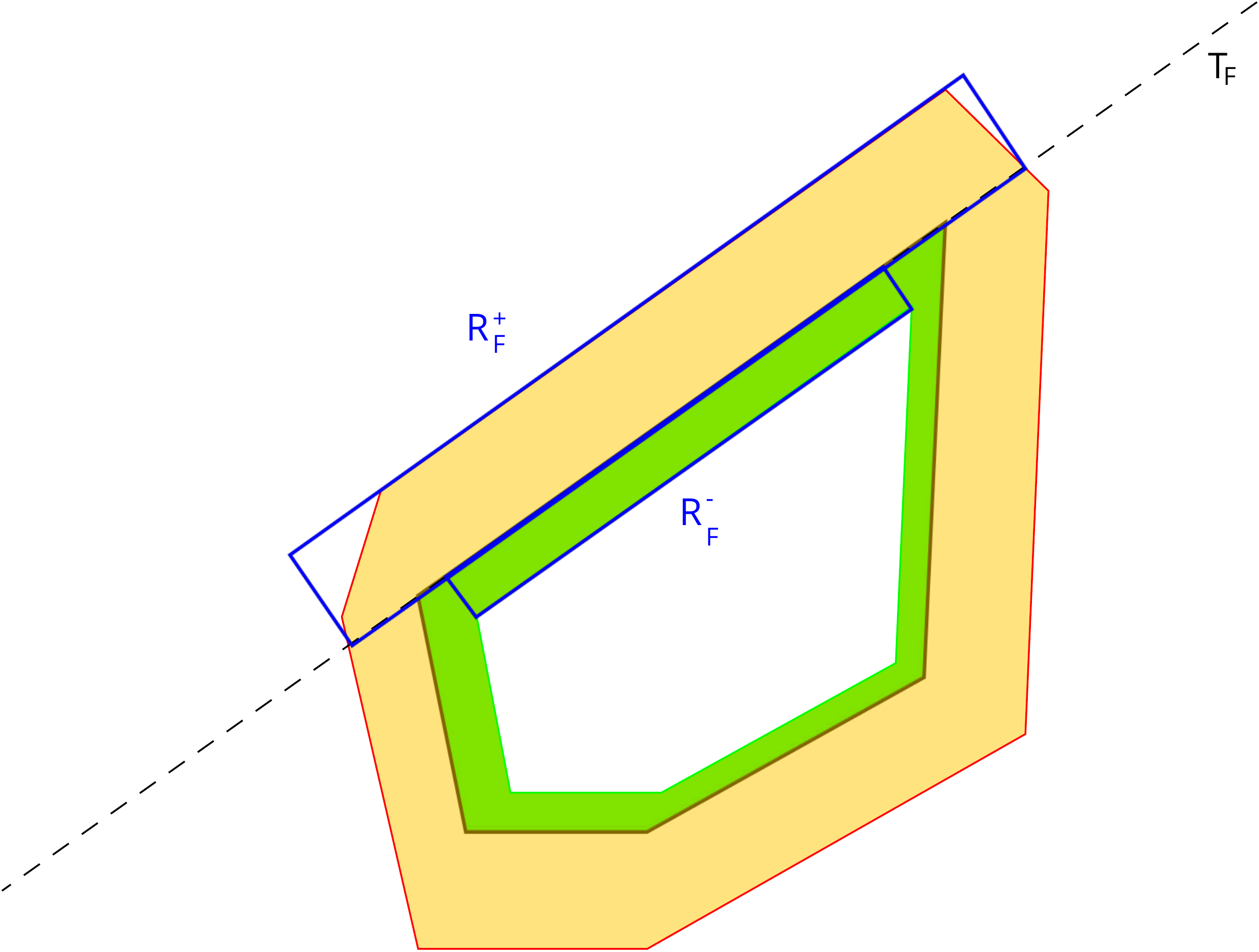}
\centering
\caption{\label{bl}\textbf{The external  and  internal   rectangles  associated to a face $F$ of a polygon.}  The external and internal  morphological boundaries are respectively represented  by the areas in yellow and green. The rectangles,  $R_F^+$ and  $R_F^-$, given by Fact \ref{hg}  are drawn in blue.}
\label{figg}
\end{figure}

This fact is illustrated on Figure \ref{figg} and its easy proof is left to the reader. We are now in a position to prove Proposition \ref{fdff}.

\begin{proof}[Proof of Proposition \ref{fdff}]
From the above fact and (\ref{bb})  there is $\epsilon=\epsilon^-(I,\angle J)>0$ and $C=C(I,\angle J)>0$ such that for any convex polytope $J=A_1\cdots A_n$
\begin{align*}\sharp \underline{\partial^-J}&\geq  \sum_{l<n} \sharp \underline{R_{A_l(\epsilon)A_{l+1}(\epsilon)}\left(-h_I(N^{A_lA_{l+1}})\right)},\\
& \geq  \sum_{F\in \mathcal F(J)} \left[\sharp \underline{R_{F} \left(-h_I(N^{F})\right)}-C\right].
\end{align*}
Then when $J$ is an integral convex polytope we get by Lemma \ref{ggo} :
 \begin{align*}\sharp \underline{\partial^-J}&\geq  -C\sharp \mathcal F(J)+ \sum_{F\in \mathcal F(J)} V\left(R_{F} \left(-h_I(N^{F})\right) \right),\\
&\geq -C\sharp \mathcal F(J)+\int_{\mathbb S^d}h_I \,d\sigma_J.
\end{align*}
For an integral convex exhaustion $(\mathsf J_n)_n$ we obtain finally for large $n$ by using Lemma \ref{pepe} and Lemma \ref{anglee}   
\begin{align*}\sharp \underline{\partial^-\mathsf J_n}&\geq  -C(I,\frac{\angle \mathsf J_\infty}{2}) \cdot \sharp \mathcal F(\mathsf J_n)+\int_{\mathbb S^d}h_I \,d\sigma_{\mathsf J_n},\\
\liminf_n\frac{\sharp \underline{\partial ^{-} \mathsf J_n} }{p(\mathsf J_n)}&\geq \lim_n\int_{\mathbb S^d}h_I \,d\sigma_{\tilde{\mathsf J_n}},\\
&\geq \int_{\mathbb S^d}h_I \,d\sigma_{\mathsf J_\infty}.
\end{align*}

One proves similarly that $\limsup_n \frac{\sharp \underline{\partial^{+} \mathsf J_n} }{p(\mathsf J_n)}\leq \int_{\mathbb S^d}h_I \,d\sigma_{\mathsf J_\infty}$ and this concludes the 
proof  of Proposition \ref{fdff} as we have  $\sharp \underline{\partial ^{+} \mathsf J_n}\geq \sharp \underline{\partial ^{-} \mathsf J_n}$ according to Lemma \ref{prof}.
\end{proof}

\subsection{Supremum of  $O\mapsto V_I(O)$}\label{gros}
 In this section we investigate the supremum of  $V_I$ on $\mathcal D^1$ for a given convex polytope $I$ of $\mathbb R^d$. We recall that there is a unique sphere $S_I$ containing $I$ with minimal radius, usually called the \textbf{smallest bounding sphere} of $I$.  
We let  $R_I$ and $x_I$ be respectively the radius  and the center of $S_I$. There are at least two distinct points in $S_I\cap I$, whenever $I$ is not reduced to a singleton, and $S_I\cap I\subset \ex(I)$. Moreover we have the following alternative : 
\begin{itemize}
\item either there is a finite subset of  $S_I\cap I$ generating an inscribable  polytope $T$  with $\Int(T)\ni x_I$   (in particular the interior set of $I$ is non empty), 
\item  or there is a hyperplane $H$ containing $x_I$ such that $I$ lies in  an associated  semispace and $S_I\cap H$ 
is the smallest bounding sphere of $I\cap H$.
\end{itemize}
The smallest bounding sphere $S_I$ (or $I$ itself) will be said \textbf{nondegenerated} (resp. \textbf{degenerated}) and  an associated polytope $T$ (resp. hyperplane $H$) is said  \textbf{generating}. 
For an inscribable polytope $T$ in $\mathbb R^d$ we may define its dual $T'$ as  the polytope given by the intersection of the inner semispaces tangent to the  circumsphere of $T$ at the vertices of $T$.  In the following $T'$ always denotes the dual polytope of a generating polytope $T$ with respect to $I$.

When $S_I$ is degenerated, there is a sequence of affine spaces $H=H_1\supset H_2\supset \cdots H_l\ni x_I$ such that $I\cap H_l$ is nondegenerated in $H_l$ and for all $1\leq i< l$ the convex polytope $I\cap H_i$ is degenerated  in $H_i$ with $H_{i+1}$ as an associated generating hyperplane ($H_i$ is a $d-i$ dimensional affine space).  We denote by  $L$ a generating polytope of $I\cap H_l$ in $H_l$ and by $L'$ its dual  polytope in $H_l$. Let $U$ be  an isometry  of $\mathbb R^d$  mapping $H_i$ for $i=1,\cdots, l$ to $\{0_i\}\times \mathbb{R}^{d-i}$ (where $0_i$ denotes the origin of $\mathbb R^i$) with $U(x_I)=0$. Then for $R>0$ we let 
$T'_R:=U^{-1}\left([-R, R]^l\times U(L')\right)$. The faces $F$ of $T'_R$ satisfy \begin{enumerate}
\item either $F=U^{-1}\left( [-R, R]^l\times U(\mathsf F)\right)$ for some face $\mathsf F$ of $L'$, 
\item  or $F=U^{-1}\left([-R, R]^{l-1}\times \{\pm R\}_i\times U(L')\right)$ for $i=1,\cdots, l$ (where $\{\pm R\}_i$ coresponds to the $i^{th}$ coordinate of the product).
\end{enumerate}
For $i=1,2$ we let   $\mathcal F_i (T'_R)$  be the subset of $\mathcal F(T'_R)$ given by the faces of the $i^{\text{th}}$ category. 

 Observe that when $x_I$ coincide with the origin then $T'$ or $T'_R$, $R>0$ are convex domains.

\begin{prop}\label{supe}
$$\sup_{O\in \mathcal D^1}V_I(O)=R_I.$$
The supremum of $V_I$  is achieved  if and only if  $S_I$ is nondegenerated. The supremum is then achieved for $O\in \mathcal D^1$  homothetic to the  dual  polytope $T'$ of a generating polytope $T$. 
\end{prop}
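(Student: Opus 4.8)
The plan is to prove the two inequalities $\sup_{O\in\mathcal D^1}V_I(O)\le R_I$ and $\sup_{O\in\mathcal D^1}V_I(O)\ge R_I$ separately, and then to analyze the equality case via the dual polytope construction.

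\textbf{Upper bound.} First I would show $V_I(O)\le R_I$ for every $O\in\mathcal D^1$. Since $V_I(O)=\int_{\mathbb S^d}h_I\,d\sigma_O$ and translating $I$ by $-x_I$ changes $h_I$ by the linear functional $x\mapsto -x_I\cdot x$, whose integral against $\sigma_O$ vanishes (because the centroid-type identity $\int_{\mathbb S^d}x\,d\sigma_O(x)=0$ holds for any convex domain — the surface area vector of a closed convex body is zero), we may assume $x_I=0$, so that $I\subset \overline{B(0,R_I)}$ and hence $h_I\le R_I$ pointwise on $\mathbb S^d$. Then $V_I(O)\le R_I\,\sigma_O(\mathbb S^d)=R_I\,p(O)=R_I$ since $O$ has unit perimeter. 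This gives the upper bound immediately; the only point requiring care is the vanishing of $\int x\,d\sigma_O$, which I would justify by the divergence theorem (or cite it as a standard fact about area measures from \cite{Gru}).

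\textbf{Lower bound and the nondegenerate equality case.} When $S_I$ is nondegenerate, let $T$ be a generating polytope with dual $T'$ (an inscribable polytope with circumsphere $S_I$, whose faces are tangent to $S_I$ at the vertices of $T$; after translating, $x_I=0$, so $T'$ is a convex domain). For a face $F$ of $T'$, the outer normal $N^F$ points toward the tangency point $v_F\in S_I\cap I\subset\ex(I)$, so $h_I(N^F)\ge v_F\cdot N^F=R_I$; combined with the upper bound $h_I\le R_I$ this forces $h_I(N^F)=R_I$ for every face $F$. Since $\sigma_{T'}$ is supported on the finite set $\{N^F: F\in\mathcal F(T')\}$, we get $V_I(T')=R_I\,\sigma_{T'}(\mathbb S^d)=R_I\,p(T')$, and after rescaling $T'$ to unit perimeter the value $V_I$ is unchanged (it is $1$-homogeneous in $O$, being $\int h_I\,d\sigma_O$ with $\sigma_{\lambda O}=\lambda^{d-1}\sigma_O$ and $p(\lambda O)=\lambda^{d-1}p(O)$), so the scaled copy $O\in\mathcal D^1$ realizes $V_I(O)=R_I$. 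For the converse — that equality forces $O$ homothetic to such a $T'$ — I would argue that $V_I(O)=R_I=R_I p(O)$ together with $h_I\le R_I$ forces $h_I=R_I$ $\sigma_O$-almost everywhere on $\mathbb S^d$, i.e. the normals of $O$ concentrate on $\{x: h_I(x)=R_I\}=\{x: x\cdot v=R_I \text{ for some } v\in S_I\cap I\}$; since $O\in\mathcal D$ is a genuine convex domain this set of normals must be a finite set spanning $\mathbb S^d$ positively, which by uniqueness in Minkowski's theorem (the support function, or equivalently the area measure on $\mathbb S^d$, determines the polytope up to translation) pins $O$ down to a homothet of the polytope whose facet normals are exactly these directions — that polytope is $T'$.

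\textbf{Degenerate case: the supremum is not attained but still equals $R_I$.} Here I would use the tower $H=H_1\supset\cdots\supset H_l\ni x_I$ and the polytopes $T'_R=U^{-1}([-R,R]^l\times U(L'))$ introduced just before the statement. The point is that $L'$ is a dual generating polytope for the nondegenerate section $I\cap H_l$ inside $H_l$, so by the nondegenerate analysis applied in $H_l$ we have $h_I(N^F)=R_I$ on the category-1 faces $F\in\mathcal F_1(T'_R)$ (whose normals lie in the direction of $H_l$), while the category-2 faces contribute $h_I(N^F)\le R_I$ but their total $\mathcal H_{d-1}$-measure is $O(R^{d-2})$ as $R\to\infty$, whereas the category-1 faces have total measure of order $R^{d-1}$. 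Computing $V_I(T'_R)=\sum_{F}h_I(N^F)\mathcal H_{d-1}(F)$ and $p(T'_R)=\sum_F\mathcal H_{d-1}(F)$, the category-2 terms are lower order in both, so $V_I(T'_R)/p(T'_R)\to R_I$ as $R\to\infty$; rescaling each $T'_R$ to unit perimeter gives a sequence in $\mathcal D^1$ with $V_I\to R_I$, proving $\sup V_I\ge R_I$ in the degenerate case too. Non-attainment follows because attainment would, by the equality analysis above, force $O$ to be a polytope all of whose normals $x$ satisfy $h_I(x)=R_I$ and positively span $\mathbb S^d$; but in the degenerate case $\{x:h_I(x)=R_I\}$ lies in a closed half-space through $0$ (indeed $I$ lies in a half-space bounded by a hyperplane through $x_I$, so the tangency directions cannot positively span), contradicting that $O$ is a convex domain.

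\textbf{Main obstacle.} The routine parts (the $h_I\le R_I$ bound, homogeneity, the $O(R^{d-2})$ estimate on category-2 faces) are straightforward; the delicate point is the equality-case rigidity — turning "$h_I=R_I$ holds $\sigma_O$-a.e." into "$O$ is homothetic to $T'$." This requires invoking the uniqueness part of Minkowski's existence theorem and checking that the support of $\sigma_O$ is exactly the vertex set of $T'$ (not a proper subset, which could not positively span, nor a strictly larger set, which would violate $h_I\le R_I$ somewhere of positive measure); I would treat this as the technical heart of the argument. One must also handle carefully the possibility, in the nondegenerate alternative, of several generating polytopes $T$ giving a priori different duals $T'$ — but each such $T'$ is an extremizer, consistent with the stated "homothetic to $T'$ for \emph{a} generating polytope $T$."
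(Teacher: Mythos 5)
Your proof of the statement itself is correct and follows essentially the same route as the paper: reduction to $x_I=0$ via the vanishing surface-area vector $\int_{\mathbb S^d}x\,d\sigma_O(x)=0$, the pointwise bound $h_I\le R_I$ giving $V_I(O)\le R_I\,p(O)$, the identity $V_I(T')=R_I\,p(T')$ for the dual polytope in the nondegenerate case, the polytopes $T'_R$ with the category-1 versus category-2 face comparison in the degenerate case, and non-attainment there because the normals of a maximizer would have to lie in $\{h_I=R_I\}$, which is contained in a closed half-space. The one point to correct is the ``rigidity'' you single out as the technical heart: the proposition only asserts that the supremum \emph{is} attained at some $O$ homothetic to a dual polytope $T'$, not that every maximizer has this form, and the stronger converse you sketch is actually false. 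Knowing that $\sigma_O$ is supported in the finite set $R_I^{-1}(I\cap S_I)$ fixes only the admissible facet normals, not the facet areas, so Minkowski uniqueness (which requires the full area measure, not merely its support) does not pin $O$ down to a homothet of $T'$: for instance, if $I$ is the square with vertices $(\pm1,0),(0,\pm1)$, then every axis-parallel rectangle containing $0$ in its interior becomes a maximizer after rescaling, while the only dual polytope $T'$ is the square $[-1,1]^2$. Dropping that claim costs you nothing: your half-space argument for non-attainment in the degenerate case uses only that the normals lie $\sigma_O$-a.e.\ in $\{h_I=R_I\}$ and that the area measure of a convex domain cannot be carried by a closed hemisphere, which is exactly the content of the paper's argument. (A minor slip: in the degenerate case the two categories of faces of $T'_R$ have total $\mathcal H_{d-1}$-measure of order $R^{l}$ and $R^{l-1}$ respectively, not $R^{d-1}$ and $R^{d-2}$; only the comparison between them is needed, so the conclusion stands.)
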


\begin{proof}
For any $v\in \mathbb R^d$ we have
\begin{align*}
V_{I+v}(O)&=\int h_{I+v} \,d\sigma_O,\\
&=\int h_{I} \,d\sigma_O+\int_{\mathbb S^d}v\cdot u\, d\sigma_O(u),\\
& =\int h_{I} \,d\sigma_O+ \int_{\partial O}v\cdot N^O\, d\mathcal H_{d-1}.
\end{align*}

By the divergence formula we have $\int_{\partial O}v\cdot N^O\, d\mathcal H_{d-1}=0$ for any $v\in \mathbb R^d$ and $O\in \mathcal D^1$.  Therefore we may assume $x_I=0$.  With the above notations we have $\max_{i\in I}i\cdot v\leq R_I$ for all
$ v\in \mathbb R^d$  with $\|v\|=1$ 
with equality iff $v$ belongs to $R_I^{-1}I$. Therefore $V_I(O)\leq R_I$ for any $O\in \mathcal D^1$. Moreover if the equality occurs then for $x$ in a subset $E$ of $\partial O$ with full $\mathcal H_{d-1}$-measure,  $h_I\left(N^O(x)\right)=\max_{i\in I}i\cdot x=R_I$ and therefore  the normal unit vector $N^O(x)$ belongs to $R_I^{-1}I$. But as $O$ is a convex domain, we may find $d+1$ points $x_1,\cdots, x_{d+1}$  in $E$ in such a way the origin belongs to the interior of the simplex $T=R_I\cv\left(N^O(x_1),\cdots , N^O(x_{d+1})\right)$. Thus $S_I$ is nondegenerated and the polytope  $T$ is a generating polytope with respect to $I$.  Moreover we have   with the above notations 
 $$\int h_{I} \,d\sigma_{T'}=R_I p(T').$$
Therefore the homothetic polytope $O'$ of $T'$ with unit perimeter achieves the supremum of $V_I$.
We  consider  now the degenerated case. With the above notations, we have   $h_I(N^F)=R_I$ for any $F\in \mathcal F_1(T'_R)$ (recall we assume $x_I=0$ without loss of generality). Moreover 
$\mathcal H_{d-1}\left(\bigcup_{F\in \mathcal F _2 (T'_R)}F \right)=o(p(T'_R))$ when $R$ goes to infinity. Therefore  the renormalization $O_R\in \mathcal D^1$ of $T'_R$ satisfies $$V_I(O_R)\xrightarrow{R\rightarrow +\infty} R_I.$$



\end{proof}

\section{Cellular automata}\label{deff}

\subsection{Definitions}
 We consider a finite set $\mathcal A$.  We endow the set $\mathcal A$ with the discrete topology and  $X_d=\mathcal A^{\mathbb Z^d}$  with the product topology.   We consider the $\mathbb Z^d$-shift $\sigma$ on $\mathcal A^{\mathbb Z^d}$ defined for $l\in \mathbb Z^d$ and $u=(u_k)_k\in X_d$ by $\sigma^l(u)=(u_{k+l})_k$. Any closed subset $X$  of  $X_d$ invariant under the action of $\sigma$ is called a $\mathbb Z^d$-\textbf{subshift}. We fix such a subshift  $X$ in  the  remaining of the paper.

For a bounded subset $J$ of $\mathbb{R}^d$ we consider the   partition $\mathsf P_J$ into \textbf{$\underline{J}$-cylinders}, i.e.  the element $\mathsf P_J^x$ of $\mathsf P_J$ containing $x=(x_i)_{i\in \mathbb{Z}^d}\in X$ is given by $\mathsf P_J^x:=\{y=(y_i)_{i\in \mathbb Z^d}\in X, \ \forall i\in \underline{J} \ y_i=x_i \}$. In other terms we may define $\mathsf P_J$ as the joined partition $\bigvee_{j\in \underline{J}}\sigma^{-j}\mathsf P_0$ with $\mathsf P_0$ being the zero-coordinate partition. 

A \textbf{cellular automaton} (CA for short) defined on a $\mathbb Z^d$-subshift $X$ is a continuous map $f:X\rightarrow X$ which commutes with the shift action $\sigma$. By a famous theorem of Hedlund \cite{He} the cellular automaton $f$ is given by a local rule, i.e. there exists a finite subset $I$ of $\mathbb{Z}^d$ and a map $F:\mathcal  A^I\rightarrow \mathcal A$ such that 
$$\forall j\in \mathbb{Z}^d \ \left( fx \right)_j=F\left( \left(x_{j+i}\right)_{i\in I}\right).$$ The (smallest) subset $I$ is called the \textbf{domain} of the CA.  Recall $I'=I\cup \{0\}$ and let $\mathbb I$ be the convex hull of $I'$.


\subsection{Lyapunov exponents for higher dimensional cellular automata}\label{Lypa}

Lyapunov exponent of  one-dimensional cellular automata have been defined in \cite{Sh,Ti}. We develop a similar theory in higher dimensions. Let $f$ be a CA on a $\mathbb Z^d$-subshift $X$ with domain  $I$.  

Given a convex  body $J$ of $\mathbb R^d$  and  $x\in X$, we let  
$$\mathcal E_f(x,J):= \{K \text{ convex body}, \  f \mathsf P_J^x \subset \mathsf P_{K}^{fx} \}$$

A priori the family $\mathcal E_f(x,J)$ does not admit   a greatest  element for the inclusion. Observe also that  the convex body $J\ominus I$ belongs to  $\mathcal E_f(x,J)$, in particular this family is not empty. Then we let for all $x$ :
$${\gr}_Jf(x):=\min\{\sharp \underline{ J\setminus K}, \ K\in \mathcal E_f(x,J)\} .$$

The family $\mathcal E_f(x,J)$ and the function ${\gr}_Jf(x)$ are constant on each atom $A$ of $\mathsf P_J$, thus we let $\mathcal E_f(A,J)$ and  ${\gr}_Jf(A)$ be these quantities. We denote by   $\mathcal D_f(x,J)$ the subfamily of   $\mathcal E_f(x,J)$ consisting in $K$ with $\sharp  \underline{J\setminus K}={\gr}_Jf(x)$. For $K$ in $\mathcal D_f(x,J)$ the intersection $K\cap J$ defines a convex body, which belongs also to $\mathcal D_f(x,J)$.

 For a convex exhaustion $\mathcal J=(J_n)_n$, we define \textbf{the growth  ${\gr}_{\mathcal J}f$} with respect to $\mathcal J$  as the  following real functions on $X$ :
$${\gr}_{\mathcal J}f:=\limsup_n\frac{ {\gr}_{J_n}f }{p(J_n)}.$$
Finally we let for  a convex domain $O\in \mathcal D^1$ : $${\gr}_{O}f=\sup_{\mathcal J\in \mathcal{E}(O)}{\gr}_{\mathcal J}f.$$

\begin{lemma}
The sequence of functions  $\left({\gr}_{O} f^k\right)_k$ is subadditive, i.e. 
$$\forall k,l\in \mathbb N \ \forall x\in X, \ {\gr}_{O}f^{k+l}(x)\leq {\gr}_{O}f^l(f^kx)+{\gr}_{O}f^k(x).$$
 \end{lemma}

\begin{proof}

Fix $x\in X$ and $k,l\in \mathbb N$.  Let $\mathcal J=(J_n)_n \in\mathcal E(O)$.  We consider a sequence $\mathcal K:=(K_n)_n$ of convex bodies in $\prod_n \mathcal D_{f^k}(x,J_n)$ with $K_n\subset J_n$ for all $n$.  Let $I_k$ be the domain of $f^k$. The convex body $J_n\ominus I_k$ belongs to $\mathcal E_{f^k}(x,J_n)$ for all $n$. By Proposition \ref{zut}, we have $\sharp \underline{ J_n\setminus K_n} \leq \sharp \underline{\partial_{I_k}^-J_n} =O\left(p(J_n)\right)$.
It follows from Lemma \ref{nul} and Remark \ref{integr} that  $\mathcal K$ is a convex exhaustion in $\mathcal E(O)$  with $p(K_n)\sim^n p(J_n)$. We also let $\mathcal L=(L_n)_n\in  \prod_n \mathcal D_{f^l}(f^kx,K_n)$ with $L_n\subset K_n$ for all $n$. Similarly the sequence $\mathcal L$ belongs to $\mathcal E(O)$  with $p(L_n)\sim^n p(J_n)$. Then we have for all positive integers $n$ : 
 \begin{align*}
 f^{k+l} \mathsf P_{J_n}^x &=f^l(f^k \mathsf P_{J_n}^x),\\
 & \subset f^l\left(\mathsf P_{K_n}^{f^kx}\right),\\
 &\subset \mathsf P_{L_n}^{f^{k+l}x}.
 \end{align*} 
Therefore we have 
\begin{align*}
{\gr}_{J_n}f^{k+l}(x)&\leq \sharp \underline{J_n\setminus L_n},\\
&\leq  \sharp \underline{J_n\setminus K_n}+ \sharp \underline{K_n\setminus L_n},
\end{align*}
then 
\begin{align*}
{\gr}_{\mathcal J}f^{k+l}(x)&=\limsup_n\frac{ {\gr}_{J_n}f }{p(J_n)},\\
&\leq \limsup_n\frac{ {\gr}_{K_n}f }{p(J_n)}+\limsup_n\frac{ {\gr}_{L_n}f }{p(J_n)},\\
&\leq \limsup_n\frac{ {\gr}_{K_n}f }{p(K_n)}+\limsup_n\frac{ {\gr}_{L_n}f }{p(L_n)}, \\
& \leq {\gr}^+_{\mathcal K}f^k(x)+{\gr}^+_{\mathcal L}f^l(f^kx),\\
\end{align*}

As the sequence $\mathcal K$ and $\mathcal L$ lie in $\mathcal E(O)$ we conclude that

\begin{align*}
{\gr}_{O}f^{k+l}(x) & \leq {\gr}_{O}f^k(x)+{\gr}_{O}f^l(f^kx).
\end{align*}

\end{proof}

The nonnegative function ${\gr}_{O}f$ satisfies ${\gr}_{O}f\leq \sup_{\mathcal J\in \mathcal E(O)}\limsup_n\frac{\sharp \underline{\partial_I ^- J_n}}{p\left(J_n\right)} $ and this last term is finite according to Proposition \ref{fin}. Therefore the subadditive ergodic theorem applies : for any $\mu\in \mathcal M(X,f)$
the sequence $\left(\frac{1}{n} {\gr}_{O}f^{n}(x)\right)_k$ converge almost everywhere to a $f$-invariant function $\chi_O$ with $\int \chi_O\, d\mu=\lim/\inf_n\frac{1}{n}\int {\gr}_{O}f^{n}\, d\mu$. We call the function $\chi_O$ the Lyapunov exponent of $f$ with respect to $O$.

\begin{rem} The exponent  $\chi_O$ for $O\in \mathcal D$ plays some how the role of the sum of the positive Lyapunov exponents in smooth dynamical systems. 
\end{rem}

\section{Rescaled entropy of cellular automata}\label{entr}

\subsection{Definition}
We let $\mathcal M(f)$ (resp. $\mathcal M(f,\sigma)$) be the set of invariant Borel probability measures on $X$ which are $f$-invariant (resp. $f$- and $\sigma$-invariant). 
For a finite clopen partition $\mathsf P$ of $X$ we let $H_{top}(\mathsf P)=\log \sharp \mathsf P$ and $H_{\mu}(\mathsf P)=-\sum_{A\in \mathsf P}\mu(A)\log\mu(A)$ with $\mu\in \mathcal M(f)$. In the following the symbol $*$ denotes either $*=top$ or $*=\mu\in \mathcal M(f)$. We let $h_{*}(f,\mathsf P)$ be the  entropy with respect to the clopen partition $\mathsf P$ :
$$h_{*}(f,\mathsf P):=\lim_n\frac{1}{n}H_*\left(\bigvee_{k=0}^{n-1}f^{-k}\mathsf P\right).$$
For two partitions $\mathsf P$, $\mathsf Q$  of $X$,    we say $\mathsf P$ is finer than $\mathsf Q$ and we write $\mathsf P >\mathsf Q$, when any atom of $\mathsf P$ is contained in an atom of $\mathsf Q$. The functions $H_*(\cdot)$ and $h_{*}(f,\cdot )$ are nondecreasing with respect to this order.

The rescaled  entropy with respect to a convex  exhaustion $\mathcal J=(J_n)_n$ is defined as follows 
$$h^d_{*}(f,\mathcal J)=\limsup_{n}\frac{h_{*}(f,\mathsf P_{J_n})}{p(J_n)}.$$
   In \cite{Lak} the authors defines a similar notion for the rescaled topological entropy with the renormalization factor  $\sharp \partial^-_{I}\underline{J_n}$ (which depends on the domain $I$ of  $f$) rather than $p(J_n)$. 

\begin{rem}
For $d=2$, when $J=\bigcup_{i\in I}J_i$ is a finite disjoint union of  Jordan domains $J_i$ with Lipshitz boundary, we have 
\begin{align*}\frac{h_{top}(f, \mathsf P_J)}{p(J)}&\leq \frac{\sum_{i\in I}h_{top}(f,\mathsf P_{J_i})}{\sum_{i\in I}p(J_i)},\\
&\leq \sup_{i\in I}\frac{h_{top}(f,\mathsf P_{J_i})}{p(J_i)}.
\end{align*} 
Moreover for each $i$, we have $p(J_i)\geq p\left(\cv(J_i)\right)$ and $\mathsf P_{\cv(J_i)}$ is finer than $\mathsf P_{J_i}$. Therefore 
\begin{align*}\frac{h_{top}(f, \mathsf P_J)}{p(J)}&\leq \frac{\sum_{i\in I}h_{top}(f,\mathsf P_{J_i})}{\sum_{i\in I}p(J_i)},\\
&\leq \sup_{i\in I}\frac{h_{top}(f,\mathsf P_{\cv(J_i)})}{p\left(\cv(J_i)\right) }.
\end{align*}
This inequality justifies somehow that we focus on convex bodies $J$ of $\mathbb R^d$. 
\end{rem}

We let  also for any $O\in \mathcal D^1$ $$h^d_{*}(f,O)=\sup_{\mathcal J \in \mathcal E(O)}h^d_{*}(f, \mathcal J)$$ and 
$$h^d_{*}(f)=\sup_{\mathcal J}h^d_{*}(f, \mathcal J),$$
where the last supremum holds over all convex exhaustions $\mathcal J$.
  For $d=1$ we have $p(J)=2$ for any convex subset $J$. Therefore up to a factor $2$ we recover the usual definition of entropy, $2h^1_{*}(f)=h_{*}(f)$. 

\begin{rem}\label{encore}
As the CA  $f$ commutes with the shift action $\sigma$ we have for all $k\in \mathbb Z^d$ and any subset $J$ of $\mathbb Z^d$
$h_{top}(f, \mathsf P_{J+k})= h_{top}(f, \sigma^{-k}\mathsf P_{J})=h_{top}(f,\mathsf  P_J)$ and the same holds for the measure theoretical entropy with respect to measures in $\mathcal M(f,\sigma)$. Let us call generalized convex domain any convex body with a non empty interior set.  Replacing convex domains by generalized convex domains, we may define generalized convex exhaustions  $\mathcal J$ and the associated  rescaled entropies. Then it follows from the aforementioned invariance by translation of the entropy, that  $h^d_{top}(O)=h^d_{top}(O+\alpha)$ for all $\alpha\in \mathbb{R}^d$ and all generalized convex domain $O$ with unit perimeter. Indeed for any $(J_n)_n\in \mathcal E(O)$ (resp. $\mathcal E  \left(\mathcal{O+\alpha}\right)$) there is a sequence of integers $(k_n)_n$ with $(J_n+k_n)_n\in \mathcal E  \left(\mathcal{O+\alpha}\right)$ (resp. $(J_n)_n\in \mathcal E(O)$). 
\end{rem}

\begin{rem}
\begin{enumerate}
\item The partition $\mathsf P_{J_n}$ may be written as $\bigvee_{k\in J_n}\sigma^{-k}\mathsf P_{0}$ with $\mathsf P_0$ being the zero-coordinate partition. Instead of $\mathsf P_0$  we could choose another clopen generating partition $\mathsf P$, i.e. a partition of $X$ into clopen sets with $\bigvee_{k\in \mathbb Z^d}\sigma^{-k}\mathsf P$ equal to the partition of $X$ into points. But for a finite subset $J$ of $\mathbb{Z}^d$ we have $\bigvee_{k\in J}\sigma^{-k}\mathsf P>\mathsf P_0$ and $\bigvee_{k\in J}\sigma^{-k}\mathsf P_0>\mathsf P$ so that in the definition of the rescaled entropy we may replace $\mathsf P_0$ by any other generator $\mathsf P$ of $X$, i.e. $\mathsf P_{J_n}$ by $\bigvee_{k\in \underline{J_n}} \sigma^{-k}\mathsf P$.    
\item Let $X$ be a zerodimensional compact metrizable space endowed with a  expansive $\mathbb Z^d$-action $\tau$.  We consider a map $f$ preserving $(X,\tau)$ i.e. $f$ is an homeomorphism of $X$ commuting with $\tau$. The triple $(X,\tau, f)$ is called a topological $\mathbb Z^d$-expansive preserving system (t.e.p.s. for short). Two 
t.e.p.s. $(Y,\phi, g)$ are conjugated when there is a homeomorphism $h:X\rightarrow Y$ such  that $h\circ f\circ h^{-1}=g$  and $h\circ \tau \circ h^{-1}=\phi$. 
 We may define the rescaled entropy as we did for a CA and all the previous results hold in this more general setting.  Moreover two conjugated t.e.p.s. have the same rescaled entropy. Any t.e.p.s. is conjugated to a CA. 
 \end{enumerate}
 \end{rem}

 \subsection{Link with the metric mean dimension}
 In a compact metric space $(X,\mathsf d)$, the ball of radius $\epsilon\geq 0$ centered at $x\in X$ will be denoted by $B_{\mathsf d}(x,\epsilon)$. 
For a continuous map $f:X\rightarrow X$  we denote by $\mathsf d_n$ the dynamical distance  defined for all $n\in \mathbb N$ by 
$$\forall x,y\in X,\ \mathsf d_n(x, y)=\max \{\mathsf d(f^kx,f^ky), \ 0\leq k<n\}.$$ 
The metric mean dimension of $f$ is defined 
as $\mdim(f,\mathsf d)=\limsup_{\epsilon\rightarrow 0}\frac{h_{top}(f,\epsilon)}{|\log \epsilon|}$ where $h_{top}(f,\epsilon)$ denotes the topological entropy at the scale $\epsilon>0$ : 
$$h_{top}(f,\epsilon):=\limsup_n\frac{1}{n}\log \min \{\sharp C, \ \bigcup_{x\in C}B_{\mathsf d_n}(x,\epsilon)=X\}.$$

The topologial mean dimension is the infimum of $\mdim(f,\mathsf d)$ over all distances on $X$. We refer to \cite{Lin} for alternative definitions and furter properties of mean dimension.  The topological mean dimension of a finite dimensional topological system is null.

 Here $f$ is a CA on a subshift of $\mathbb Z^d$. In particular it has zero topological mean dimension.  For a norm $\|\cdot\|$ of $\mathbb R^d$ we may associate a metric $\mathsf d_{\|\|}$ on $X_d$ by letting  $\mathsf d_{\|\|}(u,v)=\alpha^{-\min\{\|k\|, \ k\in \mathbb{Z}^d, \ u_k\neq v_k\}}$ for all $u=(u_k)_k, v=(v_k)_k\in X_d$.  Then for $l\in \mathbb N$ the (open) ball $B_{\mathsf d_{\|\|}}(x,2^{-l})$ with respect to $\mathsf d_{\|\|}$ coincides with the cylinder  $\mathsf P_{J_l}^x$ with  $J_l=B_{\|\|}(0,l)$. 

As there is a correspondence between convex symmetric domains  and unit balls of norms  on $\mathbb R^d$, the mean dimension with respect to such distances $\mathsf d_{\|\|}$   are given by $h_{top}^d(f,\mathcal J_O)$ for convex symmetric domains $O$.

 \begin{rem}
 In \cite{Tsu} the authors work with a measure theoretical quantity, called the measure distorsion rate dimension and show a variational principle with the metric mean dimension of $d_{\|\|}$. Does this quantity coincides with $\mu\mapsto h^d_\mu (f, O)$ with $O$ being the symmetric convex  domain associated to the norm $\|\|$? 
 \end{rem}

\subsection{Monotonicity and Power} 
We investigate now basic properties of the rescaled entropy.

\begin{lemma}\label{nwe}
For any $O\in \mathcal D$ and any $\alpha>0$, we have 
$$h^d_*(f,\mathcal J_O)=h^d_*(f,\mathcal J_{\alpha O}).$$ 

\end{lemma}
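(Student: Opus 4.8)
The plan is to exploit the fact that $\mathcal{J}_O = (nO)_n$ and $\mathcal{J}_{\alpha O} = (n\alpha O)_n$, and that both sequences are convex exhaustions associated to the (rescaled) limit $p(O)^{-1/(d-1)}O$. The key point is that scaling $O$ by $\alpha$ merely reindexes the dilation sequence up to a bounded error, and that the perimeter scales homogeneously, $p(n\alpha O) = \alpha^{d-1} n^{d-1} p(O)$, so the normalizing factors match asymptotically after accounting for the reindexing.

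First I would recall that for a fixed integer $k$ and the convex body $kO$, the partition $\mathsf{P}_{n(kO)}$ equals $\mathsf{P}_{(nk)O}$, so $h_*(f, \mathsf{P}_{n(kO)}) = h_*(f, \mathsf{P}_{(nk)O})$; hence for rational $\alpha = k/m$ one compares $\mathsf{P}_{n k O}$ with $\mathsf{P}_{n m O}$ along common refinements of the two exhaustions. More robustly, for general $\alpha > 0$ I would argue by monotonicity of $h_*(f, \cdot)$ in the partition (stated just before the lemma): given $\epsilon > 0$, choose integers so that, for $n$ large, $\lfloor \alpha n \rfloor O \subset \lceil \alpha n \rceil O$ and these sandwich a dilate $mO$ appearing in $\mathcal{J}_O$; since $\underline{\lfloor\alpha n\rfloor O} \subset \underline{n'O}$ for a suitable $n'$ comparable to $\alpha n$, one gets $h_*(f, \mathsf{P}_{\lfloor \alpha n\rfloor O}) \le h_*(f, \mathsf{P}_{n' O})$ and a matching lower bound. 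Dividing by $p(\lfloor\alpha n\rfloor O) \sim \alpha^{d-1} n^{d-1} p(O)$ and by $p(n'O) \sim (n')^{d-1} p(O)$, and using $n'/(\alpha n) \to 1$, the ratios of perimeters converge to $1$, so the $\limsup$ defining $h^d_*(f, \mathcal{J}_{\alpha O})$ equals that defining $h^d_*(f, \mathcal{J}_O)$.

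The cleanest way to organize this is: (i) observe $\mathsf{P}_J > \mathsf{P}_K$ whenever $\underline{K} \subset \underline{J}$, hence $h_*(f,\mathsf{P}_J) \ge h_*(f, \mathsf{P}_K)$; (ii) for any $n$ pick integers $a_n \le \alpha n \le b_n$ with $b_n - a_n$ bounded, so $a_n O \subset \alpha n O \subset b_n O$ and therefore $h_*(f, \mathsf{P}_{a_n O}) \le h_*(f, \mathsf{P}_{n(\alpha O)}) \le h_*(f, \mathsf{P}_{b_n O})$; (iii) divide through by $p(n \alpha O) = \alpha^{d-1} n^{d-1} p(O)$, note $p(a_n O)/p(n\alpha O) \to 1$ and $p(b_n O)/p(n\alpha O)\to 1$, and take $\limsup$ to squeeze $h^d_*(f, \mathcal{J}_{\alpha O})$ between $h^d_*(f, \mathcal{J}_O)$ and itself. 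The only mild subtlety — the step I expect to need the most care — is the measure-theoretic case: the intermediate dilates $a_n O$, $b_n O$ are not of the form $nO$ for a single $n$, so one must pass through a common refinement or use that along the sparse subsequence $\{ n O : n \in \mathbb{N} \}$ the $\limsup$ of $h_*(f, \mathsf{P}_{nO})/p(nO)$ is unchanged by the bounded shifts $n \mapsto n \pm O(1)$; this is exactly the translation/comparison argument already used implicitly via Lemma \ref{nul} and the monotonicity of $h_*(f,\cdot)$, so no new machinery is required.
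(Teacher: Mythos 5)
Your overall strategy is the same as the paper's: compare $\mathsf P_{n\alpha O}$ with $\mathsf P_{mO}$ for nearby integer dilates using monotonicity of $h_*(f,\cdot)$ under refinement of cylinder partitions, and use the homogeneity of the perimeter so that the normalizations agree asymptotically. The paper does exactly this, but in one direction only: for each $n$ it sets $k_n=\lceil n/\alpha\rceil$, so $nO\subset k_n\alpha O$, gets $h_*(f,\mathsf P_{nO})\le h_*(f,\mathsf P_{k_n\alpha O})$, divides by the (equivalent) perimeters, and obtains $h^d_*(f,\mathcal J_O)\le h^d_*(f,\mathcal J_{\alpha O})$; the reverse inequality is then obtained by applying the same argument with $\alpha O$ and $\alpha^{-1}$ in place of $O$ and $\alpha$.

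There is one step in your write-up that does not work as stated: the final ``squeeze''. From $a_nO\subset n\alpha O\subset b_nO$ the upper estimate indeed gives $h^d_*(f,\mathcal J_{\alpha O})\le h^d_*(f,\mathcal J_O)$, because a $\limsup$ along the subsequence $\{b_n\}$ is at most the full $\limsup$. But the lower estimate only yields $\limsup_n h_*(f,\mathsf P_{a_nO})/p(a_nO)\le h^d_*(f,\mathcal J_{\alpha O})$, and when $\alpha>1$ the integers $a_n\approx\alpha n$ skip values, so this $\limsup$ along a proper subsequence can be strictly smaller than $h^d_*(f,\mathcal J_O)$; the sandwich therefore does not deliver $h^d_*(f,\mathcal J_O)\le h^d_*(f,\mathcal J_{\alpha O})$. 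The repair is immediate and is what the paper does: either index from the other side (for each $m$ choose $n_m=\lceil m/\alpha\rceil$ so that $mO\subset n_m\alpha O$, keeping the full sequence on the small side of the inequality), or invoke the symmetry $O\leftrightarrow\alpha O$, $\alpha\leftrightarrow\alpha^{-1}$. Also, your concern about the measure-theoretic case is unnecessary: no common refinement is needed, since monotonicity of $h_\mu(f,\cdot)$ under refinement applies verbatim, and the quantities $h_*(f,\mathsf P_{J})$ are finite, so the comparison of normalizing perimeters is harmless for $*=\mu$ exactly as for $*=top$.
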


\begin{proof}
For $n\in \mathbb N$, we let $k_n=\lceil \frac{n}{\alpha}\rceil$, thus $nO\subset k_n\alpha O$ and $p(nO)\sim^n p(k_n\alpha O)$. 
Therefore 
\begin{align*}
h^d_*(f,\mathcal J_O)&=\limsup_n \frac{h_*(f,\mathsf P_{nO} )}{ p(nO)},\\
&\leq \limsup_n \frac{h_*(f,\mathsf P_{k_n\alpha O} )}{ p(nO)},\\
&\leq \limsup_n \frac{h_*(f,\mathsf P_{k_n\alpha O} )}{ p(k_n\alpha O)},\\
&\leq h^d_*(f,\mathcal J_{\alpha O}).
\end{align*} 
The other inequality is obtained by considering $\alpha O$ and $\alpha^{-1}$ in place  of $O$ and $\alpha$.  
\end{proof}

\begin{lemma}\label{dfd}For any $O\in \mathcal D^1$ and  $O'\in \mathcal D$ with $O\subset \Int(O')$, we have 
$$h^d_*(f,\mathcal J_O)\leq h^d_*(f,O)\leq p(O') h^d_*(f,\mathcal J_{O'}).$$
\end{lemma}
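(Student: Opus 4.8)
The first inequality $h^d_*(f,\mathcal J_O)\le h^d_*(f,O)$ is immediate from the definition $h^d_*(f,O)=\sup_{\mathcal J\in\mathcal E(O)}h^d_*(f,\mathcal J)$, since $\mathcal J_O\in\mathcal E\left(p(O)^{-1/(d-1)}O\right)$; but here $O\in\mathcal D^1$ so $p(O)=1$ and $\mathcal J_O\in\mathcal E(O)$ directly, hence it is one of the exhaustions over which the supremum is taken. So the content is the second inequality.

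For the second inequality, fix an arbitrary convex exhaustion $\mathcal J=(J_n)_n\in\mathcal E(O)$; I must bound $h^d_*(f,\mathcal J)=\limsup_n \frac{h_*(f,\mathsf P_{J_n})}{p(J_n)}$ by $p(O')h^d_*(f,\mathcal J_{O'})$. The idea is to cover each $J_n$ by a bounded number of integer translates of a dilate $k_n O'$ of $O'$. Since $O\subset\Int(O')$, the rescaled sets $\tilde J_n=p(J_n)^{-1/(d-1)}J_n$ converge to $O$, so for $n$ large $\tilde J_n\subset\Int(O')$, i.e. $J_n\subset p(J_n)^{1/(d-1)}\Int(O')$. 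More precisely, one can arrange (using that $O$ is a convex domain, hence $0\in\Int O\subset\Int O'$, and a compactness/covering argument) that $J_n$ is contained in a union of $N_n$ translates $O'_{n,1},\dots,O'_{n,N_n}$ of the single dilated body $k_n O'$ where $k_n$ is chosen comparable to $p(J_n)^{1/(d-1)}$, with $N_n$ bounded as $n\to\infty$ — in fact a single translate suffices if one takes $k_n=\lceil c\, p(J_n)^{1/(d-1)}\rceil$ for a suitable constant $c$, using that $O\subset\Int O'$ gives $\lambda O\subset O'$ for some $\lambda>1$. Then $\mathsf P_{J_n}$ is coarser than $\bigvee_{j}\sigma^{-u_{n,j}}\mathsf P_{k_nO'}$ (an integer-translate refinement), whence by monotonicity and subadditivity of $h_*(f,\cdot)$ and shift-invariance (Remark~\ref{encore} in the topological case; for $*=\mu$ one restricts to $\mathcal M(f,\sigma)$ or notes $\mathsf P_{J+k}$ still has $h_*(f,\mathsf P_{J+k})=h_*(f,\mathsf P_J)$ via the conjugacy $\sigma^k$),
\[
h_*(f,\mathsf P_{J_n})\le \sum_{j=1}^{N_n} h_*(f,\sigma^{-u_{n,j}}\mathsf P_{k_nO'}) = N_n\, h_*(f,\mathsf P_{k_nO'}).
\]
Dividing by $p(J_n)=p(J_n)$ and using $p(k_nO')=k_n^{d-1}p(O')\sim (c\,p(J_n)^{1/(d-1)})^{d-1}p(O')=c^{d-1}p(O')\,p(J_n)$, together with the choice of $c$ making $c^{d-1}p(O')$ absorb the overhead $N_n$ (or: keeping $N_n$ and $c$ separate and then optimizing), gives
\[
\frac{h_*(f,\mathsf P_{J_n})}{p(J_n)}\le \frac{N_n\, h_*(f,\mathsf P_{k_nO'})}{p(J_n)}\sim p(O')\,\frac{h_*(f,\mathsf P_{k_nO'})}{p(k_nO')}\cdot\frac{N_n\,c^{d-1}p(O')\,p(J_n)/p(k_nO')}{p(O')}.
\]
Taking $\limsup_n$ and using $\limsup_n \frac{h_*(f,\mathsf P_{k_nO'})}{p(k_nO')}\le \limsup_m \frac{h_*(f,\mathsf P_{mO'})}{p(mO')}=h^d_*(f,\mathcal J_{O'})$ (the subsequence $(k_n)$ of $\mathbb N$), one gets $h^d_*(f,\mathcal J)\le p(O')h^d_*(f,\mathcal J_{O'})$ after the constants are reconciled; since $\mathcal J\in\mathcal E(O)$ was arbitrary, taking the supremum yields $h^d_*(f,O)\le p(O')h^d_*(f,\mathcal J_{O'})$.

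The main obstacle is the clean bookkeeping of constants: one wants the covering multiplicity $N_n$ times the perimeter ratio to converge exactly to $p(O')$, which forces a careful choice of the dilation factor $k_n$ relative to $p(J_n)^{1/(d-1)}$ and of how many translates of $k_nO'$ are needed to cover $J_n$. The cleanest route is probably to avoid multiple translates entirely: because $O\subset\Int(O')$ and both contain $0$ in their interiors, there is $\lambda>1$ with $\lambda O\subset O'$; set $k_n=\lceil \lambda^{-1}p(J_n)^{1/(d-1)}\rceil$ so that $J_n=p(J_n)^{1/(d-1)}\tilde J_n\subset p(J_n)^{1/(d-1)}O\subset \lambda k_n O\subset k_n O'$ for $n$ large (after a harmless translation to align with $\mathbb Z^d$, using Lemma~\ref{nwe} and Remark~\ref{encore}), giving $N_n=1$; then $p(k_nO')=k_n^{d-1}p(O')\sim \lambda^{-(d-1)}p(J_n)\,p(O')\le p(J_n)\,p(O')$ (since $\lambda>1$), so $\frac{h_*(f,\mathsf P_{J_n})}{p(J_n)}\le \frac{h_*(f,\mathsf P_{k_nO'})}{p(J_n)}\le p(O')\frac{h_*(f,\mathsf P_{k_nO'})}{p(k_nO')}\cdot(1+o(1))$, and passing to the $\limsup$ finishes the proof. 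One must also double-check that integer translation to make $k_nO'$ align with the lattice and absorb the discrepancy between $\mathsf P_{J_n}$ and $\mathsf P_{k_nO'}$ costs nothing — this is exactly the shift-invariance noted in Remark~\ref{encore}, valid for $*=top$ and for $*=\mu\in\mathcal M(f,\sigma)$; for general $\mu\in\mathcal M(f)$ one uses instead that $h_\mu(f,\mathsf P_{J+k})=h_{\sigma^k\mu}(f,\mathsf P_J)$ and works with the family $\{h_\mu\}$ uniformly, or simply restricts attention to translates $J_n$ already containing a lattice point near the scaled copy so that $\mathsf P_{k_nO'}$ refines $\mathsf P_{J_n}$ on the nose.
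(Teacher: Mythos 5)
Your argument is essentially the paper's proof: for large $n$ the rescaled bodies $\tilde J_n$ lie in $\Int(O')$, so $J_n$ is contained in a (rounded) dilate $k_nO'$, partition monotonicity gives $h_*(f,\mathsf P_{J_n})\leq h_*(f,\mathsf P_{k_nO'})$, and the perimeter ratio $p(k_nO')/p(J_n)\to$ (at most) $p(O')$ yields the bound; the opening discussion of coverings by several translates is an unnecessary detour that you rightly abandon. Two small clean-ups: Hausdorff convergence does not give $\tilde J_n\subset O$ (only $\tilde J_n\subset \mu O$ eventually for any fixed $\mu>1$, or directly $\tilde J_n\subset\Int(O')$ as in the paper), so the chain $J_n\subset p(J_n)^{\frac{1}{d-1}}O\subset\lambda k_nO$ needs that extra factor of slack, which your choice of $\lambda>1$ easily absorbs; and no lattice translation or shift-invariance is needed at all, since $0\in\Int(O')$ makes $J_n\subset k_nO'$ hold as is and hence $\mathsf P_{k_nO'}>\mathsf P_{J_n}$ directly --- which matters because the lemma is stated for every $\mu\in\mathcal M(f)$, not only for $\sigma$-invariant measures.
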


\begin{proof}
As $\mathcal J_O\in \mathcal E(O)$  the inequality $h^d_*(f,\mathcal J_O)\leq h^d_*(f,O)$  follows from the definitions. 
Let now $\mathcal J\in \mathcal E(O)$. For $n$ large enough we have $\tilde J_n\subset \Int(O')$, therefore $J_n\subset p(J_n)^{\frac{1}{d-1}}O'$.
Therefore we conlude that  \begin{align*}
h^d_*(f,\mathcal J)&\leq \limsup_n \frac{p\left(p(J_n)^{\frac{1}{d-1}}O'\right)}{p(J_n)}h^d_*(f, \mathcal J_{O'}),\\
& \leq p(O')h^d_*(f, \mathcal J_{O'}).
\end{align*}
\end{proof}

For $O\in \mathcal D^1$ the origin belongs to $\Int(O)$ so that  $\alpha O\in \mathcal D$ and  $O\subset \Int(\alpha O)$ for any $\alpha>1$. Moreover we have $h^d_*(f,\mathcal J_{\alpha O})=h^d_*(f,\mathcal J_{O})$ by Lemma \ref{nwe}.  Together with Lemma \ref{dfd} we get immediately :
\begin{coro}
$$\forall O\in \mathcal D^1, \ h^d_*(f,O)=h^d_{*}(f, \mathcal J_O).$$
\end{coro}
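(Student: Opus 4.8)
The statement to prove is the Corollary: for all $O\in \mathcal D^1$, we have $h^d_*(f,O)=h^d_*(f,\mathcal J_O)$.

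The plan is to deduce this directly from Lemma \ref{dfd}, Lemma \ref{nwe}, and a limiting argument, exactly as the text preceding the corollary suggests. First I would recall that by definition $\mathcal J_O = (nO)_n \in \mathcal E(O)$ whenever $O\in\mathcal D^1$, so the inequality $h^d_*(f,\mathcal J_O)\le h^d_*(f,O)$ is immediate from the fact that $h^d_*(f,O)$ is a supremum over $\mathcal E(O)$. The real content is the reverse inequality.

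For the reverse inequality, fix $O\in\mathcal D^1$. Since the origin lies in the interior of $O$, for every $\alpha>1$ we have $\alpha O\in\mathcal D$ (it is a convex domain, not necessarily of unit perimeter) and $O\subset\Int(\alpha O)$. Then Lemma \ref{dfd}, applied with $O'=\alpha O$, gives $h^d_*(f,O)\le p(\alpha O)\,h^d_*(f,\mathcal J_{\alpha O})$. Now $p(\alpha O)=\alpha^{d-1}p(O)=\alpha^{d-1}$ since $O$ has unit perimeter, and by Lemma \ref{nwe} we have $h^d_*(f,\mathcal J_{\alpha O})=h^d_*(f,\mathcal J_O)$. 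Hence $h^d_*(f,O)\le \alpha^{d-1}\,h^d_*(f,\mathcal J_O)$ for every $\alpha>1$. Letting $\alpha\to 1^+$ yields $h^d_*(f,O)\le h^d_*(f,\mathcal J_O)$, which combined with the trivial inequality finishes the proof.

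This argument is essentially routine; there is no genuine obstacle, since all the work has been done in Lemmas \ref{nwe} and \ref{dfd}. The only minor point to be careful about is that Lemma \ref{dfd} requires $O\in\mathcal D^1$ (which holds) and $O'\in\mathcal D$ with $O\subset\Int(O')$ (which holds for $O'=\alpha O$, $\alpha>1$), and that we correctly use the scaling $p(\alpha O)=\alpha^{d-1}p(O)$ together with $p(O)=1$. One should also note in passing that Lemma \ref{nwe} is stated for $O\in\mathcal D$, which covers $\alpha O$ here. Thus:

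\begin{proof}
Since $\mathcal J_O=(nO)_n$ belongs to $\mathcal E(O)$, the inequality $h^d_*(f,\mathcal J_O)\le h^d_*(f,O)$ is immediate from the definition of $h^d_*(f,O)$ as a supremum over $\mathcal E(O)$.

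For the converse, fix $\alpha>1$. As $O\in\mathcal D^1$, the origin lies in $\Int(O)$, hence $\alpha O\in\mathcal D$ and $O\subset\Int(\alpha O)$. Applying Lemma \ref{dfd} with $O'=\alpha O$ gives
\[
h^d_*(f,O)\le p(\alpha O)\,h^d_*(f,\mathcal J_{\alpha O}).
\]
Now $p(\alpha O)=\alpha^{d-1}p(O)=\alpha^{d-1}$ because $O$ has unit perimeter, and $h^d_*(f,\mathcal J_{\alpha O})=h^d_*(f,\mathcal J_O)$ by Lemma \ref{nwe}. Therefore
\[
h^d_*(f,O)\le \alpha^{d-1}\,h^d_*(f,\mathcal J_O)
\]
for every $\alpha>1$. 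Letting $\alpha\to 1^+$ we obtain $h^d_*(f,O)\le h^d_*(f,\mathcal J_O)$, which together with the first inequality yields the claim.
\end{proof}
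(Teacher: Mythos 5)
Your proof is correct and follows essentially the same route as the paper: the paper likewise notes that for $\alpha>1$ one has $\alpha O\in\mathcal D$ and $O\subset\Int(\alpha O)$, applies Lemma \ref{dfd} together with Lemma \ref{nwe} (so that $h^d_*(f,\mathcal J_{\alpha O})=h^d_*(f,\mathcal J_O)$), and concludes; your only addition is to spell out the harmless limit $p(\alpha O)=\alpha^{d-1}\to 1$ as $\alpha\to 1^+$.
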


\begin{coro}

\[O\mapsto h_*^d(f,O)\text{ is continuous on }\mathcal D^1.\]

\end{coro}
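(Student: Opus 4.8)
The plan is to establish sequential continuity: given a sequence $(O_k)_k$ in $\mathcal D^1$ converging to $O\in\mathcal D^1$ for the Hausdorff topology, I would show $h^d_*(f,O_k)\to h^d_*(f,O)$. The device I would use is the auxiliary functional on $\mathcal D$
$$\widehat h(P):=p(P)\,h^d_*(f,\mathcal J_P)=\limsup_n\frac{h_*(f,\mathsf P_{nP})}{n^{d-1}},$$
where the two expressions coincide because $p(nP)=n^{d-1}p(P)$. Three facts about $\widehat h$ are needed. First, by the preceding corollary $\widehat h(P)=h^d_*(f,\mathcal J_P)=h^d_*(f,P)$ for every $P\in\mathcal D^1$. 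Second, $\widehat h$ is nondecreasing under inclusion: if $P\subseteq Q$ then $nP\subseteq nQ$, hence $\mathsf P_{nQ}>\mathsf P_{nP}$ and $h_*(f,\mathsf P_{nP})\le h_*(f,\mathsf P_{nQ})$ for every $n$ (the functions $h_*(f,\cdot)$ being nondecreasing in the refinement order), so $\widehat h(P)\le\widehat h(Q)$. Third, $\widehat h$ is homogeneous of degree $d-1$: Lemma \ref{nwe} gives $h^d_*(f,\mathcal J_{\alpha P})=h^d_*(f,\mathcal J_P)$, whence $\widehat h(\alpha P)=p(\alpha P)h^d_*(f,\mathcal J_P)=\alpha^{d-1}\widehat h(P)$ for all $\alpha>0$.

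Next I would squeeze each $O_k$ between two fixed dilates of $O$. Fix $\lambda>1$. Since $0\in\Int(O)$, the segment lemma for convex sets gives $(1-t)x\in\Int(O)$ for every $x\in O$ and $t\in(0,1]$; taking $t=1-\lambda^{-1}$ yields $\lambda^{-1}O\subseteq\Int(O)$, and the same argument applied to $\lambda O$ yields $O\subseteq\Int(\lambda O)$. As $\lambda^{-1}O$ and $O$ are compact there is $\delta>0$ with $\lambda^{-1}O+\overline{B(0,\delta)}\subseteq O$ and $O+\overline{B(0,\delta)}\subseteq\lambda O$. Because $O_k\to O$ for the Hausdorff topology, for all large $k$ one has $O\subseteq O_k+B(0,\delta)$ and $O_k\subseteq O+B(0,\delta)$. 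The latter immediately gives $O_k\subseteq\lambda O$. The former gives $\lambda^{-1}O\subseteq O_k$: if some $x\in\lambda^{-1}O$ failed to lie in the closed convex set $O_k$, a strictly separating unit vector $u$ would satisfy $\langle y,u\rangle<\langle x,u\rangle$ for all $y\in O_k$; but $x+\delta u\in\lambda^{-1}O+\overline{B(0,\delta)}\subseteq O\subseteq O_k+B(0,\delta)$, so $x+\delta u=z+w$ with $z\in O_k$, $|w|<\delta$, whence $\langle z,u\rangle=\langle x,u\rangle+\delta-\langle w,u\rangle>\langle x,u\rangle$, a contradiction. Thus $\lambda^{-1}O\subseteq O_k\subseteq\lambda O$ for all large $k$.

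Combining the two ingredients, the monotonicity and homogeneity of $\widehat h$ give, for all large $k$,
$$\lambda^{-(d-1)}h^d_*(f,O)=\widehat h(\lambda^{-1}O)\le\widehat h(O_k)=h^d_*(f,O_k)\le\widehat h(\lambda O)=\lambda^{d-1}h^d_*(f,O).$$
Letting $k\to\infty$ gives $\lambda^{-(d-1)}h^d_*(f,O)\le\liminf_k h^d_*(f,O_k)\le\limsup_k h^d_*(f,O_k)\le\lambda^{d-1}h^d_*(f,O)$, and letting $\lambda\to 1^+$ forces $h^d_*(f,O_k)\to h^d_*(f,O)$ (the case $h^d_*(f,O)=+\infty$ being covered by the same inequalities). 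I expect the only genuinely delicate point to be the ``non-collapsing'' inclusion $\lambda^{-1}O\subseteq O_k$ — this is exactly where convexity of the $O_k$ is essential, since a Hausdorff-close non-convex set could leave a hole inside $\lambda^{-1}O$, and the separating-hyperplane argument is what rules that out. Everything else, in particular keeping the degree-$(d-1)$ normalization straight (which is precisely why one works with $\widehat h$ rather than directly with $h^d_*(f,\mathcal J_{\cdot})$), is routine.
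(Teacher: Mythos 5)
Your proof is correct and follows essentially the route the paper intends: the corollary is left as an immediate consequence of the scale invariance of Lemma \ref{nwe}, the monotonicity of $h_*(f,\cdot)$ under refinement (the same fact packaged in Lemma \ref{dfd}), and the preceding identity $h^d_*(f,O)=h^d_*(f,\mathcal J_O)$. Your sandwich $\lambda^{-1}O\subseteq O_k\subseteq\lambda O$ together with the degree-$(d-1)$ homogeneous functional $\widehat h$ is exactly this argument made explicit, and the separating-hyperplane step justifying the inner inclusion is sound.
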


 Convex  polytopes are dense in $\mathcal D$. Therefore  we get  with $\mathcal{P}$ being the collections of convex $d$-polytopes with the origin in their interior set :
 
 \begin{coro}\label{poly}
 $$\sup_{O\in \mathcal D^1}h^d_*(f,O)= \sup_{P\in \mathcal{P}}h^d_*(f,\mathcal J_P) .$$
 \end{coro}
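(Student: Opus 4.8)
The plan is to deduce the equality from the preceding corollaries, namely that $h^d_*(f,O) = h^d_*(f,\mathcal J_O)$ for every $O \in \mathcal D^1$ and that $O \mapsto h^d_*(f,O)$ is continuous on $\mathcal D^1$, together with the density of convex $d$-polytopes in $\mathcal D$. The inequality $\sup_{P\in\mathcal P} h^d_*(f,\mathcal J_P) \le \sup_{O\in\mathcal D^1} h^d_*(f,O)$ is immediate once one checks that each $\mathcal J_P$, for $P \in \mathcal P$, is a convex exhaustion whose renormalized limit lies in $\mathcal D^1$: indeed $\mathcal J_P = (nP)_n \in \mathcal E\big(p(P)^{-1/(d-1)}P\big)$ by the very definition of $\mathcal J_O$ given in Section \ref{conv}, and $h^d_*(f,\mathcal J_P) = h^d_*(f, p(P)^{-1/(d-1)}P)$ since, by Lemma \ref{nwe}, rescaling the generating convex body does not change the rescaled entropy. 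Hence each term on the left appears (up to this harmless renormalization) among the terms on the right.

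For the reverse inequality, fix $O \in \mathcal D^1$ and $\delta > 0$. By Corollary following Lemma \ref{dfd}, $h^d_*(f,O) = h^d_*(f,\mathcal J_O)$. Since convex $d$-polytopes with the origin in their interior are dense in $\mathcal D^1$ for the Hausdorff topology (a convex domain can be approximated from inside and outside by such polytopes, and a suitable renormalization keeps the perimeter equal to $1$), and since $O \mapsto h^d_*(f,O)$ is continuous on $\mathcal D^1$, there is a convex polytope $P \in \mathcal P$ with $p(P)^{-1/(d-1)}P \in \mathcal D^1$ arbitrarily Hausdorff-close to $O$, so that $|h^d_*(f, p(P)^{-1/(d-1)}P) - h^d_*(f,O)| < \delta$. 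Applying again the corollary to Lemma \ref{dfd} and then Lemma \ref{nwe} gives $h^d_*(f, p(P)^{-1/(d-1)}P) = h^d_*(f,\mathcal J_{p(P)^{-1/(d-1)}P}) = h^d_*(f,\mathcal J_P)$. Therefore $h^d_*(f,O) < h^d_*(f,\mathcal J_P) + \delta \le \sup_{Q\in\mathcal P} h^d_*(f,\mathcal J_Q) + \delta$, and taking the supremum over $O$ and letting $\delta \to 0$ yields the claim.

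The only mild subtlety — and the step I expect to require the most care — is the density argument in $\mathcal D^1$ rather than in $\mathcal D$: one must ensure that the approximating polytopes can be chosen to still contain the origin in their interior and, after dividing by their perimeter, to still have unit perimeter, so that they genuinely lie in $\mathcal P$ modulo the rescaling used to land in $\mathcal D^1$. This is routine: start from polytopes $P_k \to O$ in $\mathcal D$ with $0 \in \Int(P_k)$, then set $O_k = p(P_k)^{-1/(d-1)} P_k$; since $p(P_k) \to p(O) = 1$ by the weak continuity of area measures (the convergence of perimeters recalled in Section \ref{conv}), we get $O_k \to O$ in $\mathcal D^1$, and $O_k$ is the renormalization of the polytope $P_k \in \mathcal P$. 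Everything else is a direct chaining of the already-established identities.
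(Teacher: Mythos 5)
Your proof is correct and follows essentially the same route as the paper, which deduces the corollary directly from the identity $h^d_*(f,O)=h^d_*(f,\mathcal J_O)$, the continuity of $O\mapsto h^d_*(f,O)$ on $\mathcal D^1$, and the density of convex polytopes; you have merely made explicit the renormalization $P\mapsto p(P)^{-1/(d-1)}P$ and the use of Lemma \ref{nwe}, which the paper leaves implicit.
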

However we will see that the supremum is not always achieved.  We prove now a formula for the rescaled entropy of a power. 
    
    \begin{lemma}\label{zdz}
    $$\forall O\in \mathcal D^1 \ \forall k\in \mathbb N,\  h^d_*(f^k,O)=kh^d_*(f,O).$$
    \end{lemma}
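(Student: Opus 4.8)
The plan is to reduce the entropy of $f^k$ with respect to the convex exhaustion $\mathcal J_O = (nO)_n$ to the entropy of $f$ with respect to a cofinal (in the sense of indices) subsequence of the same exhaustion, using the standard fact that $h_*(g^k,\mathsf Q) = k\, h_*(g,\mathsf Q)$ for any clopen partition $\mathsf Q$ and any self-map $g$, applied to $g=f$ and $\mathsf Q = \mathsf P_{nO}$. Concretely, $h_*(f^k, \mathsf P_{nO}) = k\, h_*(f,\mathsf P_{nO})$ because $\bigvee_{j=0}^{m-1} f^{-kj}\mathsf P_{nO}$ is a coarsening of $\bigvee_{j=0}^{km-1} f^{-j}\mathsf P_{nO}$, while conversely $\bigvee_{j=0}^{km-1} f^{-j}\mathsf P_{nO}$ refines $\bigvee_{j=0}^{m-1} f^{-kj}\mathsf P_{nO}$ up to bounded multiplicative loss — the usual argument giving $h_*(f^k,\mathsf Q)=k h_*(f,\mathsf Q)$.

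Granting that identity, the computation is immediate:
\begin{align*}
h^d_*(f^k,O) &= h^d_*(f^k,\mathcal J_O),\\
&= \limsup_n \frac{h_*(f^k,\mathsf P_{nO})}{p(nO)},\\
&= \limsup_n \frac{k\, h_*(f,\mathsf P_{nO})}{p(nO)},\\
&= k\, h^d_*(f,\mathcal J_O),\\
&= k\, h^d_*(f,O),
\end{align*}
where the first and last equalities use the Corollary stating $h^d_*(g,O) = h^d_*(g,\mathcal J_O)$ for all $O\in\mathcal D^1$, applied to $g = f^k$ and $g = f$ respectively (note $f^k$ is again a cellular automaton on $X$, so the corollary applies verbatim).

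The only genuine content is therefore the partition-level identity $h_*(f^k,\mathsf P) = k\, h_*(f,\mathsf P)$ for a clopen partition $\mathsf P$. For $* = top$ this is elementary counting: $\log \sharp \bigvee_{j=0}^{m-1} f^{-kj}\mathsf P \le \log \sharp \bigvee_{j=0}^{km-1} f^{-j}\mathsf P$, giving $k\,h_{top}(f,\mathsf P)\le \cdots$ is not quite the direction needed, so one runs it the standard way: dividing $\frac1m \log\sharp\bigvee_{j=0}^{km-1}f^{-j}\mathsf P$ by $k$ and letting $m\to\infty$ recovers $h_{top}(f,\mathsf P)$ on one side and $\frac1k h_{top}(f^k,\mathsf P)$ on the other after identifying $\bigvee_{j=0}^{m-1}f^{-kj}\bigl(\bigvee_{i=0}^{k-1}f^{-i}\mathsf P\bigr) = \bigvee_{j=0}^{km-1}f^{-j}\mathsf P$ and using that $\bigvee_{i=0}^{k-1}f^{-i}\mathsf P$ is still a finite clopen partition. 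For $* = \mu$ the same manipulation works with $H_\mu$ in place of $H_{top}$, using subadditivity and the fact that $\mu$ is $f$-invariant hence $f^k$-invariant. I expect this bookkeeping with the $k$-block partition $\bigvee_{i=0}^{k-1}f^{-i}\mathsf P$ to be the one point requiring care, but it is entirely routine; no geometric input from the convex-geometry sections is needed here beyond the already-proved identification $h^d_*(g,O)=h^d_*(g,\mathcal J_O)$.
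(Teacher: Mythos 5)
Your reduction to $\mathcal J_O$ via the corollary $h^d_*(g,O)=h^d_*(g,\mathcal J_O)$ (applied to $g=f^k$, which is indeed again a CA) is fine, but the partition-level identity you rest everything on is not a fact: for a \emph{fixed} partition $\mathsf Q$ one only has $h_*(f^k,\mathsf Q)\leq k\,h_*(f,\mathsf Q)$, and the equality can fail. The correct standard identity is $h_*(f^k,\bigvee_{l=0}^{k-1}f^{-l}\mathsf Q)=k\,h_*(f,\mathsf Q)$; the equality $h_*(f^k)=k\,h_*(f)$ holds only after taking the supremum over partitions, precisely because $\mathsf Q\mapsto\bigvee_{l=0}^{k-1}f^{-l}\mathsf Q$ then ranges over a cofinal family. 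Your "conversely" step is the tell: the observation that $\bigvee_{j=0}^{km-1}f^{-j}\mathsf Q$ refines $\bigvee_{j=0}^{m-1}f^{-kj}\mathsf Q$ gives the inequality $h_*(f^k,\mathsf Q)\leq k\,h_*(f,\mathsf Q)$ a second time, not the reverse one, and there is no "bounded multiplicative loss" comparison in the other direction. Concretely, for the shift $\sigma$ on $\{0,1\}^{\mathbb Z}$ and $\mathsf Q=\mathsf P_0$ one has $h_{top}(\sigma^k,\mathsf P_0)=\log 2<k\log 2=k\,h_{top}(\sigma,\mathsf P_0)$; similarly, for the horizontal shift on $\mathcal A^{\mathbb Z^2}$ and a window $nO$ small compared to $k$, $h_{top}(f^k,\mathsf P_{nO})$ is strictly smaller than $k\,h_{top}(f,\mathsf P_{nO})$. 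So the inequality $h^d_*(f^k,O)\leq k\,h^d_*(f,O)$ survives from your argument, but the inequality $k\,h^d_*(f,O)\leq h^d_*(f^k,O)$ is exactly the part left unproved.

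This missing direction is where the paper uses both the CA structure and the convex geometry you declared unnecessary. Given an exhaustion $\mathcal J=(J_n)_n\in\mathcal E(O)$, set $J_n^k=J_n\oplus\underbrace{I\oplus\cdots\oplus I}_{k}$. The local rule gives that $\mathsf P_{J_n^k}$ is finer than $\bigvee_{l=0}^{k-1}f^{-l}\mathsf P_{J_n}$, hence
\[
k\,h_*(f,\mathsf P_{J_n})=h_*\Bigl(f^k,\bigvee_{l=0}^{k-1}f^{-l}\mathsf P_{J_n}\Bigr)\leq h_*(f^k,\mathsf P_{J_n^k}),
\]
and then Lemma \ref{nul} (dilation by the fixed body $kI$ does not change the exhaustion class and gives $p(J_n^k)\sim p(J_n)$) lets one divide by $p(J_n)$, pass to the limsup, and conclude $k\,h^d_*(f,\mathcal J)\leq h^d_*(f^k,\mathcal J^k)\leq h^d_*(f^k,O)$ since $\mathcal J^k\in\mathcal E(O)$; taking the supremum over $\mathcal J$ finishes the proof. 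Note also that the asymptotic statement genuinely passes through the enlarged windows $J_n\oplus kI$ (or through the supremum over exhaustions), not through an exact identity at each fixed $n$, which is false in general. To repair your write-up you would need to insert exactly this window-enlargement step, i.e.\ the geometric input you claimed could be avoided.
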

    
\begin{proof}Let $O\in \mathcal D^1$ and  $\mathcal J=(J_n)_n\in \mathcal E(O)$. Let $J_n^k=J_n\oplus\underbrace{I\oplus\cdots\oplus I}_{k \text{ times}}$ for all $n$. The sequence $\mathcal J^k=(J_n^k)_n$  belongs also to $\mathcal E(O)$. Moreover the partition $\mathsf P _{J_n^k}$ is finer than $\bigvee_{l=0}^{k-1}f^{-l}\mathsf P_{J_n}$. Therefore
\[ h_*(f^k, \mathsf P_{J_n}) \leq k h_*(f,\mathsf P_{  J_n}) = h_*\left(f^k, \bigvee_{l=0}^{k-1}f^{-l}\mathsf P_{J_n}\right)\leq  h_*(f^k,\mathsf P _{J_n^k})\]
and we then obtain 
\[ h_*^d(f^k,\mathcal J) \leq  k h_*^d(f,\mathcal J)\leq h_*^d(f^k, \mathcal J^k).\] We conclude by taking the supremum in $\mathcal J\in \mathcal E (O)$. 
\end{proof}

  \begin{rem}
Clearly we have $h^d_\mu\leq h^d_{top}$ for any $\mu\in \mathcal M(f)$ but we ignore if a general variational principle holds true. 
\end{rem}

\subsection{A first upperbound for the rescaled entropy}  
  
Let $(X,f)$ be a cellular automaton with domain $I$. We relate the entropy of $\mathsf P_J$ with the entropy of $\mathsf P_{\partial^\pm J}$ and we prove an upperbound for  the rescaled entropy $h_{top}^d(f,O)$ in term of the first relative quermass integral $V_{\mathbb{I}}(O)$ with $\mathbb I$ being the convex hull of $I'$.
  
  \begin{lemma}\label{entropie} For any bounded subset $J$ of $\mathbb R^d$, we have 
$$ h_*(f,\mathsf P_J)=h_*(f, \mathsf P_{\partial_I^- J}) \text{ and }  h_*(f,\mathsf P_J)\leq h_*(f, \mathsf P_{\partial_I^+ J}). $$
  \end{lemma}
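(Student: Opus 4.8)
The plan is to prove the two statements by showing that the partition $\mathsf P_J$ and the partition $\mathsf P_{\partial^-_I J}$ (resp. $\mathsf P_{\partial^+_I J}$) generate the same, resp. finer, $\sigma$-algebra \emph{dynamically}, i.e. when one iterates $f$. The key geometric observation is the following: the local rule $F:\mathcal A^I\to\mathcal A$ for $f$ gives $(fx)_j = F((x_{j+i})_{i\in I})$, so the value of $fx$ on a set of coordinates $\underline K$ depends only on the values of $x$ on $\underline{K\oplus I}$; equivalently $f\mathsf P_{K\oplus I}^x\subset \mathsf P_K^{fx}$, and dually $f\mathsf P_J^x\subset\mathsf P_{J\ominus I}^{fx}$. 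I would first record this elementary fact, then iterate it to control $\bigvee_{k=0}^{n-1}f^{-k}\mathsf P_J$.

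\textbf{The internal-boundary equality.} For the equality $h_*(f,\mathsf P_J)=h_*(f,\mathsf P_{\partial^-_I J})$: since $\partial^-_I J\subset J$ we have $\mathsf P_J>\mathsf P_{\partial^-_I J}$, hence $h_*(f,\mathsf P_J)\geq h_*(f,\mathsf P_{\partial^-_I J})$. For the reverse inequality the idea is that the ``interior part'' $J\ominus I$ of $J$ is recoverable from $\partial^-_I J$ together with one application of $f$. Concretely, writing $\mathsf Q:=\mathsf P_{\partial^-_I J}$, I claim $\mathsf Q\vee f^{-1}\mathsf Q > \mathsf P_J$, or more robustly $\bigvee_{k=0}^{N}f^{-k}\mathsf Q$ refines $\mathsf P_J$ for suitable $N$: indeed knowing $x$ on $\underline{\partial^-_I J}$ and knowing $fx$ on $\underline{\partial^-_I J}$ pins down $x$ on $\partial^-_I J\cup\big((J\ominus I)\cap\text{(part determined by }fx)\big)$; iterating, and using that $J\ominus I$ shrinks further under erosion so that each $f$-step ``fills in'' an $I$-collar, one reaches all of $\underline J$ in finitely many steps (the number depending only on $J$ and $\diam(I')$). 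Then
\begin{align*}
h_*(f,\mathsf P_J)&\leq h_*\Big(f,\bigvee_{k=0}^{N}f^{-k}\mathsf Q\Big)\\
&= h_*(f,\mathsf Q)\\
&= h_*(f,\mathsf P_{\partial^-_I J}),
\end{align*}
using the standard identity $h_*(f,\bigvee_{k=0}^N f^{-k}\mathsf Q)=h_*(f,\mathsf Q)$ for the dynamical entropy of a refinement under the map itself.

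\textbf{The external-boundary inequality.} For $h_*(f,\mathsf P_J)\leq h_*(f,\mathsf P_{\partial^+_I J})$, set $\mathsf R:=\mathsf P_{\partial^+_I J}$. Here $J$ and $\partial^+_I J=(J\oplus I)\setminus J$ are disjoint, so $\mathsf P_J$ is not directly comparable to $\mathsf R$, but $\mathsf P_{J\oplus I}=\mathsf P_J\vee\mathsf R$. Using $f\mathsf P^x_{J\oplus I}\subset\mathsf P^{fx}_J$ one gets $f^{-1}\mathsf P_J>\mathsf P_{J\oplus I}=\mathsf P_J\vee\mathsf R$, so $\bigvee_{k=0}^{n-1}f^{-k}\mathsf P_J > \bigvee_{k=0}^{n-2}f^{-k}(\mathsf P_J\vee\mathsf R)\vee f^{-(n-1)}\mathsf P_J \supset \bigvee_{k=0}^{n-2}f^{-k}\mathsf R$ up to the single bounded-entropy term $f^{-(n-1)}\mathsf P_J$; dividing by $n$ and letting $n\to\infty$ kills that term and yields $h_*(f,\mathsf P_J)\leq h_*(f,\mathsf R)$.

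\textbf{Main obstacle.} The routine parts are the two monotonicity/telescoping entropy manipulations. The genuine content — and the step I expect to require the most care — is the combinatorial claim that finitely many $f$-iterates of $\mathsf P_{\partial^-_I J}$ recover $\mathsf P_J$: one must check that erosion by $I$ repeatedly, starting from the collar $\partial^-_I J$, exhausts $\underline J$ in a number of steps bounded in terms of $J$ (this uses boundedness of $J$ and $0\in\overline{I'}$, i.e. that $I\ni 0$ or one works with $I'$), and that at each step the already-known coordinates of $fx$ genuinely determine the next collar of $x$-coordinates via the local rule. I would isolate this as a short lemma on morphological erosion before running the entropy estimate.
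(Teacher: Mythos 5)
Your two entropy manipulations are not the issue; the issue is the combinatorial core in both halves. For the internal boundary, the lemma you propose to isolate is false: for a general (non-invertible) CA, knowing coordinates of $fx$ never pins down coordinates of $x$, so iterates of $\mathsf P_{\partial^-_I J}$ alone do not refine $\mathsf P_J$. Concretely, take $d=1$, $f=\sigma$ the shift with domain $I=\{1\}$ and $J=[0,n]$: then $\underline{\partial^-_I J}=\{n\}$ and $\bigvee_{k=0}^{N}f^{-k}\mathsf P_{\partial^-_I J}$ only depends on the coordinates $x_n,\dots,x_{n+N}$, so it refines neither $\mathsf P_J$ nor $\mathsf Q\vee f^{-1}\mathsf Q>\mathsf P_J$ for any $N$ (although the entropy equality of the lemma does hold in this example). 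The correct mechanism goes in the legitimate direction of determination, $x\mapsto fx$: since $\mathsf P_J>f^{-1}\mathsf P_{J\ominus I}$, one has $\mathsf P_J\vee f^{-1}\mathsf P_J=\mathsf P_J\vee f^{-1}\mathsf P_{\partial^-_I J}$, hence by induction $\bigvee_{l=0}^{k-1}f^{-l}\mathsf P_J=\mathsf P_J\vee\bigvee_{l=0}^{k-1}f^{-l}\mathsf P_{\partial^-_I J}$; then $H_*\bigl(\bigvee_{l=0}^{k-1}f^{-l}\mathsf P_J\bigr)\leq H_*(\mathsf P_J)+H_*\bigl(\bigvee_{l=0}^{k-1}f^{-l}\mathsf P_{\partial^-_I J}\bigr)$ and the single term $H_*(\mathsf P_J)$ dies after dividing by $k$. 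This is the paper's proof: one copy of $\mathsf P_J$ must be kept, and no claim that the boundary iterates generate $\mathsf P_J$ is needed (nor true).

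For the external boundary you do identify the right relation, namely that $\mathsf P_{J\oplus I}=\mathsf P_J\vee\mathsf R$ refines $f^{-1}\mathsf P_J$ (note that you wrote the refinement the other way round), but the telescoping as written fails. The partition $\bigvee_{k=0}^{n-2}f^{-k}(\mathsf P_J\vee\mathsf R)\vee f^{-(n-1)}\mathsf P_J$ still contains $n-1$ translated copies of $\mathsf P_J$, so it is not ``$\bigvee_{k=0}^{n-2}f^{-k}\mathsf R$ up to a single bounded-entropy term'', and the resulting bound only gives the vacuous $h_*(f,\mathsf P_J)\leq h_*(f,\mathsf P_J)+h_*(f,\mathsf R)$. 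What is needed — and what the paper proves by induction from $\mathsf P_J\vee\mathsf R>f^{-1}\mathsf P_J$ — is that $\mathsf P_J\vee\bigvee_{k=0}^{n-2}f^{-k}\mathsf R$ refines $\bigvee_{k=0}^{n-1}f^{-k}\mathsf P_J$: each $f^{-k}\mathsf P_J$ is recovered recursively from the already recovered $f^{-(k-1)}\mathsf P_J$ together with $f^{-(k-1)}\mathsf R$, so only the time-zero copy of $\mathsf P_J$ survives, and that single $H_*(\mathsf P_J)$ is the term which disappears upon dividing by $n$. With these two corrections your argument collapses onto the paper's proof; as it stands, both halves have a genuine gap.
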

  
\begin{proof}The inequality $ h_*(f,\mathsf P_J)\geq h_*(f, \mathsf P_{\partial_I^- J})$  follows directly from the inclusion $\partial^-J\subset J$. By definition of the domain $I$ and the erosion $J\ominus I$, we have $P_J>f^{-1}P_{J\ominus I}$. Therefore   we get $f^{-1}\mathsf P_J\vee \mathsf P_J=f^{-1}\mathsf P_{\partial^- J}\vee P_J$ and then by induction $\mathsf P_{J}\vee \bigvee_{l=0}^{k-1}f^{-l}\mathsf P_{\partial^- J}= \bigvee_{l=0}^{k-1}f^{-l}\mathsf P_{J}$ for all $k$. We conclude that : \begin{align*} h_*(f,\mathsf P_J)&=\lim_k\frac{1}{k}H_*(f,\bigvee_{l=0}^{k-1}f^{-l}\mathsf P_{J}), \\
&\leq \lim_k\frac{1}{k}\left(H_*\left(\mathsf P_{J}\right)+H_*\left(\bigvee_{l=0}^{k-1}f^{-l}\mathsf P_{\partial^- J} \right)\right),\\
&\leq h_*(f, \mathsf P_{\partial^- J}).
\end{align*}

We also have   \[\mathsf P_J\vee \mathsf P_{\partial^+J}>\mathsf P_{J\oplus I}>f^{-1}\mathsf P_J.\] Therefore we get now by induction on $k$
 \[\mathsf P_J\vee \bigvee_{l=0}^{k-2}f^{-l}\mathsf P_{\partial^+ J}>\bigvee_{l=0}^{k-1}f^{-l}\mathsf P_{J}.\]  
 This implies $ h_*(f, \mathsf P_{\partial_I^+ J})\leq h_*(f,\mathsf P_J)$.

\end{proof}

\begin{prop}\label{rutop}
For any $O\in \mathcal D^1$, 
$$h^d_{top}(f,O)\leq V_{\mathbb I}(O)\log \sharp \mathcal A.$$
\end{prop}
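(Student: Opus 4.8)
The plan is to bound $h_{top}(f,\mathsf P_{J_n})$ for a convex exhaustion $\mathcal J=(J_n)_n\in\mathcal E(O)$ and then divide by $p(J_n)$ and pass to the limit. By Lemma \ref{entropie} we have $h_{top}(f,\mathsf P_{J_n})\le h_{top}(f,\mathsf P_{\partial^+_I J_n})$, where the index $I$ may be replaced by $\mathbb I=\cv(I')$ since $\partial^+_I J_n=\partial^+_{I'}J_n\subset\partial^+_{\mathbb I}J_n$ for $J_n$ a convex domain (as recorded in Section \ref{conv}). So it suffices to control $h_{top}(f,\mathsf P_{\partial^+_{\mathbb I}J_n})$, and since topological entropy of a CA with respect to a clopen partition into $\underline K$-cylinders is at most $\log\sharp\mathcal A$ per coordinate, the crude bound $h_{top}(f,\mathsf P_K)\le (\sharp\underline K)\log\sharp\mathcal A$ gives
\[
h_{top}(f,\mathsf P_{J_n})\le \bigl(\sharp\underline{\partial^+_{\mathbb I}J_n}\bigr)\log\sharp\mathcal A.
\]

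Next I would invoke the lattice-point count for morphological boundaries. Actually one cannot simply apply Proposition \ref{fdff} (that is only $2$-dimensional) or Lemma \ref{grg} directly to a general exhaustion $J_n$, so instead I would first replace $J_n$ by the integral polytope $\mathsf J_n=\cv(\underline{J_n})$: by Remark \ref{integr}, $(\mathsf J_n)_n\in\mathcal E(O)$ with $p(\mathsf J_n)\sim^n p(J_n)$, and by Lemma \ref{jama}, $\underline{\partial^+\mathsf J_n}\subset\underline{\partial^+ J_n}$. Hmm — that goes the wrong way. The right route: use that $h_{top}(f,\mathsf P_{J_n})=h_{top}(f,\mathsf P_{\mathsf J_n})$ by Lemma \ref{entropie} applied to the internal boundary (which is unchanged, $\underline{\partial^-\mathsf J_n}=\underline{\partial^-J_n}$), so it is enough to bound $h_{top}(f,\mathsf P_{\mathsf J_n})\le(\sharp\underline{\partial^+_{\mathbb I}\mathsf J_n})\log\sharp\mathcal A$. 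Now Lemma \ref{grg} applies with the convex domain $\tilde{\mathsf J}_n=p(\mathsf J_n)^{-1/(d-1)}\mathsf J_n\to O$: writing $\mathsf J_n=p(\mathsf J_n)^{1/(d-1)}\tilde{\mathsf J}_n$ is not literally of the form $nO$, so I would instead apply Lemma \ref{grg} in the form stated, $\sharp\underline{\partial^\pm_{\mathbb I}nO'}/p(nO')\sim^C V_{\mathbb I}(O')/p(O')$, after reducing via Lemma \ref{dfd} and its corollary to the case $\mathcal J=\mathcal J_{O}=(nO)_n$ (recall $h^d_{top}(f,O)=h^d_{top}(f,\mathcal J_O)$). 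Then
\[
h^d_{top}(f,O)=\limsup_n\frac{h_{top}(f,\mathsf P_{nO})}{p(nO)}\le\log\sharp\mathcal A\cdot\limsup_n\frac{\sharp\underline{\partial^+_{\mathbb I}nO}}{p(nO)}.
\]

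Finally, since $O$ is a convex domain, the integral polytope step is not even needed here if I work directly with $nO$: combining Equations (\ref{fac}) and (\ref{bol}) as in the proof of Lemma \ref{grg}, or just quoting Lemma \ref{grg} itself, gives $\limsup_n\frac{\sharp\underline{\partial^+_{\mathbb I}nO}}{p(nO)}\le\frac{V_{\mathbb I}(O)}{p(O)}+\frac{C}{p(O)}$ — but the constant $C$ is a nuisance. To kill it, I would use Lemma \ref{zdz}: $h^d_{top}(f^k,O)=k\,h^d_{top}(f,O)$, while $f^k$ has domain contained in $\underbrace{I\oplus\cdots\oplus I}_{k}$, whose convex hull is $k\mathbb I$, so $h^d_{top}(f,O)=\frac1k h^d_{top}(f^k,O)\le\frac1k\bigl(V_{k\mathbb I}(O)+C'\bigr)\log\sharp\mathcal A=\bigl(V_{\mathbb I}(O)+C'/k\bigr)\log\sharp\mathcal A$ using $V_{k\mathbb I}(O)=kV_{\mathbb I}(O)$; letting $k\to\infty$ yields $h^d_{top}(f,O)\le V_{\mathbb I}(O)\log\sharp\mathcal A$. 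The main obstacle is precisely this bookkeeping: getting from the $\sim^C$-type estimate of Lemma \ref{grg} (with its dimensional constant) to a clean inequality without the constant, which the power trick of Lemma \ref{zdz} together with the homogeneity $V_{k\mathbb I}=kV_{\mathbb I}$ resolves; everything else is routine once one notes $p(nO)\sim^n n^{d-1}p(O)$ and that $\mathbb I$ is a convex body containing $0$.
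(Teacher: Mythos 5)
Your proof is correct and follows essentially the same route as the paper: reduce to the exhaustion $\mathcal J_O=(nO)_n$ via $h^d_{top}(f,O)=h^d_{top}(f,\mathcal J_O)$, bound $h_{top}(f,\mathsf P_{nO})$ through Lemma \ref{entropie} by the cardinality of the lattice points in the morphological boundary, estimate that count with Lemma \ref{grg}, and remove the dimensional constant by the power trick of Lemma \ref{zdz} combined with $\mathbb I_k\subset k\mathbb I$ and $V_{k\mathbb I}(O)=kV_{\mathbb I}(O)$. The detour through $\mathsf J_n$ that you start and then discard is indeed unnecessary, and the final argument you settle on is the paper's.
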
  
  
  \begin{proof}
Recall that   
\begin{align*}
h^d_{top}(f,O)&=h^d_{top}(f,\mathcal J_O),\\
&=\limsup_n\frac{h_{top}(f,\mathsf P_{nO})}{p(nO)}.
\end{align*}
Then by applying Lemma \ref{entropie} we obtain 
\begin{align*}
h^d_{top}(f,O)&\leq \limsup_n\frac{h_{top}(f,\mathsf P_{\partial^{\pm}nO})}{p(nO)}, \\
&\leq \limsup_n\frac{ \sharp \underline{\partial^{\pm}nO} \log \sharp \mathcal A}{p(nO)}. 
\end{align*}
For all $k\in \mathbb N\setminus\{0\}$ we let $I_k$ be the domain of $f^k$ and we denote by $\mathbb I_k$ the convex hull of $I'_k=I_k\cup\{0\}$.
Clearly we have $I_k \subset \underbrace{I\oplus \cdots \oplus I}_{k \text{ times }}$,  therefore $\mathbb I_k\subset k\mathbb I$.
By Lemma \ref{grg}, we get for some constant $C=C(d)$ : 
\begin{align*}
h^d_{top}(f^k,O)&\leq \left(V_{\mathbb I_k}(O)+C\right)\log \sharp \mathcal A,\\
&\leq \left(V_{k\mathbb{I}}(O)+C\right)\log \sharp \mathcal A,\\
&\leq \left(kV_{\mathbb{I}}(O)+C\right)\log \sharp \mathcal A.
\end{align*}

But by Lemma \ref{pwr} we have $h^d_{top}(f^k,O)=kh^d_{top}(f,O)$, so that we finally conclude  when $k$ goes to infinity 
$$h^d_{top}(f,O)\leq V_{\mathbb{I}}(O)\log \sharp \mathcal A.$$
  \end{proof}

\section{Ruelle inequality}\label{Rue}
Recall $(X,\sigma)$ denotes a $\mathbb Z^d$-subshift. The topological entropy of $\sigma$ is  defined for any F\"olner sequence $\mathcal L=(L_n)_n$  (see e.g. \cite{Wei})  as  \[h_{top}(\sigma)=\limsup_n\frac{H_{top}(\mathsf P_{L_n})}{\sharp L_n}.\]

\begin{lemma}\label{cov}
 For all $\epsilon>0$ there exists  $c>0$ such that we have for any $K\subset J$  convex bodies:
$$H_{top}(\mathsf P_{ J\setminus K})\leq \left(\sharp \underline{J\setminus K}+cp(J\oplus \mathsf C)\right)\cdot (h_{top}(\sigma)+\epsilon).$$
\end{lemma}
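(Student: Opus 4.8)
The plan is to cover the set $J\setminus K$ efficiently by a moderate number of translates of a fixed large box in $\mathbb{Z}^d$ and then use the definition of the topological entropy of the shift $\sigma$ on each such box. First I would fix $\epsilon>0$ and choose, using $h_{top}(\sigma)=\limsup_n \frac{H_{top}(\mathsf P_{L_n})}{\sharp L_n}$ along the F\"olner sequence of cubes $\mathcal L=(L_m)_m$ with $L_m=[0,m)^d\cap\mathbb Z^d$, an integer $m_0$ such that $H_{top}(\mathsf P_{L_{m_0}})\leq (h_{top}(\sigma)+\epsilon)\,\sharp L_{m_0}=(h_{top}(\sigma)+\epsilon)\,m_0^d$. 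Set $m=m_0$ once and for all; the constant $c$ will be allowed to depend on $m$ (hence on $\epsilon$ and $d$) but not on the convex bodies $J,K$.

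Next I would tile $\mathbb R^d$ by the grid of half-open cubes $Q_\alpha=m\alpha+[0,m)^d$ for $\alpha\in\mathbb Z^d$, and let $\mathcal Q$ be the (finite) collection of those $Q_\alpha$ meeting $\underline{J\setminus K}$. Since $J\setminus K$ is covered by $\bigcup_{Q\in\mathcal Q}Q$, the partition $\mathsf P_{J\setminus K}=\bigvee_{j\in\underline{J\setminus K}}\sigma^{-j}\mathsf P_0$ is coarser than $\bigvee_{Q\in\mathcal Q}\mathsf P_{\underline Q}$, and each $\mathsf P_{\underline Q}=\sigma^{-m\alpha}\mathsf P_{L_m}$ has $H_{top}(\mathsf P_{\underline Q})=H_{top}(\mathsf P_{L_m})$ by shift-invariance. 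Subadditivity of $H_{top}$ over joins then gives
\[
H_{top}(\mathsf P_{J\setminus K})\leq \sum_{Q\in\mathcal Q}H_{top}(\mathsf P_{\underline Q})\leq \sharp\mathcal Q\cdot (h_{top}(\sigma)+\epsilon)\,m^d.
\]
So the proof reduces to the counting estimate $\sharp\mathcal Q\cdot m^d\leq \sharp\underline{J\setminus K}+c\,p(J\oplus\mathsf C)$ for a suitable $c=c(d,m)$.

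For that counting step I would split $\mathcal Q$ into the cubes $Q$ entirely contained in $J\setminus K$ and the cubes $Q$ that meet $\underline{J\setminus K}$ but are not contained in it. Each $Q$ of the first type contributes exactly $m^d$ points of $\underline{J\setminus K}$, and these are disjoint, so the first type contributes at most $\sharp\underline{J\setminus K}$ to $\sharp\mathcal Q\cdot m^d$. A cube of the second type must contain a point of $\underline{J\setminus K}\subset \underline J$ and a point outside $J$ (if it also met the complement of $J\setminus K$ only through $\underline K$, one still argues it lies within bounded distance of $\partial J$ or $\partial K$; more simply, since $K\subset J$, a second-type cube either meets $\partial^-_{\mathsf C'} J$ or lies near $\partial K$, but in all cases it lies within distance $\sqrt d\,m$ of $\partial J$). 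Hence every second-type cube lies in $(J\oplus m\sqrt d\,\mathsf C)\setminus(J\ominus m\sqrt d\,\mathsf C)$, a set of volume $O_{d,m}(p(J))$ by Proposition \ref{voll} / Corollary \ref{reff} (or directly Lemma \ref{zut} applied to $J\oplus\mathsf C$ as in the proof of Proposition \ref{fin}); since the second-type cubes are pairwise disjoint and each has volume $m^d$, there are at most $O_{d,m}(p(J\oplus\mathsf C))$ of them, and they contribute at most $c\,p(J\oplus\mathsf C)$ to $\sharp\mathcal Q\cdot m^d$ for an appropriate $c=c(d,m)=c(d,\epsilon)$. Combining the two contributions yields the claim.

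The main obstacle is the bookkeeping in the counting step: one must be careful that a "boundary" cube is genuinely close to $\partial J$ — the subtlety is that $J\setminus K$ has two boundary pieces, $\partial J$ and (part of) $\partial K$, and a cube meeting $\underline{J\setminus K}$ might be near $\partial K$ rather than $\partial J$. The clean fix is to observe that such a cube still meets $\underline{J}$, hence after enlarging $J$ to $J\oplus\mathsf C$ it is captured by $J(c')$ for $c'=(\,\diam\mathsf C+m\sqrt d\,)$ relative to $J\oplus\mathsf C$, exactly as in the proof of Proposition \ref{fin}; then Lemma \ref{zut} bounds the number of such cubes by $c\,p(J\oplus\mathsf C)$. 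Everything else is shift-invariance of $H_{top}$ and subadditivity over joins, which are routine.
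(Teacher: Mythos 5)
Your overall strategy is the same as the paper's: tile $\mathbb{R}^d$ by translates of a fixed cube chosen from the F\"olner sequence, split the tiles meeting $\underline{J\setminus K}$ into those contained in $J\setminus K$ and the remaining ``boundary'' tiles, and combine shift-invariance of $H_{top}$ with subadditivity over joins; the entropy part and the count of the interior tiles are fine. The genuine gap is in the count of the boundary tiles. A tile that meets $\underline{J\setminus K}$ but is not contained in $J\setminus K$ is only guaranteed to be within distance $m\sqrt d$ of $\partial J\cup\partial K$, and the tiles straddling $\partial K$ can sit deep inside $J$ (take $J$ a huge ball and $K$ a small concentric ball: the tiles along $\partial K$ are far from $\partial J$). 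For these your assertion that ``in all cases it lies within distance $\sqrt d\,m$ of $\partial J$'' is false, and the proposed fix does not repair it: the fact that such a tile meets $\underline J$ only places it inside $J\oplus\mathsf C$, not inside the collar $\left(J\oplus\mathsf C\right)(c')=\{x\in J\oplus \mathsf C:\ d(x,\partial (J\oplus\mathsf C))\leq c'\}$, so Lemma \ref{zut} applied to $J\oplus\mathsf C$ simply does not see these tiles. They cannot be absorbed into the first term either: such a tile may contain only one point of $\underline{J\setminus K}$, so charging them to $\sharp\underline{J\setminus K}$ costs a factor $m^d$ in front of that term, which destroys the form of the stated inequality.

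The missing ingredient, which is exactly how the paper closes this point, is to count the tiles near $\partial K$ separately: they lie in a collar of width $O_{d,m}(1)$ around $\partial K$, so the volume bound of Lemma \ref{zut} (applied to $K$, after a harmless dilation by $\mathsf C$, as in the proof of Proposition \ref{fin}) bounds their number by $c(d,m)\,p(K\oplus\mathsf C)$; one then invokes the monotonicity of the perimeter of convex bodies under inclusion, $p(K\oplus\mathsf C)\leq p(J\oplus\mathsf C)$ since $K\subset J$, to convert this into the required bound in terms of $p(J\oplus\mathsf C)$ alone (the paper's line ``$p(J\oplus\mathsf C)\leq p(K\oplus\mathsf C)$'' is an evident typo for this inequality). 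Note that this is precisely where the convexity of $K$, and not only of $J$, is used --- both for the perimeter monotonicity and for the collar-volume estimate near $\partial K$. With this extra step your counting goes through, at the cost of doubling the constant $c$, which is harmless.
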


\begin{proof}

Let $\epsilon>0$. As the sequence of cubes $\mathcal C= (C_n)_n$ defined by $C_n=[-n,n[^d\cap\mathbb Z^d$ is a F\"olner sequence, there is
a positive integer $m$ such that $\frac{H_{top}\left (\mathsf P_{C_m}\right)}{\sharp C_m} < h_{top}(\sigma)+\epsilon$.  Then for some $c=c(m)>0$ we may cover $\underline{J\setminus K}$ by a family  $\mathcal F$ at most
$\frac{\sharp \underline{J\setminus K} +cp(J\oplus \mathsf C)}{\sharp C_m}$ disjoint translated copies of $C_m$. Indeed if $\mathsf R_m$ denotes a partition of $\mathbb{R}^d$ into translated copies of $C_m$, then any atom $A$ of $\mathsf R_m$ with $\underline{A}\cap \left(J\setminus K\right)\neq \emptyset$ either satisfies $\underline{A}\subset  J\setminus K$ or $\underline{A}\cap \left(\partial^-_{C_m}J\cup \partial^-_{C_m}K\right)\neq \emptyset$. Clearly the number of $A$'s in the first case is less than $\frac{\sharp \underline{J\setminus K}}{\sharp C_m}$, whereas  the numbers of atoms $A$ satisfying the second condition is less than $\sharp 
\underline{\partial^-_{C_m}J}+\sharp \underline{
\partial^-_{C_m}K}$. Arguing as in the proof of Proposition \ref{fin}, this 
last term is less than $c\left(p(J\oplus \mathsf C)+p(K\oplus\mathsf C)\right)$ for some constant 
$c$ depending on $m$. As $K$ is contained in $J$ we have $p(J\oplus 
\mathsf C)\leq p(K\oplus \mathsf C)$.

Therefore 
\begin{align*}
H_{top}(\mathsf P_{ J\setminus K})&\leq\left(\sharp \underline{J\setminus K}+2cp(J\oplus \mathsf C)\right)\frac{H_{top}\left(\mathsf P_{C_m}\right)}{\sharp C_m},\\
&\leq\left(\sharp \underline{J\setminus K}+2cp(J\oplus \mathsf C)\right)\cdot  (h_{top}(\sigma)+\epsilon).
\end{align*}
\end{proof}

We refine now the inequality obtained in Lemma \ref{rutop} at the level of invariant measures : 
\begin{lemma}
$$\forall \mu\in \mathcal M(f), \ h_\mu(f, O)\leq h_{top}(\sigma)\int \chi_O\,d\mu.$$
 \end{lemma}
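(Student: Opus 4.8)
The goal is to bound $h_\mu(f,\mathcal J)$ for an arbitrary convex exhaustion $\mathcal J=(J_n)_n\in\mathcal E(O)$ and then pass to the supremum over such $\mathcal J$. The key structural input is Lemma \ref{entropie}: for each $n$, $h_\mu(f,\mathsf P_{J_n})=h_\mu(f,\mathsf P_{\partial^-_I J_n})$. But rather than working with $\partial^-_I J_n$ directly, I would work with the finer data recorded by $\gr_{J_n}f$. By definition of $\mathcal E_f(x,J_n)$ and $\gr_{J_n}f$, for each atom $A$ of $\mathsf P_{J_n}$ there is a convex body $K_A\in\mathcal D_f(A,J_n)$, $K_A\subset J_n$, with $\sharp\underline{J_n\setminus K_A}=\gr_{J_n}f(A)$ and $f\mathsf P_{J_n}^x\subset\mathsf P_{K_A}^{fx}$ for $x\in A$. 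The plan is to iterate this: the partition $\bigvee_{l=0}^{k-1}f^{-l}\mathsf P_{J_n}$ refines a partition built from cylinders on shrinking convex sets, and the total ``number of extra coordinates'' one pays over $k$ iterates is controlled by the Birkhoff sums of $\gr_{J_n}f$ along the $f$-orbit.

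\textbf{Main steps.} First, I would establish a one-step entropy inequality of the shape
\[
H_\mu\!\left(\bigvee_{l=0}^{k}f^{-l}\mathsf P_{J_n}\right)\le H_\mu\!\left(\bigvee_{l=0}^{k-1}f^{-l}\mathsf P_{J_n}\Big|\ \cdot\ \right)+\text{(entropy of a partition into }\gr_{J_n}f^k\text{-many coordinates)},
\]
made precise by the following mechanism: using $f\mathsf P_{J_n}^x\subset\mathsf P_{K_x}^{fx}$ with $K_x\in\mathcal D_f(x,J_n)$, one sees that $f^{-1}\mathsf P_{J_n}\vee\mathsf P_{J_n}$ is refined by $\mathsf P_{J_n}\vee f^{-1}\mathsf Q$ where $\mathsf Q$ is the partition into $\underline{J_n\setminus K_\bullet}$-coordinates, which has at most $\gr_{J_n}f$ relevant coordinates on the relevant atom; iterating and using subadditivity of $H_\mu(\cdot)$ and the cocycle bound $\gr_{J_n}f^k(x)\le\sum_{j=0}^{k-1}\gr_{J_n}f(f^jx)+(\text{lower order})$ coming from the subadditivity lemma for $\gr_O f^k$, one gets
\[
H_\mu\!\left(\bigvee_{l=0}^{k-1}f^{-l}\mathsf P_{J_n}\right)\le H_\mu(\mathsf P_{J_n})+\sum_{j=0}^{k-1}H_\mu\big(\text{coordinate partition of size }\approx\gr_{J_n}f^{?}(f^j\cdot)\big).
\]
Second, I would bound the entropy of each such coordinate partition using Lemma \ref{cov}: a partition into a set $S$ of $\le s$ lattice coordinates has topological, hence measure, entropy $\le (s+c\,p(J_n\oplus\mathsf C))(h_{top}(\sigma)+\epsilon)$ — here I need the coordinate set to sit inside $J_n$ so the perimeter correction is $O(p(J_n))$. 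Dividing by $k$, letting $k\to\infty$, and using the subadditive ergodic theorem identity $\int\chi_O\,d\mu=\lim_k\frac1k\int\gr_O f^k\,d\mu$ together with $\frac1{p(J_n)}\gr_{J_n}f^k\le\frac1{p(J_n)}\sum\gr_{J_n}f\circ f^j$, one obtains
\[
\frac{h_\mu(f,\mathsf P_{J_n})}{p(J_n)}\le (h_{top}(\sigma)+\epsilon)\Big(\limsup_k\frac1k\sum_{j=0}^{k-1}\frac{\gr_{J_n}f(f^j\cdot)}{p(J_n)}\text{-average}+o(1)\Big),
\]
and after taking $\limsup_n$, passing the integral through, recognizing $\limsup_n\frac1{p(J_n)}\int\gr_{J_n}f\,d\mu\le\int\gr_O f\,d\mu$ is not quite enough — one really wants $\int\chi_O\,d\mu$, so the Birkhoff/subadditive averaging must be done \emph{before} sending $n\to\infty$, i.e. for fixed $n$ apply the subadditive ergodic theorem to $(\gr_{J_n}f^k)_k$ and only then divide by $p(J_n)$ and take $\limsup_n$. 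Finally, taking the supremum over $\mathcal J\in\mathcal E(O)$ and then $\epsilon\to 0$ yields $h_\mu(f,O)\le h_{top}(\sigma)\int\chi_O\,d\mu$.

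\textbf{Main obstacle.} The delicate point is the bookkeeping of \emph{which} coordinates get added at each iterate and ensuring the cumulative count over $k$ steps is genuinely governed by $\gr_{J_n}f^k$ (the $k$-step growth) rather than by a crude $k\cdot\gr_{J_n}f$ bound, since the latter would only give $h_{top}(\sigma)\int\gr_O f\,d\mu$ and lose the sharpness of the Lyapunov exponent. This is exactly where the subadditivity lemma for $(\gr_O f^k)_k$ and the subadditive ergodic theorem must be invoked at the level of fixed $n$: the partition $\bigvee_{l=0}^{k-1}f^{-l}\mathsf P_{J_n}$ should be compared, via iterated application of the inclusions $f^j\mathsf P_{J_n}^x\subset\mathsf P_{K^{(j)}_x}^{f^jx}$ with $K^{(j)}_x\in\mathcal D_{f^j}(x,J_n)$, to $\mathsf P_{J_n}\vee f^{-(k-1)}(\text{coordinates in }J_n\setminus K^{(k-1)}_\bullet)$ — telescoping so only the final $\gr_{J_n}f^{k-1}$ coordinates, plus a boundary term, survive. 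Controlling the error terms (the $o(p(J_n))$ from $r(K_n\Delta J_n)\le\diam(I')$ and the $o(k)$ from subadditivity) uniformly enough to survive both limits is the real content; everything else is an assembly of Lemma \ref{entropie}, Lemma \ref{cov}, and the ergodic theorem.
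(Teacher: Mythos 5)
Your first half matches the paper: the one-step estimate $h_\mu(f,\mathsf P_{J_n})\leq (h_{top}(\sigma)+\epsilon)\left(\int {\gr}_{J_n}f\,d\mu + c\,p(J_n\oplus\mathsf C)\right)$, obtained by conditioning ($h_\mu(f,\mathsf P_{J_n})\leq H_\mu(f^{-1}\mathsf P_{J_n}\mid\mathsf P_{J_n})$, choosing $K_A\in\mathcal D_f(A,J_n)$ atomwise and invoking Lemma \ref{cov}), is exactly how the paper starts, and the passage of $\limsup_n$ inside the integral via the uniform bound of Proposition \ref{fin} is also there. The gap is in your mechanism for upgrading $\int{\gr}_Of\,d\mu$ to $\int\chi_O\,d\mu$. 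You propose to telescope $\bigvee_{l<k}f^{-l}\mathsf P_{J_n}$ at fixed $n$ and then ``apply the subadditive ergodic theorem to $({\gr}_{J_n}f^k)_k$ for fixed $n$''. Two things break. First, $({\gr}_{J_n}f^k)_k$ is not a subadditive cocycle for fixed $J_n$: the paper's subadditivity lemma gives ${\gr}_{J_n}f^{k+l}(x)\leq{\gr}_{J_n}f^k(x)+{\gr}_{K_n}f^l(f^kx)$ with a strictly smaller body $K_n$, and subadditivity is only recovered for ${\gr}_Of^k$ after the supremum over exhaustions and the $\limsup$ in $n$; so Kingman does not apply at fixed $n$. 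Second, even if some limit existed, the order of limits you insist on (first $k\to\infty$, then $n\to\infty$) yields the wrong quantity: since ${\gr}_{J_n}f^k\leq\sharp\underline{J_n}$ for every $k$, one has $\frac1k{\gr}_{J_n}f^k\to0$ as $k\to\infty$ with $n$ fixed, so nothing proportional to $\chi_O$ survives — only the boundary terms do — whereas $\chi_O$ is defined through ${\gr}_Of^k$, in which $n\to\infty$ is taken \emph{before} $k\to\infty$. Relatedly, your telescoping claim that ``only the final ${\gr}_{J_n}f^{k-1}$ coordinates survive'' is not correct: the partition $\mathsf P_{J_n}\vee f^{-(k-1)}\mathsf P_{J_n\setminus K^{(k-1)}}$ does not refine $\bigvee_{l<k}f^{-l}\mathsf P_{J_n}$, because the intermediate iterates $f^lx$, $0<l<k-1$, carry coordinates on $J_n\setminus K^{(l)}$ that are determined neither by $x|_{J_n}$ nor by the final block; an honest telescoping produces a sum over $j<k$ of such contributions, i.e.\ either the crude $k\cdot\int{\gr}_{J_n}f\,d\mu$ bound you rightly want to avoid, or a Cesàro sum of $\int{\gr}_{J_n}f^j\,d\mu$ which at fixed $n$ is only bounded by $\sharp\underline{J_n}\sim p(J_n)^{d/(d-1)}$, too large after dividing by $p(J_n)$.

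The missing idea is the power formula, Lemma \ref{zdz}: $h^d_\mu(f^k,O)=k\,h^d_\mu(f,O)$. The paper never iterates inside a fixed $J_n$; it applies the one-step bound to the CA $f^k$ (whose growth function is ${\gr}_{J_n}f^k$, with a single additive boundary error $c\,p(J_n\oplus\mathsf C)$, not $k$ of them), takes $\limsup_n$ and the supremum over $\mathcal E(O)$ at fixed $k$ to get $h^d_\mu(f^k,O)\leq(h_{top}(\sigma)+\epsilon)\left(\int{\gr}_Of^k\,d\mu+c\right)$, divides by $k$ using Lemma \ref{zdz}, and only then lets $k\to\infty$, where Kingman legitimately applies to the genuinely subadditive sequence $({\gr}_Of^k)_k$ and produces $\int\chi_O\,d\mu$, the constant $\frac{c}{k}$ vanishing in the limit. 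Without this device (absent from your plan), the error terms and the limit interchange you flag as the ``real content'' cannot be controlled, so as written the proposal does not close.
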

 
 \begin{proof}
 For any convex domain $J$ and any $\mu\in \mathcal M (f)$ we have 
\begin{align*}
h_\mu(f, \mathsf P_J)& \leq H_\mu(f^{-1}\mathsf P_J| \mathsf P_J),\\
& \leq \sum_{A\in \mathsf P_J}\mu(A)H_{\mu_A}(f^{-1}\mathsf P_J).
\end{align*}
 Fix $\epsilon>0$ and let $c$ be as in Lemma \ref{cov}.  Then if $(K_A)_{A\in \mathsf P_J}$ is a family of convex bodies in $\prod_{A\in \mathsf P_J}\mathcal E_f(A, J)$ with $K_A \subset J$ for all $A$ we obtain
\begin{align*}
h_\mu(f, \mathsf P_J)& \leq \sum_{A\in \mathsf P_J}\mu(A)H_{\mu_A}(f^{-1}\mathsf P_{J \setminus K_A}),\\
&\leq \sum_{A\in \mathsf P_J}\mu(A) H_{top}(\mathsf P_{J\setminus K_A}),\\
&\leq \sum_{A\in \mathsf P_J}\mu(A) \left(\sharp \underline{J\setminus K_A}+cp(J\oplus \mathsf C)\right)\cdot (h_{top}(\sigma)+\epsilon).
\end{align*}
By choosing $K_A$ with $\sharp \underline{ J\setminus K_A}$ minimal we obtain 
\begin{align*}
h_\mu(f, \mathsf P_J)&\leq   \left(h_{top}(\sigma)+\epsilon\right)\cdot \left( \int {\gr}_Jf \, d\mu +cp(J\oplus \mathsf C) \right).
\end{align*}

Therefore we have for any convex exhaustion $\mathcal J=(J_n)_n$ (recall that $p(J_n\oplus \mathsf C)\sim^n p(J_n)$) :  
\begin{align*}
h^d_\mu(f,\mathcal J)&=\limsup_n\frac{h_\mu(f, \mathsf P_J)}{p(J_n)},\\
&\leq   \left(h_{top}(\sigma)+\epsilon\right)\cdot \left( \limsup_n\int  \frac{{\gr}_{J_n}f}{p(J_n)}\, d\mu +c \right).
\end{align*}
By Proposition \ref{fin} we have for all $x\in X$
\[\sup_{n\in \mathbb{N}} \frac{{\gr}_{J_n}f(x)}{p(J_n)}\leq \sup_{n\in \mathbb{N}} \frac{\sharp \underline{\partial^-J_n}}{p(J_n)}<+\infty.\]
We may therefore apply Fatou's Lemma to the sequence of functions $\left(-\frac{{\gr}_{J_n}f}{p(J_n)}\right)_n$ :
\[\limsup_n\int  \frac{{\gr}_{J_n}f}{p(J_n)}\, d\mu\leq \int \limsup_n \frac{{\gr}_{J_n}f}{p(J_n)}\, d\mu,\]
then 
\begin{align*}
h^d_\mu(f,\mathcal J) &\leq \left( h_{top}(\sigma)+\epsilon\right)\left(\int {\gr}_{\mathcal J}f  \, d\mu+c\right).
\end{align*}
By taking the supremum over $\mathcal J\in  \mathcal E(O)$ we get 
\begin{align*}
h^d_\mu(f,O)& \leq \left( h_{top}(\sigma)+\epsilon\right)\left( \int {\gr}_{O}f \, d\mu+c\right).
\end{align*}
By Lemma \ref{zdz} we have $\frac{h^d_\mu(f^k, O) }{k}=h^d_\mu(f, O)$ for any $k$. 
Apply the above inequality to $f^k$ :
\begin{align*}
h^d_\mu(f,O)&\leq  \left( h_{top}(\sigma)+\epsilon\right)\left( \int  \frac{{\gr}_{O}f^k}{k}  \, d\mu+\frac{c}{k}\right). 
\end{align*}
When $k$ goes to infinity and then $\epsilon$ goes to zero, we conclude $h^d_\mu(f,O)\leq h_{top}(\sigma)\int \chi_O\, d\mu$.

 \end{proof}

\section{Entropy formula for permutative CA}\label{las} 
The cellular automaton $f$ is said \textbf{permutative} at $i\in \mathbb Z^d$ if for all pattern $P$ on $I\setminus \{i\}$  and for all $a\in \mathcal A$ there is $b\in \mathcal A$ such that the pattern $P^i_b$ on $I\cup\{i\}$ given by the completion of $P$ at $i$ by $b$ satisfies $F(P^i_b)=a$, in particular $i$ belongs to the domain $I$ of $f$.   The CA is said permutative when it is permutative at the nonzero extreme points of the convex hull $\mathbb I$  of $I'=I\cup \{0\}$ (these points lie in $I$). The algebraic CA as described in the introduction are permutative.  

\begin{prop}\label{form}
The topological rescaled entropy of a permutative  CA $f$ on $X_d$ is given by 
$$h^{d}_{top}(f)=R_{I'}\log \sharp \mathcal A.$$ 
\end{prop}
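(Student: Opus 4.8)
The plan is to prove the two inequalities separately. For $h^{d}_{top}(f)\le R_{I'}\log\sharp\mathcal A$ there is nothing new to do: by definition $h^{d}_{top}(f)=\sup_{O\in\mathcal D^1}h^{d}_{top}(f,O)$, Proposition~\ref{rutop} gives $h^{d}_{top}(f,O)\le V_{\mathbb I}(O)\log\sharp\mathcal A$, and Proposition~\ref{supe} gives $\sup_{O\in\mathcal D^1}V_{\mathbb I}(O)=R_{\mathbb I}$; finally $R_{\mathbb I}=R_{I'}$ since $\mathbb I=\cv(I')$ and $I'$ have the same smallest bounding sphere. If $I'=\{0\}$ then $h_{top}(f,\mathsf P_J)=0$ for all $J$ and $R_{I'}=0$, so from now on assume $\sharp I'\ge 2$, i.e. $R_{I'}>0$.

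The whole content of the lower bound is the following lemma, in which permutativity enters: \emph{if $g$ is a permutative cellular automaton on $X_d$ with domain $J$ and $\mathbb J:=\cv(J\cup\{0\})$, then $h_{top}(g,\mathsf P_P)\ge\sharp\underline{\partial^-_J P}\,\log\sharp\mathcal A$ for every convex body $P$ of $\mathbb R^d$.} To prove it I would use that $m\mapsto\log\sharp\bigl(\bigvee_{l=0}^{m-1}g^{-l}\mathsf P_P\bigr)$ is subadditive, so $h_{top}(g,\mathsf P_P)\ge\liminf_m\underline H_m$, where $\underline H_m$ is the logarithm of the least number of atoms of $\bigvee_{l=0}^{m}g^{-l}\mathsf P_P$ contained in a single atom $\mathcal C=\{y:\ g^l y|_P=p_l,\ 0\le l\le m-1\}$ of $\bigvee_{l=0}^{m-1}g^{-l}\mathsf P_P$. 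For such a $\mathcal C$ the number of these atoms is the number of distinct patterns $g^{m}y|_P$ with $y\in\mathcal C$, and since $g^{m}y|_{P\ominus J}$ is a function of $g^{m-1}y|_P=p_{m-1}$, it equals the number of patterns $g^{m}y|_{\partial^-_J P}$. I claim \emph{all} $(\sharp\mathcal A)^{\sharp\underline{\partial^-_J P}}$ of them occur: convexity of $P$ guarantees that for every $j\in\underline{\partial^-_J P}$ there is a nonzero extreme point $v_j$ of $\mathbb J$ with $j+v_j\notin P$ (write the witness $v'\in\ex(\cv J)$ with $j+v'\notin P$ as a convex combination of extreme points of $\mathbb J$), and $g$ is permutative at $v_j$; enumerating $\underline{\partial^-_J P}$ so that the choice of the free coordinate $(g^{m-1}y)_{j+v_j}$ fixing $(g^{m}y)_j$ never destroys an earlier value — the interference‑free ordering of the proof of Lemma~\ref{prof} — and using surjectivity of $g$ to realise the needed values of $g^{m-1}y$ by an actual $y\in\mathcal C$, one prescribes $g^{m}y|_{\partial^-_J P}$ at will. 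Hence $\underline H_m\ge\sharp\underline{\partial^-_J P}\log\sharp\mathcal A$ for every $m$. I expect this ordering/lifting step (realising prescribed values of the $(m-1)$‑st iterate while staying inside $\mathcal C$) to be the main obstacle, since it is where the combinatorics of permutative local rules really has to be handled.

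I would then combine this lemma with the power formula $h^{d}_{top}(f^k)=k\,h^{d}_{top}(f)$ (Lemma~\ref{zdz}), which is essential because it turns the dimensional error $C=C(d)$ of Lemma~\ref{grg} into $C/k$. Note that $f^k$ is again permutative: every nonzero extreme point of $\mathbb I_k:=\cv(I_k\cup\{0\})$ equals $k\,i^{*}$ for some $i^{*}\in\ex(\mathbb I)\setminus\{0\}$ (because $I'_k$ is contained in the $k$‑fold sum of $I'$, and a unique maximiser of a linear form over $I'_k$ forces each summand to be the unique maximiser $i^{*}$ over $I'$), at which $f$, hence $f^k$, is permutative. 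Assume first $S_{I'}$ is nondegenerated, fix a generating polytope $T$ of $S_{I'}$ and its dual $T'$. Applying the lemma to $g=f^k$ and $P=nT'$, dividing by $p(nT')$ and letting $n\to\infty$,
\[ h^{d}_{top}(f^k)\ \ge\ h^{d}_{top}(f^k,\mathcal J_{T'})\ \ge\ \Bigl(\limsup_n\tfrac{\sharp\underline{\partial^-_{I_k}nT'}}{p(nT')}\Bigr)\log\sharp\mathcal A\ \ge\ \Bigl(\tfrac{V_{\mathbb I_k}(T')}{p(T')}-C\Bigr)\log\sharp\mathcal A, \]
the last step by Lemma~\ref{grg} (using $\partial^-_{I_k}nT'=\partial^-_{\mathbb I_k}nT'$). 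Each vertex $w$ of $T$ lies in $\ex(\mathbb I)\setminus\{0\}$ and is a permutative point of $f$, so $kw\in I_k$, hence $\mathbb I_k\supseteq\cv(\{0\}\cup k\,\ex(T))=kT$ and $V_{\mathbb I_k}(T')\ge V_{kT}(T')=k\,V_{T}(T')=k\,R_{I'}\,p(T')$, the last equality being the computation $\int h_T\,d\sigma_{T'}=R_{I'}p(T')$ from the proof of Proposition~\ref{supe}. Therefore $k\,h^{d}_{top}(f)=h^{d}_{top}(f^k)\ge(kR_{I'}-C)\log\sharp\mathcal A$, and letting $k\to\infty$ gives $h^{d}_{top}(f)\ge R_{I'}\log\sharp\mathcal A$.

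When $S_{I'}$ is degenerated the argument is identical with $T'$ replaced by the polytopes $T'_R$ of Section~\ref{gros} and with one extra limit. The vertices of the generating polytope $L$ of $I'\cap H_l$ again lie in $\ex(\mathbb I)\setminus\{0\}$ and are permutative points of $f$, so $\mathbb I_k\supseteq kL$; moreover $h_L$ equals $R_{I'}$ on the outer normals of the faces in $\mathcal F_1(T'_R)$ and vanishes on those of the faces in $\mathcal F_2(T'_R)$, whose total $(d-1)$‑measure is $o(p(T'_R))$, so $V_{\mathbb I_k}(T'_R)\ge k\,V_L(T'_R)=k\,R_{I'}\bigl(p(T'_R)-o(p(T'_R))\bigr)$. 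As above this yields $h^{d}_{top}(f)\ge R_{I'}\bigl(1-o_R(1)\bigr)\log\sharp\mathcal A$, and letting $R\to\infty$ finishes the lower bound. Together with the upper bound, $h^{d}_{top}(f)=R_{I'}\log\sharp\mathcal A$.
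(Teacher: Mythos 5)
Your global strategy is the paper's: upper bound from Proposition \ref{rutop} plus Proposition \ref{supe}, lower bound by showing that permutativity lets one prescribe freely the symbols of the image on the inner morphological boundary, then Lemma \ref{grg}, the choice of the dual polytopes $T'$ (resp. $T'_R$ in the degenerated case) and the power trick of Lemma \ref{zdz} with $\mathbb I_k=k\mathbb I$ to absorb the constant $C(d)$. But the heart of the lower bound — your italicized lemma asserting $h_{top}(g,\mathsf P_P)\geq \sharp\underline{\partial^-_J P}\log\sharp\mathcal A$ for \emph{every} convex body $P$ — is exactly the step you leave open, and as sketched it has two concrete problems. First, the lifting step is not just hard, it is wrong as stated: after modifying $z=g^{m-1}y$ at the coordinates $j+v_j\notin P$, surjectivity of $g^{m-1}$ only produces \emph{some} preimage $y'$ of the modified configuration; nothing forces $g^l y'|_P=p_l$ for $l<m-1$, so $y'$ need not lie in the atom $\mathcal C$ and the count of refined atoms inside $\mathcal C$ is not obtained. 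The paper avoids this entirely by never touching intermediate images: it modifies the original configuration $w$ itself at the far coordinates $j+k u^{F_j}$, using that $f^k$ is permutative at $k u^{F_j}$ (Lemma \ref{pwr}) — this is why the whole argument is run directly for the $k$-th iterate (Lemma \ref{zer}) rather than one step at a time. Second, the ``interference-free ordering'' you invoke is not the ordering of Lemma \ref{prof}: that ordering only guarantees injectivity of a single-translate map, whereas here you need $j+v_j\notin\{j'\prec j\}\oplus I'$ (at scale $k\mathbb I$ in the paper). The paper's version of this statement (Lemma \ref{encore}) crucially uses that the maximizing extreme point $u^{F}$ is \emph{unique} on the face considered, which is why it only controls $\partial^{\bot}_{\mathbb I}$ and not all of $\partial^-_{\mathbb I}$; the missing coverage is then recovered geometrically by Lemma \ref{lat} ($\mathcal F(T')=\mathcal F_{\mathbb I}(T')$, and the $\mathcal F_2$-faces of $T'_R$ contribute $o(p(T'_R))$ lattice points). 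Your stronger claim for arbitrary convex $P$ and the full $\partial^-$ may or may not be true, but no ordering is exhibited, and none follows from the lemmas you cite.

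A smaller but real issue: your justification that $f^k$ is permutative and that $\mathbb I_k\supseteq kT$ does not close. Knowing only $I'_k\subset I'\oplus\cdots\oplus I'$, a linear form uniquely maximized over $\mathbb I_k$ at an extreme point $e$ need not be maximized over the (larger) $k$-fold sumset at $e$, so you cannot conclude that the summands of a decomposition of $e$ are maximizers over $I'$; a priori $k i^{*}$ could simply be absent from $I'_k$ for a non-permutative rule, and ruling this out for permutative $f$ is precisely the inductive content of Lemma \ref{pwr} (permutativity of $f^{k}$ at $ki$ for $i\in\ex(\mathbb I)\setminus\{0\}$, whence $k\ex(\mathbb I)\subset I'_k$ and $\mathbb I_k=k\mathbb I$), which you assert but do not prove. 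In short: the assembly around the key lemma matches the paper, but the key lemma itself — the realization of arbitrary boundary patterns inside an atom — is missing, and the surjectivity-plus-ordering route you propose for it does not work without the iterate-level permutativity and the restriction to $\partial^{\bot}_{\mathbb I}$ (or some genuinely new argument handling faces with non-unique maximizing extreme points).
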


The sets $I'$ and $\mathbb I$ have the same smallest bounding sphere, thus $R_{I'}=R_{\mathbb I}$. Theorem \ref{pm},  stated in the introduction,  follows from Proposition \ref{form}.

\begin{ques}
For a permutative CA, the uniform measure $\lambda^{\mathbb{Z}^d}$ with $\lambda$ being the uniform measure on $\mathcal A$ is known to be invariant \cite{Wi}.
Does the uniform measure maximize the entropy ?
\end{ques}

Recall that for any $k\in \mathbb N\setminus\{0\}$ we denote by $I_k$ the domain of $f^k$ and $\mathbb I_k$ the convex hull of $I'_k=I_k\cup \{0\}$. In the following we also let $C(P,L)=\{(x_i)_{i\in \mathbb Z^d}\in X, \ x_j=p_j \, \forall j\in L \}$ be the cylinder associated to the pattern $P=(p_j)_{j\in L}\in \mathcal A^L$ on  $L\subset \mathbb Z^d$. We also write $C(P) $ for this cylinder when there is no confusion on $L$. 

\begin{lemma}\label{pwr}For any permutative CA $f$ and any $k\in \mathbb N\setminus\{0\}$,  the CA $f^k$ is also permutative and 
$$\mathbb I_k=k\mathbb I.$$
\end{lemma}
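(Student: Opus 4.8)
The statement has two parts: first that $f^k$ is permutative, and second the exact identity $\mathbb I_k = k\mathbb I$ for the convex hulls of the domains (together with $0$). The plan is to establish the two parts together by induction on $k$, the base case $k=1$ being trivial, and to reduce to the case $k=2$ (i.e. show that if $f$ is permutative with convex hull $\mathbb I$ and $g$ is permutative with convex hull $\mathbb J$ — both containing $0$ — then $f\circ g$ is permutative with convex hull $\mathbb I\oplus\mathbb J$), since $I_{k+1}'$ is controlled by composing $f$ with $f^k$. We already know $I_k \subset \underbrace{I\oplus\cdots\oplus I}_{k}$, hence $\mathbb I_k\subset k\mathbb I$; the content is the reverse inclusion and the permutativity.

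The key observation is what happens to extreme points under Minkowski sum. If $v$ is a nonzero extreme point of $\mathbb I\oplus\mathbb J$ then there is a direction $u\in\mathbb S^d$ with $v = \arg\max_{w\in \mathbb I\oplus\mathbb J} w\cdot u$, and $v = v_1+v_2$ where $v_i$ is the unique maximizer of $w\mapsto w\cdot u$ in $\mathbb I$ (resp.\ $\mathbb J$); moreover $v_1\in I'$, $v_2\in I'$ realize these maxima and at least one of them is nonzero. I would then show directly from the definition of permutativity at a point: to compute $(f\circ g\, x)_0 = F\big((g x)_{i}\big)_{i\in I_f}$, I want to show that varying the single coordinate $x_{v_1+v_2}$ (with all other coordinates in the relevant finite window fixed) surjects onto $\mathcal A$. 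The coordinate $x_{v_1+v_2}$ enters $(gx)_{v_1}$ as $G\big((x_{v_1+i})_{i\in I_g}\big)$ at the extreme position $i=v_2$ of $I_g'$, and by permutativity of $g$ at $v_2$ (or, if $v_2=0$, using that $g$ depends on the $0$-coordinate permutatively — here one uses that $v_1$ is then a nonzero extreme point of $\mathbb I$) we can hit any prescribed value of $(gx)_{v_1}$ by a suitable choice of $x_{v_1+v_2}$ without affecting $(gx)_{w}$ for $w\neq v_1$ in the window $I_f$ — because $x_{v_1+v_2}$ sits at an \emph{extreme} lattice point, so no other translate $w+I_g$ with $w\in I_f$ contains it once one checks $v_1$ is extreme in $I_f\oplus I_g'$ relative to $u$. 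Then permutativity of $f$ at $v_1$ (an extreme point of $\mathbb I$) finishes: varying $(gx)_{v_1}$ surjects $F$ onto $\mathcal A$. This simultaneously shows $f\circ g$ is permutative at $v$ and that $v\in I_{f\circ g}$, giving $\ex(\mathbb I\oplus\mathbb J)\subset I_{f\circ g}'$, hence $\mathbb I\oplus \mathbb J = \cv(\ex(\mathbb I\oplus\mathbb J))\subset \mathbb I_{f\circ g}\subset \mathbb I\oplus\mathbb J$, i.e.\ equality.

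The main obstacle I anticipate is the bookkeeping at the boundary case $v_2=0$ (or $v_1=0$): one must carefully argue that the coordinate one wishes to vary genuinely appears at an extreme position of exactly one of the constituent local rules and nowhere else in the composed window, so that the perturbation propagates cleanly without side effects. This is where the precise definition of the domain as the \emph{smallest} such set, and the fact that extreme points of a Minkowski sum decompose uniquely into extreme points of the summands selected by a common supporting direction, both get used. Everything else is a routine induction: apply the $k=2$ case to $f$ and $f^k$, use $\mathbb I_1 = \mathbb I$, and conclude $\mathbb I_{k+1} = \mathbb I\oplus \mathbb I_k = \mathbb I\oplus k\mathbb I = (k+1)\mathbb I$, with permutativity of $f^{k+1}$ inherited along the way.
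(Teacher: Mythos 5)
Your core mechanism is the right one, and it is essentially the paper's: write a nonzero extreme point $v$ of the Minkowski sum uniquely as $v=v_1+v_2$, use permutativity of the inner map at $v_2$ to prescribe the inner output at position $v_1$ by varying the single coordinate at $v_1+v_2$ (which, being an extreme lattice position with unique decomposition, influences none of the other inner outputs read by the outer rule), and then apply permutativity of the outer map at $v_1$; the paper runs exactly this induction with outer $f$, inner $f^{k-1}$, $v_1=i$, $v_2=(k-1)i$. However, the intermediate statement you reduce to --- for arbitrary permutative CA $f,g$ the composition $f\circ g$ is permutative with hull $\mathbb I\oplus\mathbb J$ --- is false, and the failure is precisely the ``boundary case'' $v_1=0$ or $v_2=0$ that you describe as bookkeeping. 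Take $d=1$, $\mathcal A=\mathbb F_2$, let $g$ be the shift with local rule on $I_g=\{1\}$ (so $\mathbb J=[0,1]$, permutative at $1$), and let $f$ have local rule $F(x_{-1},x_0)=x_{-1}+x_0$ on $I_f=\{-1,0\}$ (so $\mathbb I=[-1,0]$, permutative at $-1$). Then $\bigl((f\circ g)x\bigr)_j=x_j+x_{j+1}$, so the domain of $f\circ g$ is $\{0,1\}$ and its hull is $[0,1]$, not $[-1,1]=\mathbb I\oplus\mathbb J$: the extreme point $-1=(-1)+0$, with $v_2=0$, is not even in the domain of the composition. Your proposed fix for that case --- ``using that $g$ depends on the $0$-coordinate permutatively'' --- is not an available hypothesis (permutativity is only assumed at \emph{nonzero} extreme points), and in this example $g$ does not depend on the $0$-coordinate at all.

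The gap is repairable, and the repair lands exactly on the paper's proof: claim only that $f\circ g$ is permutative at those extreme points $v$ of $\mathbb I\oplus\mathbb J$ whose unique decomposition $v=v_1+v_2$ has $v_1$ a nonzero extreme point of $\mathbb I$ and $v_2$ a nonzero extreme point of $\mathbb J$; your main argument does prove this restricted claim, and it gives $v\in I'_{f\circ g}$. In the inductive application $g=f^{k-1}$ with $\mathbb I_{k-1}=(k-1)\mathbb I$, every nonzero extreme point of $k\mathbb I$ is of the form $ki$ with $i\in\ex(\mathbb I)\setminus\{0\}$ and decomposes as $i+(k-1)i$ with both parts nonzero, so the boundary case never occurs; combined with the trivial inclusion $\mathbb I_k\subset k\mathbb I$ this yields $\mathbb I_k=k\mathbb I$ and permutativity of $f^k$ at the nonzero extreme points of $\mathbb I_k$. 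With that restriction your plan is correct and coincides with the argument in the paper.
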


\begin{proof}As already observed, the inclusion  $\mathbb I_k\subset k\mathbb I$ holds for any CA (not necessarily permutative). We will show  $k\ex(\mathbb I)\subset  I'_k$, which implies together with $\mathbb I_k\subset k\mathbb I$ the equality $\mathbb I_k= k\mathbb I$. Let $i\in \ex(\mathbb I)\setminus \{0\}\subset I$. For a fixed $k$ we prove by induction on $k$ that $f^k$ is permutative at $ki$, in particular $ki\in I'_k$. Let $P$ be a pattern on $I_k\setminus \{ki\}$ and let $a\in \mathcal A$. Since we have $I_k\subset I_{k-1}\oplus I$, we may complete $P$ by a pattern $Q$ on $ \left(I_{k-1}\oplus I\right)\setminus \{ki\}$. By induction hypothesis, $(k-1)i$ lies in $ \ex(\mathbb I_{k-1})$ and $i$ lies in $\ex(\mathbb I)$, 
 therefore $ki$ does not belong to $I_{k-1}\oplus \left( I\setminus\{i\}\right)$, so that we have $I_{k-1}\oplus \left( I\setminus\{i\}\right)\subset \left(I_{k-1}\oplus I\right)\setminus \{ki\}$. Therefore there  is a pattern $R$ on $I\setminus \{i\}$ such that   $f^{k-1}C\left(Q,  \left(I_{k-1}\oplus I\right)\setminus \{ki\} \right)$ is contained in the cylinder $C(R,I\setminus\{i\})$. As $f$ is permutative at $i$ there is $b\in \mathcal A$ with $F(R_b^i)=a$ or in other terms $f\left(C(R_b^i,I)\right)\subset C\left(a,\{0\}\right)$. Since $f^{k-1}$ is permutative at $(k-1)i$, we may find $c\in \mathcal A$ with $f^{k-1}\left(C(Q^{ki}_c,I_{k-1}\oplus I)\right)\subset C\left(b, \{i\}\right)$. Therefore we get 
 \[f^k\left(C(Q^{ki}_c,I_{k-1}\oplus I)\right)\subset f\left( C(R^i_b,I)\right)\subset   C\left(a, \{0\}\right).\] But $I_k$ is the domain of $f^k$ and $P$ is the restriction of $Q$ to $I_k\setminus \{ki\}$, so that we also have 
 $f^k\left(C(P^{ki}_c,I_{k})\right)\subset C\left(a, \{0\}\right)$, i.e. $f^k$ is permutative at $ki$. 
\end{proof}

For a convex $d$-polytope $J$ and a face $F$ of $J$ we consider the subset of $\partial_{\mathbb I}^-J$ given by   $\partial^-_{\mathbb I}F:=\partial_{\mathbb I}^-J\cap T_F^+J(-h_{\mathbb I}(N^F))$. 
The sets $\partial^-_{\mathbb I}F$ for $F\in \mathcal F(J)$ are covering $\partial^-_{\mathbb I}J$ but do not define a partition in general. 
For any $F\in \mathcal F(J)$ we let $u^F\in \ex(\mathbb I)\subset I'$ with $u^F\cdot N^F=h_{\mathbb I}(N^F)$ and we also let $d_F$ be the the Euclidean distance to $T_F$. 
Then for $j\in\underline{\partial^-_{\mathbb I}J}$ we let $F_j$ be a face of $J$ such that  $d_{F_j}(j+u^{F_j})=-d_{F_j}(j)+u^{F_j}\cdot N^{F_j}$ is maximal among faces $F$ with $j\in \partial^-_{\mathbb I}{F}$. We consider then a total order $\prec$ on $\underline{\partial^-_{\mathbb I}J}$ such that $i\prec j$ if $d_{F_{i}}(i+u^{F_{i}})<d_{F_j}(j+u^{F_j})$.  We  also let $\mathcal F_{\mathbb I}(J)$ be the subset of $\mathcal F(J)$ given by faces $F$ for which $u_F$ is uniquely defined.  We denote by $\partial_{\mathbb I}^{\bot}J$ the subset of $\partial^-_{\mathbb I}J$ given by  $$\partial_{\mathbb I}^{\bot}J:=\bigcup_{F\in \mathcal F_{\mathbb I}(J)}\partial^-_{\mathbb I}F.$$

\begin{lemma}\label{encore}With the above notations, let $j\in \underline{\partial_{\mathbb I}^{\bot}J}$. Then $$\forall k\in \mathbb N, \ j+ku^{F_j} \notin \{j',\, j'\prec j\}\oplus k\mathbb I.$$ 
\end{lemma}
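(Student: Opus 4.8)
The plan is to argue by contradiction and to reduce the whole assertion to the behaviour of $\mathbb I$ under the single linear form $N^{F_j}$. First I would rewrite the conclusion: since $k\mathbb I$ is the dilate $\{km:\,m\in\mathbb I\}$, an equality $j+ku^{F_j}=j'+w$ with $w\in k\mathbb I$ amounts to $u^{F_j}+\tfrac1k(j-j')\in\mathbb I$, so the lemma is equivalent to saying that for every $j'\in\underline{\partial^-_{\mathbb I}J}$ with $j'\prec j$ and every $k\geq 1$ one has $u^{F_j}+\tfrac1k(j-j')\notin\mathbb I$ (the case $k=0$ is trivial since $j\not\prec j$). So suppose $v:=u^{F_j}+\tfrac1k(j-j')\in\mathbb I$ for some such $j'$ and $k$. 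Write $N:=N^{F_j}$ and $u:=u^{F_j}\in\ex(\mathbb I)$; recall $u\cdot N=h_{\mathbb I}(N)$ and, because $j\in\partial^-_{\mathbb I}F_j$, that $d_{F_j}(j)<h_{\mathbb I}(N)$.

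The core of the proof is to pair $v\in\mathbb I$ with $N$. From $v\cdot N\leq h_{\mathbb I}(N)=u\cdot N$ and $v-u=\tfrac1k(j-j')$ we obtain $(j-j')\cdot N\leq 0$; since $j,j'$ both lie in the polytope $J$, on which $d_{F_j}(x)=h_J(N)-x\cdot N$, this reads $d_{F_j}(j')\leq d_{F_j}(j)$. In particular $d_{F_j}(j')<h_{\mathbb I}(N)$, so $j'\in\partial^-_{\mathbb I}F_j$, which makes $F_j$ one of the faces entering the definition of $F_{j'}$; hence
\[
-d_{F_{j'}}(j')+h_{\mathbb I}(N^{F_{j'}})\ \geq\ -d_{F_j}(j')+h_{\mathbb I}(N)\ \geq\ -d_{F_j}(j)+h_{\mathbb I}(N).
\]
The extreme terms are exactly the quantities $d_{F_{j'}}(j'+u^{F_{j'}})$ and $d_{F_j}(j+u^{F_j})$ that order $\prec$. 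If the second inequality were strict, i.e. $d_{F_j}(j')<d_{F_j}(j)$, then this quantity would be strictly larger at $j'$ than at $j$, forcing $j\prec j'$ and contradicting $j'\prec j$. Therefore $d_{F_j}(j')=d_{F_j}(j)$, so $(j-j')\cdot N=0$ and hence $v\cdot N=u\cdot N=h_{\mathbb I}(N)$: the point $v$ sits on the face of $\mathbb I$ exposed by $N=N^{F_j}$.

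It then remains to conclude that $v=u$, for then $j-j'=k(v-u)=0$, contradicting $j'\neq j$. This is precisely where the hypothesis $j\in\underline{\partial_{\mathbb I}^{\bot}J}$ should be used: I would want it to force $F_j\in\mathcal F_{\mathbb I}(J)$, i.e. that $u=u^{F_j}$ is the unique point of $\mathbb I$ maximising $x\mapsto x\cdot N^{F_j}$, after which $v\in\mathbb I$ with $v\cdot N=h_{\mathbb I}(N)$ immediately gives $v=u$. I expect this last step to be the only real obstacle: one has to check that among the faces $F$ with $j\in\partial^-_{\mathbb I}F$ realising the maximal depth $d_F(j+u^F)$ there is one lying in $\mathcal F_{\mathbb I}(J)$ whenever $j\in\partial_{\mathbb I}^{\bot}J$ (choosing $F_j$ to be such a face, and tie-breaking $\prec$ accordingly if necessary), which is a statement purely about the relative position of the faces of $J$ and of $\mathbb I$; everything else reduces to the elementary pairing computation above.
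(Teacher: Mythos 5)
Your argument is the paper's own proof, lightly repackaged: instead of comparing the distances $d_{F_j}(j+ku^{F_j})$ and $d_{F_j}(j'+ku)$ to $T_{F_j}$ as the paper does, you divide by $k$ and test the single point $v=u^{F_j}+\frac{1}{k}(j-j')\in\mathbb I$ against $N^{F_j}$; but the two inequalities you exploit (the maximality defining $F_{j'}$ once one knows $j'\in\partial^-_{\mathbb I}F_j$, and the constraint imposed by the order $\prec$) are exactly those of the paper, and both arguments end by invoking uniqueness of the maximizer $u^{F_j}$ of $x\mapsto x\cdot N^{F_j}$ on $\mathbb I$. Concerning the step you flag as the only obstacle: the paper does not prove it either --- its proof concludes with ``finally $u=u^{F_j}$ as $j$ belongs to $\underline{\partial_{\mathbb I}^{\bot}J}$'', which implicitly takes the depth-maximizing face $F_j$ to lie in $\mathcal F_{\mathbb I}(J)$, whereas $j\in\partial_{\mathbb I}^{\bot}J$ only guarantees $j\in\partial^-_{\mathbb I}F$ for \emph{some} $F\in\mathcal F_{\mathbb I}(J)$, possibly different from $F_j$. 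Your caution is legitimate in general (for an arbitrary polytope $J$ the maximal depth may be attained only at faces outside $\mathcal F_{\mathbb I}(J)$, so the fix of choosing $F_j$ among maximizers inside $\mathcal F_{\mathbb I}(J)$ is not always available), but in the application the paper makes of the lemma --- $J=nT'$, all of whose faces lie in $\mathcal F_{\mathbb I}$ by Lemma \ref{lat} --- the assumption $F_j\in\mathcal F_{\mathbb I}(J)$ is automatic; so the unresolved point is a looseness of the paper's setup rather than an idea your proposal is missing, and apart from it your proof is complete and coincides with the paper's.
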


\begin{proof}
We argue by contradiction : there are $j'\prec j$ and $u\in \mathbb  I $ with $j+ku^{F_j}=j'+ku$. Observe that 
\begin{align*}
d_{F_j}( j+ku^{F_j} )&=d_{F_j}(j+u^{F_j})+(k-1)u^{F_j}\cdot N^{F_j},\\
d_{F_j}( j'+ku )&= d_{F_j}(j'+u)+(k-1)u\cdot N^{F_j}.
\end{align*}
We will show that the equality between these two distances implies $u=u^{F_j}$, therefore $j=j'$.  Indeed we have 
\begin{align*}
 d_{F_j}( j'+u ) &\leq \sup_{v\in \ex(\mathbb I)}  d_{F_j}( j'+v ),    & u\cdot N^{F_j} &\leq  \sup_{v\in \ex(\mathbb I)} v\cdot N^{F_j},\\
                      &\leq   d_{F_{j'}}( j'+u^{F_{j'}} ),                                                         &                        &\leq h_{\mathbb I}(N^{F_j}),\\
 d_{F_j}( j'+u ) &\leq d_{F_j}(j+u^{F_j})                                                                  &        u\cdot N^{F_j}   & \leq u^{F_j}\cdot N^{F_j},
\end{align*}
therefore  $u\cdot N^{F_j}=u^{F_j}\cdot N^{F_j}$, and finally $u=u^{F_j}$ as $j$ belongs to $\underline{\partial_{\mathbb I}^{\bot}J}$.
\end{proof}

For a partition $\mathsf P$ of $X$ and a positive integer $k$, we write  $\mathsf P^k$ to denote the iterated partition $\bigvee_{l=0}^{k-1}f^{-l}\mathsf{P}$ in order to simplify the notations.
\begin{lemma}\label{zer}
Let $J$ be a convex $d$-polytope and let $k,n$ be positive integers.  For any $A^k\in \mathsf P_{J}^k$ and any pattern $P$ on $\underline{\partial^{\bot}_{\mathbb I} J}$, there is  $w\in A^k$ such that  $f^kw$ belongs to $C(P, \underline{\partial^{\bot}_{\mathbb I} J})$.
\end{lemma}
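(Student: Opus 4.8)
The plan is to realize $w$ by starting from an arbitrary $w^{(0)}\in A^{k}$ and then rewriting it at finitely many well-chosen coordinates, one for each lattice point of $\underline{\partial_{\mathbb I}^{\bot}J}$. (The integer $n$ does not enter the statement.) First I would enumerate $\underline{\partial_{\mathbb I}^{\bot}J}=\{j_{1}\prec\cdots\prec j_{m}\}$ in increasing order for $\prec$, and for each $t$ set the pivot $c_{t}:=j_{t}+k\,u^{F_{j_{t}}}$. Since $j_{t}\in\partial_{\mathbb I}^{-}F_{j_{t}}\subset T_{F_{j_{t}}}^{+}J\bigl(-h_{\mathbb I}(N^{F_{j_{t}}})\bigr)$, the set $\partial_{\mathbb I}^{-}F_{j_{t}}$ is nonempty, which forces $h_{\mathbb I}(N^{F_{j_{t}}})>0$ (recall $0\in\mathbb I$); hence $u^{F_{j_{t}}}\in\ex(\mathbb I)\setminus\{0\}$, and by Lemma \ref{pwr} the point $k\,u^{F_{j_{t}}}$ is a nonzero extreme point of $\mathbb I_{k}=k\mathbb I$ at which the (again permutative) automaton $f^{k}$ is permutative.

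The crux is a double separation of the pivot $c_{t}$. On the one hand, testing against $N^{F_{j_{t}}}$ and using $h_{J\oplus(k-1)\mathbb I}=h_{J}+(k-1)h_{\mathbb I}$ together with the strict inequality $j_{t}\cdot N^{F_{j_{t}}}>h_{J}(N^{F_{j_{t}}})-h_{\mathbb I}(N^{F_{j_{t}}})$ yields $c_{t}\cdot N^{F_{j_{t}}}>h_{J}(N^{F_{j_{t}}})+(k-1)h_{\mathbb I}(N^{F_{j_{t}}})$, so $c_{t}\notin J\oplus(k-1)\mathbb I$. As $\underline{J}\oplus I_{l}\subset J\oplus l\mathbb I\subset J\oplus(k-1)\mathbb I$ for $0\le l\le k-1$, the pivot $c_{t}$ lies outside $i+I_{l}$ for every $i\in\underline{J}$ and every $l<k$; hence changing a configuration only at the coordinate $c_{t}$ does not change $f^{l}$ on $\underline{J}$ for $l<k$, i.e. it preserves membership in $A^{k}$. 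On the other hand Lemma \ref{encore} gives $c_{t}=j_{t}+k\,u^{F_{j_{t}}}\notin\{j':j'\prec j_{t}\}\oplus k\mathbb I\supset\{j_{1},\dots,j_{t-1}\}\oplus I_{k}$, so changing a configuration only at $c_{t}$ does not change the value of $f^{k}$ at any $j_{s}$ with $s<t$.

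Given these two facts the induction is routine. Assume $w^{(t-1)}\in A^{k}$ with $(f^{k}w^{(t-1)})_{j_{s}}=P_{j_{s}}$ for all $s<t$; keep $w^{(t-1)}$ fixed at every coordinate except $c_{t}$ and, using permutativity of $f^{k}$ at $k\,u^{F_{j_{t}}}$, choose the value at $c_{t}$ so that the resulting configuration $w^{(t)}$ satisfies $(f^{k}w^{(t)})_{j_{t}}=P_{j_{t}}$. By the first separation fact $w^{(t)}\in A^{k}$, and by the second one $(f^{k}w^{(t)})_{j_{s}}=(f^{k}w^{(t-1)})_{j_{s}}=P_{j_{s}}$ for $s<t$. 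After $m$ steps $w:=w^{(m)}$ lies in $A^{k}$ and satisfies $(f^{k}w)_{j}=P_{j}$ for all $j\in\underline{\partial_{\mathbb I}^{\bot}J}$, that is $f^{k}w\in C(P,\underline{\partial_{\mathbb I}^{\bot}J})$. The only genuine obstacle is the second separation — guaranteeing that the coordinate used to rewrite $f^{k}$ at $j_{t}$ does not disturb the values already fixed at the earlier $j_{s}$ — which is precisely why the lattice points are processed in the order $\prec$ and why Lemma \ref{encore} is formulated as it is; everything else (the local effect of a single-coordinate change, and $\mathbb I_{k}=k\mathbb I$ with $f^{k}$ permutative from Lemma \ref{pwr}) is bookkeeping on top of the earlier lemmas.
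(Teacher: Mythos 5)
Your proof is correct and follows essentially the same route as the paper's: the same $\prec$-ordered induction over $\underline{\partial^{\bot}_{\mathbb I}J}$, rewriting the single coordinate $j+ku^{F_j}$ via permutativity of $f^k$ at $ku^{F_j}$ (Lemma \ref{pwr}) and invoking Lemma \ref{encore} so that the previously fixed values of $f^k$ are undisturbed. The only difference is that you make explicit, via $j+ku^{F_j}\notin J\oplus(k-1)\mathbb I$, why the rewriting preserves membership in the atom $A^k$, a point the paper's proof leaves implicit.
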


\begin{proof}
For any $j\in  \partial^{\bot}_{\mathbb I} J$ we let $P_j$ be the restriction of $P=(p_l)_{l\in \partial^\bot J}$ to $\{j',\, j'\prec j\}$. We show now by induction on $j\in \underline{\partial^\bot J}$ that there is $w\in A^k$ with $f^kw\in 
C(P_j)$.  By Lemma \ref{pwr} the CA $f^k$ is permutative at $ku^{F_j}$ so that we may change the $(j+ku^{F_j})^{\text{th}}$-coordinate of $w$ to get $w'\in X$ with $(f^kw')_j=p_j$. Moreover  the $j'$-coordinates of $f^kw$ for $j'\prec j$ only depends on the coordinates of $w$ on $\{j',\, j'\prec j\}\oplus k\mathbb I$ so that by Lemma \ref{encore} we still have $f^kw'\in C(P_j, \{j',\, j'\prec j\})$, thus  $f^kw'\in C(P_{j''})$ with $j''$ being the successor of $j$ for  $\prec$ in $\underline{\partial^\bot J}$. 
\end{proof}

\begin{lemma}\label{lat}
Let $T'$ and $T'_R$, $R>0$  be the polytopes associated to $\mathbb I$ as defined in  Subsection \ref{gros}. We have 
$$\mathcal F(T')=\mathcal F_{\mathbb I}(T')$$
and $$\forall R>0, \ \mathcal F_1(T'_R)\subset \mathcal F_{\mathbb I}(T'_R).$$
\end{lemma}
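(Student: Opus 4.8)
The plan is to unwind the definitions of $T'$ and $T'_R$ from Subsection \ref{gros}, compute the unit outer normals of their (category‑one) facets, and observe that these normals are exactly the unit vectors $w$ with $R_{\mathbb I}w\in S_{\mathbb I}\cap\mathbb I$; for such normals the uniqueness of the maximizer $u^F$ is then immediate from the Cauchy–Schwarz estimate already used in the proof of Proposition \ref{supe}. Throughout we may assume $x_{\mathbb I}=0$, since translating affects neither the face structure nor whether a given $u^F$ is uniquely defined. First I would record the underlying principle: with $x_{\mathbb I}=0$ one has $u\cdot w\le\|u\|\le R_{\mathbb I}$ for every unit vector $w$ and every $u\in\mathbb I$, so whenever $R_{\mathbb I}w\in\mathbb I$ (equivalently $R_{\mathbb I}w\in S_{\mathbb I}\cap\mathbb I\subset\ex(\mathbb I)$) the maximum $h_{\mathbb I}(w)=R_{\mathbb I}$ is attained only at $u=R_{\mathbb I}w$. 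Hence any facet $F$ of $T'$ or of $T'_R$ whose unit outer normal $N^F$ satisfies $R_{\mathbb I}N^F\in S_{\mathbb I}\cap\mathbb I$ automatically has $u^F=R_{\mathbb I}N^F$ uniquely defined, i.e. $F\in\mathcal F_{\mathbb I}$.

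Next I would treat $T'$. By definition $T'=\bigcap_{v\in\ex(T)}T_v^-$ with $T_v^-=\{x:x\cdot v\le R_{\mathbb I}^2\}$ the inner half‑space tangent to $S_{\mathbb I}$ at $v$ (note $\|v\|=R_{\mathbb I}$ since $v\in S_{\mathbb I}\cap\mathbb I$, and $0\in\Int T_v^-$). Every facet $F$ of $T'$ therefore lies on $\partial T_v^-=\{x:x\cdot v=R_{\mathbb I}^2\}$ for some $v\in\ex(T)$, and since $T'\subset T_v^-$ its unit outer normal is $N^F=v/R_{\mathbb I}$, so $R_{\mathbb I}N^F=v\in S_{\mathbb I}\cap\mathbb I$. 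By the principle above $u^F=v$ is uniquely defined, so $F\in\mathcal F_{\mathbb I}(T')$; as $\mathcal F_{\mathbb I}(T')\subset\mathcal F(T')$ always holds, this gives $\mathcal F(T')=\mathcal F_{\mathbb I}(T')$.

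For $T'_R$ I would apply the isometry $U$ so as to assume $H_i=\{0_i\}\times\mathbb R^{d-i}$ and $x_{\mathbb I}=0$; identifying $\mathbb R^d=\mathbb R^l\times\mathbb R^{d-l}$ we then have $\mathbb I\cap H_l=\{0_l\}\times\mathbb J$ for a convex body $\mathbb J\subset\mathbb R^{d-l}$, with $L,L'$ living in $\mathbb R^{d-l}$ and $T'_R=[-R,R]^l\times L'$. Unwinding the degenerate flag — each generating hyperplane through $x_{\mathbb I}$ meets $S_{\mathbb I}$ in the smallest bounding sphere of the corresponding slice — shows $S_{\mathbb J}$ is the sphere of $\mathbb R^{d-l}$ centred at $0$ of radius $R_{\mathbb I}$, so every $w\in\ex(L)\subset S_{\mathbb J}\cap\mathbb J$ has $\|w\|=R_{\mathbb I}$. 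A category‑one facet of $T'_R$ is $F=[-R,R]^l\times\mathsf F$ with $\mathsf F$ a facet of $L'$; running the argument of the previous paragraph inside $\mathbb R^{d-l}$ (with $L'=\bigcap_{w\in\ex(L)}\{y:y\cdot w\le R_{\mathbb I}^2\}$) gives $\mathsf F\subset\{y:y\cdot w=R_{\mathbb I}^2\}$ for some $w\in\ex(L)$. Hence $F$ lies on $\{x:x\cdot(0_l,w)=R_{\mathbb I}^2\}$ and, since $T'_R$ sits on the side $x\cdot(0_l,w)\le R_{\mathbb I}^2$, has unit outer normal $N^F=(0_l,w)/R_{\mathbb I}$. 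As $R_{\mathbb I}N^F=(0_l,w)\in\mathbb I\cap H_l\subset\mathbb I$ has norm $R_{\mathbb I}$, the principle applies and $u^F=(0_l,w)$ is uniquely defined, so $F\in\mathcal F_{\mathbb I}(T'_R)$, proving $\mathcal F_1(T'_R)\subset\mathcal F_{\mathbb I}(T'_R)$.

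The difficulty I anticipate is organizational rather than conceptual: correctly identifying the outer normals of the facets of the dual polytopes (recognizing $T'$ as essentially the polar dual of $T$, whose facets correspond to the vertices of $T$ lying on $S_{\mathbb I}$), and carrying the reduction of the degenerate case down to $H_l$ while keeping track of the product structure $[-R,R]^l\times L'$ and of the fact that all the spheres involved have the same radius $R_{\mathbb I}$. Once the normals are pinned down, the uniqueness of $u^F$ is immediate. I would also note in passing that the restriction to $\mathcal F_1$ is genuinely needed: a category‑two facet of $T'_R$ has normal $\pm e_i$ with $i\le l$, and $R_{\mathbb I}e_i$ generally fails to lie in $\mathbb I$ because $\mathbb I$ lies on one side of the generating hyperplane $H_1$.
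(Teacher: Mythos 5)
Your proof is correct and takes essentially the same route as the paper: you identify that each facet in question is supported by a hyperplane tangent to the smallest bounding sphere at a point of $\ex(\mathbb I)$ (so $N^F$ points at that tangency point), and deduce uniqueness of $u^F$ from the fact that the tangent hyperplane meets the ball only at that point — your Cauchy–Schwarz step is just the analytic form of the paper's observation $T_F\cap\mathbb I\subset T_F\cap S_{I'}=\{u\}$. Your write-up merely makes explicit (via the product structure $[-R,R]^l\times L'$ and the flag of generating hyperplanes) the tangency assertion that the paper states without elaboration.
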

\begin{proof}
Let $F\in \mathcal F(T')$ or $F\in \mathcal F_1(T'_R)$. Such a face $F$ is tangent to $S_{I'}$ at some $u\in \ex(\mathbb I)$ with $u\cdot N^F=h_\mathbb{I}(N^F)$. Then any $v$ with $v\cdot N^F=h_\mathbb{I}(N^F)$ belongs to $T_F$. But $T_F\cap \mathbb I \subset T_F\cap S_{I'}=\{u\}$, therefore we have necessarily $u_F=u$.

\end{proof}

We are now in a position to prove Proposition \ref{form}. 
\begin{proof}[Proof of Proposition \ref{form}]

 The inequality $h^{d}_{top}(f)\leq R_{I'}\log \sharp \mathcal A$ follows immediately from Proposition \ref{rutop} and Proposition \ref{supe}. 
 By Lemma \ref{zer} we have for any convex $d$-polytope $O$ and any positive integer $n$
 $$\forall A^k\in\mathsf P_{nO}^{k}, \  \sharp\{ A^{k+1}\in \mathsf P_{nO}^{k+1}, \ A^{k+1}\subset A^k\}\geq \sharp\underline{\partial^\bot nO}.$$ 
 Consequently we have 
\begin{align*}
h_{top}(f, \mathsf P_{nO})&\geq \sharp\underline{\partial^\bot nO}\log \sharp \mathcal A,\\
h_{top}^d(f, \mathcal{J}_{O})&\geq  \limsup_n\frac{\sharp\underline{\partial^\bot nO}}{n^{d-1}p(O)}  \log \sharp \mathcal A.
\end{align*}

We first assume that $S_{\mathbb I}=S_{I'}$  is nondegenerated. Let $T'$ be the dual polytope of a generating polytope $T$. Note that $T'$ is a convex body with nonempty interior containing $0$ (but the origin does not lie necessarily in its interior set).  By Lemma \ref{lat}
we have $\mathcal F(T')=\mathcal F_{\mathbb I}(T')$, therefore $\mathcal F(nT')=\mathcal F_{\mathbb I}(nT')$ and $\partial^\bot nT'=\partial^- nT'$ for all $n$.  
Applying then Lemma \ref{grg} we get for some constant $C=C(d)$ :
\begin{align*}
h_{top}^d(f, \mathcal{J}_{T'})&\geq  \limsup_n\frac{\sharp\underline{\partial^- nT'}}{n^{d-1}p(T')}  \log \sharp \mathcal A,\\
&\geq \frac{V_{\mathbb{I}}(T')}{p(T')}  \log \sharp \mathcal A-C.
\end{align*}

Then  it follows from Proposition  \ref{supe} that :   
\begin{align*}
h_{top}^d(f, \mathcal{J}_{T'})& \geq  R_{\mathbb I}  \log \sharp \mathcal A-C.
\end{align*}

 For any positive integer $k$ the above equality also holds for $f^k$ and $\mathbb{I}_k$ in place of $f$ and $\mathbb I$. 

Moreover we have $\mathbb I_k=k\mathbb I$ according to Lemma \ref{pwr}, so that we get together with  the power formula of Lemma \ref{zdz} and $O':=p(T')^{-\frac{1}{d-1}}T'$ :
\begin{align*}
h_{top}^d(f, O')&   =\frac{h_{top}^d(f^k, O')}{k},\\
&\geq \frac{R_{\mathbb{I}_k}}{k} \log \sharp \mathcal A-\frac{C}{k},\\
&\geq \frac{R_{k\mathbb{I}}}{k} \log \sharp \mathcal A-\frac{C}{k},\\
&\geq  R_{\mathbb I}\log \sharp \mathcal A-\frac{C}{k},\\
h_{top}^d(f, T')&\geq R_{I'} \log \sharp \mathcal A.
\end{align*}
This conclude the proof in  the nondegenerated case. 

We deal now with the degenerated case. By Lemma \ref{lat} we have for all $R>0$ with the notations of Subsection \ref{gros} :
\[h_{top}^d(f, \mathcal{J}_{T'_R})\geq  \limsup_n\frac{\sharp\underline{\partial^- nT'_R}-\sum_{F\in \mathcal F_2(T'_R) }\sharp \underline{\partial^- nF } }{p(nT'_R)}  \log \sharp \mathcal A.\]
But for $F\in \mathcal{F}_2(T'_R)$ we have 
\begin{align*}
\sharp \underline{\partial^- nF }&\leq V(\partial^- nF \oplus \mathsf C),\\
&=n^{d-1}\diam(\mathbb I) O(R^{l-1})
\end{align*}
Since $\lim_{R\rightarrow \infty}\frac{p(T'_R)}{R^l}=\mathcal{H}_{d-l}(L')>0$ and $\sharp \mathcal F_2(T'_R)= 2l$, we get 
\[ \limsup_n\frac{\sum_{F\in \mathcal F_2(T'_R) }\sharp \underline{\partial^- nF } }{p(nT'_R)}=\diam(\mathbb I) O(R^{-1}).\]
Together with Proposition \ref{grg} we get for some constant $C=C(d)$ :
\[h_{top}^d(f, \mathcal{J}_{T'_R})\geq \left( V_{\mathbb{I}}(T'_R)-C-\diam(\mathbb I) O(R^{-1})\right)  \log \sharp \mathcal A.\]


We  conclude as in the degenerated case by using the power rule. Fix $\epsilon>0$ and let $k>C\epsilon^{-1}$. We obtain finally 
\begin{align*}
h_{top}^d(f, O'_R)&=\frac{h_{top}^d(f^k, O'_R)}{k},\\
& \geq \left(\frac{V_{\mathbb{I}_k}(T'_R)}{kp(T'_R)}-\epsilon -\frac{\diam(\mathbb I_k)}{k}O(R^{-1})\right)  \log \sharp \mathcal A,\\
& \geq \left(\frac{V_{\mathbb{I}}(T'_R)}{p(T'_R)}-\epsilon -\diam(\mathbb I)O(R^{-1}) \right)  \log \sharp \mathcal A,\\
&\xrightarrow{R\rightarrow +\infty} (R_{I'}-\epsilon)\log \sharp \mathcal A. 
\end{align*}

\end{proof}

 \end{document}